\title[C\MakeLowercase{ontractible flow on} $\Stab$ \MakeLowercase{via} $\gldim$]{Contractible flow of stability conditions via global dimension function}
\author{Yu Qiu}
\address{Qy:
	Yau Mathematical Sciences Center and Department of Mathematical Sciences,
	Tsinghua University, 100084 Beijing, China.
    \&
    Beijing Institute of Mathematical Sciences and Applications, Yanqi Lake, Beijing, China}
\email{yu.qiu@bath.edu}
\tikzset{>=latex}
\tikzset{->-/.style={decoration={  markings,  mark=at position #1 with
    {\arrow{>}}},postaction={decorate}}}
\tikzset{-<-/.style={decoration={  markings,  mark=at position #1 with
    {\arrow{<}}},postaction={decorate}}}
\def\XX{\mathbb{X}}
\def\xx{\mathbf{X}}
\def\ac{\mathbf{A}}
\def\xxc{\mathbf{X}_\mathrm{cd}}
\def\QStab{\operatorname{QStab}}
\newcommand{\nn}{node{$\bullet$}}
\newcommand{\ww}{node[white]{$\bullet$}node{$\circ$}}
\theoremstyle{plain}
\newtheorem{theorem}{Theorem}[section]
\newtheorem*{thm}{Theorem~1}
\newtheorem{lemma}[theorem]{Lemma}
\newtheorem{corollary}[theorem]{Corollary}
\newtheorem{proposition}[theorem]{Proposition}
\newtheorem{conjecture}[theorem]{Conjecture}
\theoremstyle{definition}
\newtheorem{definition}[theorem]{Definition}
\newtheorem{example}[theorem]{Example}
\newtheorem{remark}[theorem]{Remark}
\newtheorem{notations}[theorem]{Notations}
\numberwithin{equation}{section}
\def\hh{\mathcal}
\def\kong{\mathbb}
\def\kk{\mathrm{k}}
\def\<{\langle}
\def\>{\rangle}
\def\ZZ{\mathbb{Z}}
\def\R{\mathbb{R}}
\def\bP{\mathbb{P}}
\def\RR{\R}
\def\bi{\mathbf{i}}
\def\CC{\mathbb{C}}
\def\Aut{\operatorname{Aut}}
\def\Ind{\operatorname{Ind}}
\def\Sim{\operatorname{Sim}}
\def\Hom{\operatorname{Hom}}
\def\Stab{\operatorname{Stab}}
\def\FQuad{\operatorname{FQuad}}
\def\PStab{\operatorname{\mathbb{P}Stab}}
\def\Stap{\operatorname{Stab}^\circ}
\def\diff{\operatorname{d}}
\newcommand{\h}{\hh{H}}            
\renewcommand{\k}{\mathbf{k}}
\renewcommand{\mod}{\operatorname{mod}}
\newcommand{\Cone}{\operatorname{Cone}}
\renewcommand{\Re}{\operatorname{Re}}
\renewcommand{\Im}{\operatorname{Im}}
\newcommand{\id}{\operatorname{id}}
\newcommand{\D}{\operatorname{\hh{D}}}
\newcommand{\per}{\operatorname{per}}
\newcommand{\Tri}{\bigtriangleup}
\def\arrow{red}
\def\surf{\mathbf{S}}                       
\newcommand{\Int}{\operatorname{Int}}
\newcommand\coho[1]{\operatorname{H}^{#1}}
\def\coh{\operatorname{coh}}
\def\T{\kong{T}}
\def\Y{\mathbf{Y}}
\def\P{\mathbf{P}}
\def\jiantou{edge[>=stealth,->]}
\def\gldim{\operatorname{gldim}}
\def\gd{\operatorname{Gd}}
\def\sli{\mathcal{P}}
\def\DQ{\D_\infty(Q)}
\def\DA{\D_\infty(A_n)}
\def\Dpq{\D_\infty(\widetilde{A_{m,r}})}
\def\DS{\D_\infty(\surf^\grad)}
\newcommand{\CP}[1]{\mathfrak{P}(#1)}
\newcommand{\uCP}[1]{\underline{\mathfrak{P}}(#1)}
\def\Poly{\operatorname{Poly}}
\def\grad{\lambda}
\def\num{\mathfrak{N}}
\newcommand{\qv}{\mathbf{q}}
\newcommand{\qInt}{\operatorname{Int}^{\qv}}
\def\VC{\operatorname{\mathbb{V}}}
\def\gms{\surf^\grad}
\begin{document}

\begin{abstract}
We introduce an analytic method that uses the global dimension function $\operatorname{gldim}$ to produce contractible flows on the space $\operatorname{Stab}\mathcal{D}$ of stability conditions on a triangulated category $\mathcal{D}$. In the case when $\mathcal{D}=\mathcal{D}(\mathbf{S}^\lambda)$ is the topological Fukaya category of a graded surface $\mathbf{S}^\lambda$, we show that $\operatorname{gldim}^{-1}(0,y)$ contracts to $\operatorname{gldim}^{-1}(0,x)$ for any $1\le x\le y$, provided $(x,y)$ does not contain `critical' values $\{1+w_\partial/m_\partial \mid w_\partial\ge0, \partial\in\partial\mathbf{S}^\lambda\}$, where the pair $(m_\partial,w_\partial)$ consists of the number $m_\partial$ of marked points and the winding number $w_\partial$ associated to a boundary component $\partial$ of $\mathbf{S}^\lambda$. One consequence is that the global dimension of $\mathcal{D}(\mathbf{S}^\lambda)$ must be one of these critical values.

Besides, we remove the assumptions in Kikuta-Ouchi-Takahashi's classification result on triangulated categories with global dimension less than 1.

    \vskip .3cm
    {\parindent =0pt
    \it Key words:}
    global dimension function, stability conditions, contractible flow, topological Fukaya categories

\end{abstract}
\maketitle

\section{Introduction}
\subsection{Deformation of stability conditions}
The space of stability conditions on a triangulated category, introduced by Bridgeland \cite{B1},
is an interesting homological invariant, which relates representation theory of algebras
and algebraic/symplectic geometry.
Original motivation comes from the study of D-brands in string theory, mirror symmetry,
Donaldson-Thomas theory, etc.
One of the breakthroughs in this direction is the correspondence between
this type of spaces and the moduli spaces of (framed) quadratic differentials,
shown by Bridgeland-Smith \cite{BS} for the Calabi-Yau-3 surface case (cf. \cite{KQ1})
and Haiden-Katzarkov-Kontsevich \cite{HKK} for the Calabi-Yau-$\infty$ surface case.
Aiming to make a precise link between these two works,
we introduce $q$-deformation of categories, stability conditions and quadratic differentials
in the prequels \cite{IQ1,IQ2}.
Namely, for a Calabi-Yau-$\XX$ category $\D_\XX$,
whose Grothendieck group is the $q$-deformation of a rank $n$ lattice,
and any complex number $s$, we identify a subspace $\QStab_s\D_\XX$ (of complex dimension $n$)
in the space $\Stab\D_\XX$ of stability conditions of $\D_\XX$.
We show that one can glue these subspaces $\QStab_s\D_\XX$ under certain conditions
into a complex manifold of dimension $n+1$.
Moreover, $\QStab_s\D_\XX$ can be embedded into the usual spaces of stability conditions
on the corresponding Calabi-Yau-$N$ categories, when $s=N$ is a positive integer.
So the next question is how $\QStab_s\D_\XX$ deforms
when the `Calabi-Yau dimension' $s$ varies,
which will lead to deformation of spaces of stability conditions
along $s$-direction.

From our construction of $\QStab_s\D_\XX$ in \cite{IQ1,IQ2},
the question is closely related to the study of stability condition on Calabi-Yau-$\infty$ categories
(e.g. usual bounded derived categories of algebras or of coherent sheaves on Fano varieties).
One of the key tools here is the global dimension function $\gldim$ (see \cite{Q1,IQ1}).
Our philosophy is that such a function is piecewise Morse and could shed light on
deformation of stability conditions as well as contractibility of spaces of stability conditions
(cf. \cite{FLLQ} for the case of coherent sheaves on the projective plane).

\subsection{Global dimension of triangulated categories}
Global dimension is a classical homological invariant of algebras \cite{A},
or equivalently of their abelian categories.
From 90', triangulated/derived categories become more popular than abelian categories
as they carry more symmetries and are `better' in certain sense.
It is natural to explore the corresponding invariant for triangulated categories
as global dimension for abelian categories.
In \cite{Q1}, we proposed the infimum $\gd\D$ of the global dimension function $\gldim$ on
the space of stability conditions of $\D$ to be a nice candidate as the global dimension for
a triangulated category $\D$ (cf. \cite{Q3}).

In \cite{Q1}, we have shown that
$\gd\DQ=1$ if $Q$ is a non-Dynkin acyclic quiver and
$\gd\DQ=1-2/h_Q$ if $Q$ is a Dynkin quiver,
where $\DQ=\D^b(\mod \kk Q)$, $\kk Q$ is the path algebra of $Q$ and
$h_Q$ the Coxeter number of $Q$ (when $Q$ is a Dynkin quiver).
In \cite{KOT}, Kikuta-Ouchi-Takahashi (KOT) showed that in fact, under some minor condition,
any triangulated category $\D$ with $\gd\D<1$ is equivalent to $\DQ$ for some Dynkin quiver $Q$.
This is a classification theorem of finite type triangulated categories via our global dimension $\gd$,
comparing to the classical version for abelian categories--Gabriel's famous theorem \cite{G}:
\begin{itemize}\item
The module category $\mod \kk Q$ of a quiver $Q$ is of finite type if and only if $Q$ is a Dynkin quiver.
\end{itemize}
In Section~\ref{sec:KOT},
we refine KOT's classification theorem by removing their assumptions,
where the statement becomes (Theorem~\ref{thm:KOT}):
\begin{itemize}\item
any triangulated category $\D$ with $\gd\D<1$ must be of the form
$\DQ/\iota$ for some Dynkin quiver $Q$ and a graph automorphism $\iota$ of $Q$.
\end{itemize}
This is the analogue of Dlab-Ringel's refinement (\cite{DR}) of Gabriel's result.

In Section~\ref{sec:ex}, we calculate global dimensions of graded affine type A quivers
(Theorem~\ref{thm:Apq}) as a first example of non-integer global dimensions
of (non-Calabi-Yau) triangulated categories.

Note that we will actually use this classification result
in the later part of the paper.
\subsection{Test field: topological Fukaya categories}
We mainly focus on topological Fukaya category $\DS$ of a graded marked surface $\gms$ in this paper,
which can be also realized as the bounded derived category of a graded gentle algebra.
There have been a lot of works on this categories, namely,
\begin{itemize}
\item the classification of objects in $\DS$ in \cite{HKK};
\item the description of stability conditions on $\DS$ via quadratic differentials
in \cite{HKK}, cf. \cite{T}.
\item the study of triangle equivalence between different $\DS$ in \cite{LP2};
\item the description on morphisms in $\DS$ in \cite{IQZ} (as a simplified case).
\end{itemize}
Based on these works, we prove the following.
\begin{thm}
Let $\gms$ be a graded marked surface as in Section~\ref{sec:GMS1}.
\begin{itemize}
\item Any stability condition is $\gldim$-reachable (Corollary~\ref{cor:reach2}).
\item If $\gldim\sigma\ge1$, then it equals the maximal angle of the core
of the corresponding quadratic differentials (Proposition~\ref{pp:reach2} and ~\ref{pp:reach1+}).
\item If $1\le\gldim\sigma\notin\VC(\gms)$,
then there is a real submanifold of $\Stap\DS$, open in its closure and, restricted to which,
$\gldim$ is differentiable with no critical point (Theorem~\ref{thm:main}).
\item
If $1\le x<y$ such that $(x,y)\cap\VC(\gms)=\emptyset$,
then $\Stab_{\le y}\DS$ contracts to $\Stab_{\le x}\DS$ (Corollary~\ref{cor:main}).
\item $\gd\DS$ is in $\VC(\gms)$ (Corollary~\ref{cor:gd}).
\end{itemize}
Here $\VC(\gms)=\{1+w_\partial/m_\partial \mid \partial\subset\partial\surf, w_\partial\ge0\}$
is the set of critical values,
where the pair $(m_\partial,w_\partial)$ consist of the number $m_\partial$ of marked points
and the winding number $w_\partial$ associated to a boundary component $\partial$ of $\gms$.
\end{thm}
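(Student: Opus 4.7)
The unifying strategy is to push everything through the Haiden--Katzarkov--Kontsevich correspondence, which identifies $\Stap\DS$ with a space of framed quadratic differentials on $\gms$, and to use the morphism description of Ikeda--Qiu--Zhou to rewrite $\gldim\sigma$ in purely geometric terms. Specifically, I would begin by establishing the first two bullets together: for $\sigma$ corresponding to a quadratic differential $\phi$, the stable objects are represented by saddle connections, Ext-groups correspond to intersection data of geodesic arcs, and the phase difference between two stable objects equals (up to shifting) the angle their representing arcs make. Hence
\[
\gldim\sigma \;=\; \sup\bigl\{\text{angles between interacting geodesics in }\phi\bigr\}.
\]
Once $\gldim\sigma\ge 1$ one argues that the sup is attained on the spine/core of $\phi$ (the finite subgraph carrying the recurrent part of the horizontal foliation), because interacting long-angle arcs must project to the core; this proves bullet (2) and, combined with a compactness argument on the core, gives the reachability statement in bullet (1).

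The heart of the theorem is bullet (3), for which the plan is to build explicit local coordinates on $\Stap\DS$ near a point $\sigma$ with $\gldim\sigma=c\notin\VC(\gms)$. Around the core of $\phi$, I would fix a finite collection of angle-realizing geodesics $\{\gamma_i\}$ whose pairwise angles attain $c$, write $c$ as a max of smooth functions $f_i(\sigma)$ (the individual phase differences), and use the local description $\Stap\DS\simeq\Hom(\Grot,\CC)$ to identify an affine hyperplane on which all $f_i$ coincide. The combinatorial content of $\VC(\gms)=\{1+w_\partial/m_\partial\}$ is exactly the obstruction from boundary monodromy: if all realizing angles on one boundary component $\partial$ forced $c$ to be attained simultaneously, the winding relation would give $c=1+w_\partial/m_\partial$. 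Hence for $c\notin\VC$ the defining system has strictly fewer active constraints than the surface has degrees of freedom, so the level set is a submanifold, and a direction that decreases every $f_i$ simultaneously can be produced; this rules out critical points of $\gldim|_{\Stap\DS}$ in this range and yields the real submanifold of bullet (3).

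Bullets (4) and (5) then fall out. For (4), integrating the nowhere-vanishing decreasing direction from (3) gives a flow on $\Stap\DS$; a standard argument (cut-off near the boundary of the $\gldim\le y$ stratum, plus completeness of the flow which follows from the explicit formula in period coordinates) shows this flow deformation-retracts $\Stab_{\le y}\DS$ onto $\Stab_{\le x}\DS$ whenever $(x,y)\cap\VC(\gms)=\emptyset$. For (5), by the refined Kikuta--Ouchi--Takahashi classification (Theorem~\ref{thm:KOT}) we have $\gd\DS\ge 1$ unless $\DS$ is Dynkin; since in the remaining range $\gldim$ has no critical values outside $\VC(\gms)$, the infimum of $\gldim$ is forced into $\VC(\gms)$. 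The main obstacle I expect is the technical bookkeeping in bullet (3): one must simultaneously track several angle-realizing arcs, show the ``max'' structure reduces to a smooth manifold exactly off $\VC$, and rule out degenerations where new arcs enter the sup during the deformation -- this is where the precise combinatorics of $w_\partial$ and $m_\partial$ must be used, not merely invoked.
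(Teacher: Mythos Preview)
Your overall strategy matches the paper's: translate $\gldim$ to angles via the HKK/IQZ dictionary, identify the angle-realizing pairs, and study the resulting constraint system. However, there is a genuine gap in your treatment of bullet (3). You assert that if all realizing angles sit on one boundary $\partial$ then the winding relation forces $c=1+w_\partial/m_\partial$, and from this you jump to ``for $c\notin\VC$ the defining system has strictly fewer active constraints.'' What is missing is precisely the implication the paper actually needs: the only way the equations $\angle_{p_j}\core(\sigma)=c$ can be dependent is if the realizing arcs form a \emph{cycle} (i.e.\ $\eta_+^{p_j}=\eta_-^{p_{j+1}}$ cyclically), and any such cycle must consist of the minimal arcs around a single boundary component. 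The paper isolates this as the key geometric lemma (Proposition~\ref{pp:min arc}): if two maximal-angle corners share an edge $\eta_+^{p_1}=\eta_-^{p_2}$, that edge is already isotopic to a boundary segment between adjacent marked points on the same component. The proof uses the horizontal strip decomposition in an essential way---geodesics on the outer side of the shared edge cannot hit a zero (since $\eta_\pm$ are extremal), so the strips along $\eta$ must limit to a single infinite-order pole, forcing $p_1,p_2$ to be adjacent on one $\partial$. Without this lemma you cannot pass from ``$c\notin\VC$'' to ``no cycle in the graph $G^\sigma$ of realizing pairs,'' and it is acyclicity of $G^\sigma$ (not merely a count of constraints) that gives linear independence of the differences $Z_j^+-Z_j^-$ in period coordinates and hence the submanifold structure.

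A secondary point concerns bullet (1) in the range $\gldim\sigma\le 1$. The paper does not argue by compactness of the core. For $\gldim\sigma<1$ it invokes the KOT classification (Theorem~\ref{thm:KOT}) to force the $A_n$ disk case, where finiteness gives reachability. For $\gldim\sigma=1$ it exhibits an explicit loop $\gamma_k$ winding $k$ times around a boundary $\partial_0$; the self-intersection index $1+kw$ produces $\Hom(M_k,M_k[1+kw])\ne 0$, and the bound $\gldim=1$ forces $w=0$ and realizes the value $1$. Your ``compactness argument on the core'' would have to be made precise here, since the realizing pair in the $\gldim=1$ case can come from a closed curve (a ring domain) rather than two saddle connections meeting at a marked point.
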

\subsection*{Acknowledgments}
Qy would like to thank Yu Zhou for many helpful discussion
during collaboration on couple of related papers.
This work is supported by
National Key R\&D Program of China (No. 2020YFA0713000),
Beijing Natural Science Foundation (Z180003).

\section{Preliminaries}\label{sec:pre}
\subsection{Global dimension function of stability conditions}\label{sec:gldim}
Following Bridgeland \cite{B1}, we recall the notion of stability conditions
on triangulated categories.

Throughout the paper, $\D$ is a triangulated category with Grothendieck group $K(\D)\cong\ZZ^n$ for some integer $n$.
Denote by $\Ind\D$ the set of (isomorphism classes of) indecomposable objects in $\D$.
Let $\kk$ be an algebraically closed field.

\begin{definition}
\label{def:stab}
A {\it stability condition} $\sigma = (Z, \sli)$ on $\D$ consists of
a group homomorphism $Z \colon K(\D) \to \CC$, called the {\it central charge}, and
a family of full additive subcategories $\sli (\phi) \subset \D$ for $\phi \in \R$,
called the {\it slicing}, satisfying the following conditions:
\begin{itemize}
\item[(a)]
if  $0 \neq E \in \sli(\phi)$,
then $Z(E) = m(E) e^{\bi\pi \phi}$ for some $m(E) \in \R_{>0}$,
\item[(b)]
for all $\phi \in \R$, $\sli(\phi + 1) = \sli(\phi)[1]$,
\item[(c)]if $\phi_1 > \phi_2$ and $A_i \in \sli(\phi_i)\,(i =1,2)$,
then $\Hom(A_1,A_2) = 0$,
\item[(d)]for $0 \neq E \in \D$, there is a finite sequence of real numbers
\begin{equation}\label{eq:>}
\phi_1 > \phi_2 > \cdots > \phi_l
\end{equation}
and a collection of exact triangles (known as the \emph{HN-filtration})
\begin{equation*}
0 =
\xymatrix @C=5mm{
 E_0 \ar[rr]   &&  E_1 \ar[dl] \ar[rr] && E_2 \ar[dl]
 \ar[r] & \dots  \ar[r] & E_{l-1} \ar[rr] && E_l \ar[dl] \\
& A_1 \ar@{-->}[ul] && A_2 \ar@{-->}[ul] &&&& A_l \ar@{-->}[ul]
}
= E
\end{equation*}
with $A_i \in \sli(\phi_i)$ for all $i$.
\end{itemize}
Nonzero objects in $\sli(\phi)$ are called {\it semistable of phase $\phi$} and simple objects
in $\sli(\phi)$ are called {\it stable of phase $\phi$}.
For semistable object $E\in\sli(\phi)$,
denote by $\phi_\sigma(E)=\phi$ its \emph{phase}.
For any object $E$, define its upper/lower phases
\[
    \phi_\sigma^+(E)=\phi_1,\quad \phi_\sigma^-(E)=\phi_l
\]
via the HN-filtration, respectively.
\end{definition}

In this paper, we will always assume that stability condition satisfies the
technical condition, known as the \emph{support property}, see e.g. \cite{IQ1} for more details.
There is a natural $\CC$-action on the set $\Stab(\D)$ of all stability conditions on $\D$, namely:
\[
    s \cdot (Z,\hh{P})=(Z \cdot e^{-\bi \pi s},\hh{P}_{\Re(s)}),
\]
where $\hh{P}_x(\phi)=\hh{P}(\phi+x)$.
Any auto-equivalence $\Phi\in\Aut(\D)$ also acts naturally on $\Stab(\D)$ as
$$\Phi  (Z,\hh{P})=\big(Z \circ \Phi^{-1}, \Phi (\hh{P}) \big).$$

Recall Bridgeland's key result \cite{B1}, that
$\Stab\D$ is a complex manifold with local homeomorphism
\begin{gather}\label{eq:Z}
    \mathcal{Z} \colon \Stab\D \longrightarrow \Hom_{\ZZ}(K(\D),\CC),
        \quad (Z,\sli) \mapsto Z.
\end{gather}

\begin{definition}
Given a slicing $\hh{P}$ on a triangulated category $\D$.
Define the \emph{global dimension} of $\hh{P}$ by
\begin{gather}\label{eq:geq}
\gldim\hh{P}=\sup\{ \phi_2-\phi_1 \mid
    \Hom(\hh{P}(\phi_1),\hh{P}(\phi_2))\neq0\}\in\RR_{\ge0}\cup\{+\infty\}.
\end{gather}
The global dimension of a stability condition $\sigma=(Z,\hh{P})$ is defined to be $\gldim\hh{P}$.
The \emph{global dimensio}n $\gd\D$ of $\D$ is defined as
\[\gd\D\colon=\inf\,\gldim\Stab\D.\]
\end{definition}

We say $\hh{P}$ (or $\sigma)$ is \emph{$\gldim$-reachable} if
there exist $\phi_1$ and $\phi_2$ such that
\[
    \Hom(\hh{P}(\phi_1),\hh{P}(\phi_2))\neq0
    \quad\text{and}\quad \gldim\hh{P}=\phi_2-\phi_1.
\]
We say $\D$ is \emph{$\gldim$-reachable} if there exists $\sigma$ such that
$\gldim\sigma=\gd\D$.
Note that it is possible that $\Stab\D=\emptyset$ and
then $\gd\D$ is not defined.

\begin{example}
By \cite{Q1,KOT}, we have the following:
\begin{itemize}
\item If $\D=\DQ$ is the bounded derived category of the path algebra of an acyclic quiver $Q$,
then $\D$ is $\gldim$-reachable.
\item If $\D=\D^b(\coh X)$ is the bounded derived category of the coherent sheaves on
a smooth projective curve $X$ of genus $g$ (over $\CC$), then $\gd\D=1$ and
\begin{itemize}
  \item $\D$ is $\gldim$-reachable if $g=0,1$;
  \item $\D$ is not $\gldim$-reachable if $g>1$.
\end{itemize}
\end{itemize}
\end{example}

In \cite{IQ1}, we have shown that $\gldim$ is a continuous function on $\Stab\D$,
which is invariant under the $\CC$-action and $\Aut\D$.

\begin{notations}
Let
$$\Stab_{I}\D\colon=\Stab\D\cap\gldim^{-1}(I)$$
for any $I\subset\RR$.
\end{notations}

A stability condition $\sigma$ on $\D$ is {\em totally (semi)stable}
if every indecomposable object is (semi)stable with respect to $\sigma$.
Note that $\Stab_{\le1}\D$ consists of precisely all totally semistable stability conditions,
and $\Stab_{<1}\D$ consists of all totally stable stability conditions which are $\gldim$-reachable.
(\cite[Prop.~3.5]{Q1}).
\subsection{Type A example}\label{sec:An}
Let us describe all totally stable stability conditions for type A quiver and
give explicit formula of global dimension function in such a case.
Denote by $\Poly(n+1)$ the moduli space of convex $(n+1)$-gon $\P\subset\CC$,
where the vertices $\{V_i\in\CC\mid 0\le i\le n\}$ of the polygons are labelled
in anticlockwise order with $V_0=0$ and $V_1=1$.
The local coordinate of a polygon $\mathbf{P}$ in $\Poly(n+1)$ is given by
its vertices $V_i\in\CC$ for $2\le i\le n$.

Consider the $A_n$ quiver with straight orientation
\begin{gather}\label{eq:An}
    Q=A_n\colon 1\leftarrow 2\leftarrow \cdots \leftarrow n.
\end{gather}
Denote by $\D_\infty(A_n)$ its bounded derived category.
By abuse of notation, let $P_j$ be the projective $\kk Q$ module at $j$.
Denote by $M_{ij}=\Cone(P_{i-1} \to P_j)$ for $1\le i\le j\le n$ (where we set $P_0=0$).

\begin{proposition}\label{pp:A}\cite[Prop.~3.6]{Q1}
There is a natural bijection $\mathfrak{Z}\colon\Stab_{<1}\D_\infty(A_n)/\CC\to\Poly(n+1)$,
sending a stability condition $\sigma$ to a $(n+1)$-gon $\mathbf{P}_\sigma$
such that the oriented diagonals $\overrightarrow{V_{i-1}V_j}$ of $\mathbf{P}_\sigma$
gives the central charges $Z(M_{ij})$
of indecomposable objects in $\D_\infty(A_n)$.
\end{proposition}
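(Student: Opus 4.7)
The plan is to construct the map $\mathfrak{Z}$ and its inverse explicitly, and check that they are mutually inverse modulo the $\CC$-action.

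\textbf{Forward direction.} Given $\sigma=(Z,\sli)\in\Stab_{<1}\DA$, I set $V_0:=0$ and $V_j:=Z(P_j)$ for $1\le j\le n$, then use the $\CC$-action, which simultaneously rotates and rescales all central charges through $Z\mapsto Z\cdot e^{-\bi\pi s}$, to normalize $V_1=1$ (possible since $Z(S_1)\ne 0$). The defining triangle $P_{i-1}\to P_j\to M_{ij}$ yields $Z(M_{ij})=V_j-V_{i-1}$, so the oriented diagonal $\overrightarrow{V_{i-1}V_j}$ realizes $Z(M_{ij})$; in particular each edge $\overrightarrow{V_{j-1}V_j}$ equals $Z(S_j)$ for $S_j=M_{jj}$. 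For convexity it suffices to show $\phi(S_1)<\phi(S_2)<\cdots<\phi(S_n)$. The short exact sequence $0\to S_{j-1}\to M_{j-1,j}\to S_j\to 0$ in $\mod\kk Q$ induces non-zero morphisms $S_{j-1}\to M_{j-1,j}\to S_j$, so axiom (c) gives the weak chain $\phi(S_{j-1})\le\phi(M_{j-1,j})\le\phi(S_j)$; if equality held anywhere, all three objects would lie in one slice $\sli(\phi)$, and the injection $S_{j-1}\hookrightarrow M_{j-1,j}$ (still mono in the abelian category $\sli(\phi)$, since $S_{j-1}$ is simple) would destabilize $M_{j-1,j}$, contradicting total stability. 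Strictness thus holds; together with $\gldim\sigma<1$ bounding $\phi(S_n)-\phi(S_1)<1$, the edges $Z(S_j)$ have strictly increasing arguments in an open half-plane after normalization, so $(V_0,\ldots,V_n)$ forms a convex anticlockwise polygon.

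\textbf{Inverse direction.} Given $\mathbf{P}\in\Poly(n+1)$, I define $Z:K(\DA)\to\CC$ on the basis $\{[P_j]\}$ by $Z(P_j):=V_j$, and declare a slicing in which, up to shift, the stable objects are exactly the $M_{ij}$ with phases $\phi_{ij}:=\arg(V_j-V_{i-1})/\pi$. Axiom (a) is built in. Axiom (d) holds because every object of $\DA$ is a direct sum of shifts of the indecomposables $M_{ij}$ (as $\DA$ has finite representation type), so the HN-filtration arises from sorting summands by phase. The support property is automatic since only $\binom{n+1}{2}$ classes are stable. Axiom (c), the $\Hom$-vanishing, I would verify using the standard $\Hom$- and $\Ext^1$-description in $\DA$: a non-zero morphism $M_{i_1j_1}\to M_{i_2j_2}[k]$ exists only for $k\in\{0,1\}$ and only when the two diagonals $V_{i_1-1}V_{j_1}$ and $V_{i_2-1}V_{j_2}$ sit in one of a few explicit nested, endpoint-sharing, or crossing configurations, and in each configuration convexity of $\mathbf{P}$ forces $\phi_{i_1j_1}\le\phi_{i_2j_2}+k$.

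\textbf{Bijectivity and main obstacle.} By construction, the two maps satisfy $V_j=Z(P_j)$ on the nose and are mutually inverse, so $\mathfrak{Z}$ descends to a bijection $\Stab_{<1}\DA/\CC\to\Poly(n+1)$. The main technical hurdle is the geometric case-analysis of axiom (c) in the inverse direction: checking for every possible non-zero $\Hom$ or $\Ext^1$ between indecomposables in $\DA$ that the corresponding configuration of two polygon diagonals is compatible with the required phase inequality. Once this combinatorial step is in hand, the remaining verifications are routine.
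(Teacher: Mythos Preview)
The paper does not prove this proposition at all: it is quoted from \cite[Prop.~3.6]{Q1} with no argument supplied, so there is no in-paper proof to compare against. Your outline is the natural one, and the inverse direction is honestly flagged as incomplete. The forward direction, however, contains a genuine false step.

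You claim that $\gldim\sigma<1$ forces $\phi(S_n)-\phi(S_1)<1$, and use this to put all edge vectors $Z(S_j)$ into an open half-plane. This implication is not valid: there is no non-zero morphism from $S_1$ (or any shift) to $S_n$ witnessing that particular phase gap. Concretely, for $n=3$ take the convex quadrilateral $V_0=0$, $V_1=1$, $V_2=1+i$, $V_3=-\epsilon+(1-\epsilon)i$ with $\epsilon>0$ small. The corresponding totally stable $\sigma$ has $\gldim\sigma\approx\tfrac12$, yet $\phi(S_3)-\phi(S_1)>1$ because $Z(S_3)=V_3-V_2$ lies strictly below the real axis. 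So the edges do \emph{not} fit in a half-plane, and your convexity argument breaks.

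The fix is to check convexity one vertex at a time. At $V_j$ for $1\le j\le n-1$ the exterior angle is $\pi\bigl(\phi(S_{j+1})-\phi(S_j)\bigr)$, which lies in $(0,\pi)$ by the extension $\Hom(S_{j+1},S_j[1])\ne0$ together with $\gldim<1$ and total stability. At $V_n$ the exterior angle is $\pi\bigl(1-(\phi(S_n)-\phi(P_n))\bigr)$, controlled by $\Hom(P_n,S_n)\ne0$; at $V_0$ it is $\pi\bigl(1-(\phi(P_n)-\phi(S_1))\bigr)$, controlled by $\Hom(S_1,P_n)\ne0$. Each of these lies in $(0,\pi)$ by the same mechanism, and the $n+1$ exterior angles sum to $2\pi$ on the nose, which is exactly convexity of the $(n+1)$-gon. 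With this correction your forward map is well-defined; the inverse direction still needs the deferred case analysis (and a coherent choice of real lifts of the $\phi_{ij}$, which you left mod $2$), but those are the routine verifications you anticipated.
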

\begin{figure}[ht]\centering
\begin{tikzpicture}[xscale=.7,yscale=.5]
\path (-1,0) coordinate (t1) node[below]{$V_0$};
\path (1,0) coordinate (t2) node[below]{$V_1$};
\path (3,3) coordinate (t3) node[right]{$V_2$};
\path (3.5,7) coordinate (t4)node[right]{$V_3$};
\path (-1,10) coordinate (t5)node[above]{$V_4$};
\path (-4,5) coordinate (t6)node[left]{$V_5$};
\draw(8,5)node{$Z(M_{ij})=\overrightarrow{V_{i-1}V_j}$};
\draw(8,4)node[below]{$1\le i\le j\le 5$};
\draw[red](8,1)node[below]{$\alpha=\arg\overrightarrow{V_3V_0}-\arg\overrightarrow{V_1V_4}$};
\foreach \j/\i in {1/4,2/5}
{\draw[red] (t\j) to (t\i);}
\foreach \j in {1,2,3,4,5,6}
{\draw (t\j) node {$\bullet$};}
\foreach \j/\i in {1/2,2/3,3/4,4/5,5/6}
{\draw[->,>=stealth,very thick,blue!50] (t\j) to (t\i);}
\draw[dashed,thick,blue!50] (t6) to (t1);
\draw[blue!50]($(t1)!.5!(t2)$) node[below] {$Z_1$};
\draw[blue!50]($(t3)!.5!(t2)$) node[below right] {$Z_2$};
\draw[blue!50]($(t3)!.5!(t4)$) node[right] {$Z_3$};
\draw[blue!50]($(t5)!.5!(t4)$) node[above] {$Z_4$};
\draw[blue!50]($(t5)!.5!(t6)$) node[above left] {$Z_5$};

\draw[red,->,>=stealth] ($(t1)!.3!(t4)$) to[bend left] ($(t2)!.3!(t5)$);
\draw[red]($(t1)!.25!(t5)$) node[right]{$\quad\alpha$};
\end{tikzpicture}
\caption{Convex hexagon for a totally stable stability conditions on $\D_\infty(A_5)$}\label{fig:tt}
\end{figure}
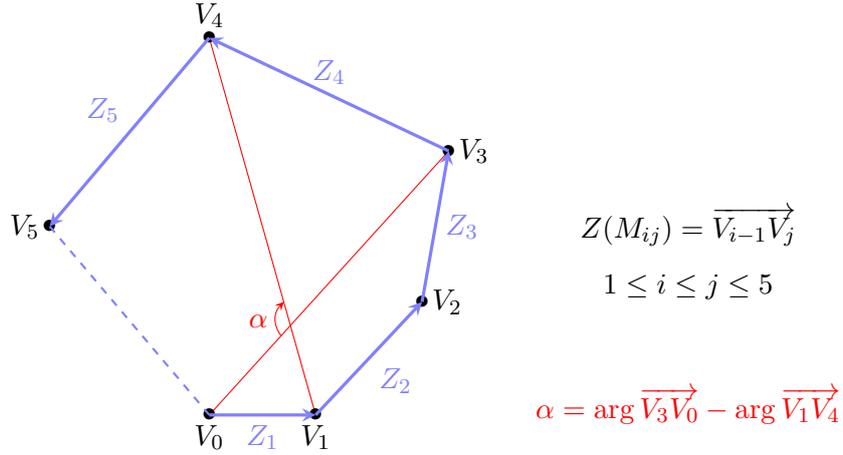

More precisely,
let $\CC\cdot\sigma\in\Stab_{<1}\D(A_n)/\CC$ with representative $\sigma$ such that $Z(P_1)=1$.
Let $\mathbf{P}_\sigma=\mathfrak{Z}(\CC\cdot\sigma)$ be the corresponding $(n+1)$-gon so that
$V_i=Z(P_i)$ for $1\le i\le n$ and then
\begin{gather}\label{eq:gldim a}
    \gldim\sigma=\frac{1}{\pi}\max\{
    \arg\overrightarrow{V_jV_i}-\arg\overrightarrow{V_{i+1}V_{j+1}}
        \mid 0\le i<j\leq n \},
\end{gather}
where $V_{n+1}=V_0$ (cf. Figure~\ref{fig:tt}).

\section{Classification of finite type categories after KOT}\label{sec:KOT}
For an acyclic quiver $Q$, denote by $\DQ$ the bounded derived category of the path algebra $\kk Q$.
Similarly when $Q$ is a specie, cf. \cite{CQ} for details.
Note that any Dynkin specie can be folded from a Dykin quiver.

Let $h_Q$ be the Coxeter number associated to a Dynkin diagram $Q$.
Recall the following, which is a combination of \cite[Thm.~4.7]{Q1} for the quiver case
and \cite[Cor.~6.5]{CQ} for the specie case.

\begin{theorem}\label{thm:Q}
$\gd\DQ=1-2/h_Q$ for a Dynkin quiver or specie $Q$,
where the minimal value of $\gldim$ on $\Stab\DQ$ is
given by the solution of the Gepner equation $\tau\cdot\sigma=(-2/h_Q)\cdot\sigma$.
Moreover, the solution of $\tau\cdot\sigma=(-2/h_Q)\cdot\sigma$ is unique up to $\CC$-action.
\end{theorem}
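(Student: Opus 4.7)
The plan is to combine an Auslander--Reiten telescoping lower bound with an explicit Gepner-type construction, and then reduce the specie case to the quiver case by folding.

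First I would establish $\gldim\sigma\ge 1-2/h_Q$ for every $\sigma\in\Stab\DQ$. The input from Dynkin theory is that the AR translate satisfies $\tau^{h_Q}X\cong X[-2]$ for every indecomposable $X\in\DQ$. The almost split triangle $X\to E\to \tau^{-1}X\to X[1]$ gives a nonzero morphism $\tau^{-1}X\to X[1]$, and axiom~(c) of Definition~\ref{def:stab} applied to the HN-filtrations of its source and target forces a phase inequality of the form
$$\phi(\tau^{-1}X)-\phi(X)\ge 1-\gldim\sigma.$$
Iterating along the AR orbit of $X$ and using $\tau^{-h_Q}X=X[2]$ telescopes to $2\ge h_Q(1-\gldim\sigma)$, proving the bound. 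The cleanest form of the inequality holds on the dense open locus where all indecomposables of $\kk Q$ are semistable, and continuity of $\gldim$ extends it to all of $\Stab\DQ$. Equality throughout forces every phase gap $\phi(\tau^{-1}Y)-\phi(Y)$ to equal $2/h_Q$, which is exactly the Gepner identity $\tau\cdot\sigma=(-2/h_Q)\cdot\sigma$.

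To realize the bound, I would build a Gepner point: the Coxeter element $c=-\tau_*$ acts on $K(\DQ)$ with eigenvalues $e^{2\pi\bi m_j/h_Q}$ indexed by the exponents of $Q$, so one can choose a central charge $Z$ satisfying $Z\circ\tau_*=e^{-2\pi\bi/h_Q}Z$. Declaring the indecomposables of $\kk Q$ to be stable with phases dictated by the Gepner identity then defines a slicing $\sli$; the HN-property follows from the Dynkin classification of indecomposables, and the support property is automatic in finite rank. By the previous paragraph, this $\sigma$ achieves $\gldim\sigma=1-2/h_Q$, so $\gd\DQ=1-2/h_Q$ and the infimum is realized precisely at solutions of the Gepner equation.

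For uniqueness modulo $\CC$, any Gepner solution $\sigma$ is $\tau$-equivariant with the fixed phase shift $-2/h_Q$, hence its slicing on every $\tau$-orbit is determined by a single representative; the saturated almost split triangles from the lower-bound step further pin down the relative phases between distinct $\tau$-orbits, leaving only an overall rotation, which is absorbed by $\CC$. The specie case reduces to the quiver case by folding: a Dynkin specie $Q$ arises as the $\iota$-quotient of a simply-laced Dynkin quiver $\widetilde{Q}$ via a graph automorphism $\iota$ with $h_Q=h_{\widetilde{Q}}$, and the stability conditions on $\DQ$ correspond to $\iota$-equivariant ones on $\D_\infty(\widetilde{Q})$, so the bound, the Gepner construction and the uniqueness all descend. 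The main obstacle I anticipate is the construction-and-verification step: turning the Coxeter-eigenanalysis central charge into a bona fide stability condition requires checking HN-filtrations and compatibility with the prescribed stables, and the specie version demands careful tracking of $\iota$-equivariance; the lower bound itself reduces to a clean AR telescoping argument once the dense semistability locus is identified.
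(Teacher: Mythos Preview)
The paper does not give a proof of this theorem at all: it is stated as a recalled result, explicitly attributed to \cite[Thm.~4.7]{Q1} for the quiver case and \cite[Cor.~6.5]{CQ} for the specie case. So there is nothing in the present paper to compare your argument against, and your proposal is in effect a reconstruction of those external proofs rather than of anything written here.

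That said, your outline is broadly the right shape and matches what one finds in those references: the AR-telescoping lower bound via $\tau^{h_Q}\cong[-2]$ and the almost split morphism $\tau^{-1}X\to X[1]$, the Coxeter-eigenvector construction of the Gepner point, and the folding reduction for species are all standard ingredients. One small simplification: your appeal to a ``dense totally semistable locus'' plus continuity is unnecessary and in fact not quite correct as stated (that locus is $\Stab_{\le1}$, which is not dense). Instead you can argue by cases: if $\gldim\sigma\ge1$ the bound $\gldim\sigma\ge 1-2/h_Q$ is trivial, while if $\gldim\sigma<1$ then by \cite[Prop.~3.5]{Q1} (recalled in Section~\ref{sec:gldim}) $\sigma$ is already totally stable, so every indecomposable is genuinely stable and your telescoping inequality applies directly without any limiting argument. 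The remaining steps (verifying the Gepner central charge extends to a stability condition, and the uniqueness argument) are where the real work in \cite{Q1,CQ} lies, as you correctly anticipate.
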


If $\gd\D<1$, we have the classification theorem for $\D$ (Theorem~\ref{thm:KOT} below).
This is essentially due to Kikuta-Ouchi-Takahashi \cite[Theorem~5.12]{KOT},
where we are going to remove the assumption there:
\begin{itemize}
\item the category $\D$ is the perfect derived category $\per A$ of
some smooth proper differential graded (dg) $\CC$-algebra $A$.
\end{itemize}

Recall the following notions.
\begin{itemize}
\item An object $E$ in $\D$ is \emph{exceptional} if $\Hom^\bullet(E, E)=\kk$.
\item An \emph{exceptional sequence} $\<E_1,\ldots, E_m\>$ in $\D$ is a collection of exceptional objects
such that $\Hom^\bullet(E_i, E_j)=0$ for any $i>j$.
\item An exceptional sequence is \emph{strong} if in addition that $\Hom^k(E_i,E_j)=0$ for any $i,j$ and $k\ne0$.
\item An exceptional sequence is \emph{full}
    if the smallest full triangulated subcategory of $\D$ containing $\{E_i\}$ coincides with $\D$.
\end{itemize}

\begin{theorem}\label{thm:KOT}
Let $\D$ be a connected triangulated category.
Then $\gd\D<1$ if and only if $\D=\DQ/\iota$ for some Dynkin quiver $Q$
and some $\iota\in\Aut\DQ$ induced from some graph automorphism of $Q$.
\end{theorem}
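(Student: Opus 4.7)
The plan is to handle the two directions separately, treating the converse (the easy direction) first and then explaining how to bypass KOT's smooth–proper dg assumption in the forward direction.

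For the direction $\D=\DQ/\iota\Rightarrow\gd\D<1$, I would argue by folding. A graph automorphism $\iota$ of a Dynkin quiver $Q$ determines by folding a Dynkin specie $Q'$, and standard considerations identify $\D(Q')\simeq\DQ/\iota$ at the triangulated level. The specie case of Theorem~\ref{thm:Q} (the \cite{CQ} input) then gives $\gd\D(Q')=1-2/h_{Q'}<1$, which transports to $\gd\D<1$.

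For the direction $\gd\D<1\Rightarrow\D=\DQ/\iota$, the strategy is to extract the required combinatorial data directly from a totally stable $\gldim$-reachable stability condition, rather than assuming $\D=\per A$ at the outset. Choose $\sigma=(Z,\sli)\in\Stab_{<1}\D$, which exists since $\Stab_{<1}\D$ already consists of totally stable $\gldim$-reachable stability conditions by \cite[Prop.~3.5]{Q1}. Because $K(\D)\cong\ZZ^n$ has finite rank and $\gldim\sigma<1$, the $\sigma$-stable objects up to shift form a finite set $\{S_1,\ldots,S_n\}$ whose classes freely generate $K(\D)$. The inequality $\gldim\sigma<1$ forces $\Hom^{k}(S_i,S_j)=0$ for $k\neq 0$ across distinct phases, and simplicity in the heart $\sli(\phi_0,\phi_0+1]$ kills the same-phase Homs; ordering $\{S_i\}$ by decreasing phase therefore produces a strong full exceptional sequence in $\D$.

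The main step is then to upgrade this sequence into a triangulated equivalence $\D\simeq\D^b(\mod A)$, where $A=\End_\D\!\bigl(\bigoplus_i S_i\bigr)$ is a finite-dimensional $\kk$-algebra of global dimension $\le 1$, i.e.\ hereditary; existence of HN-filtrations gives generation, the exceptional structure gives tilting, and then uniqueness of enhancement for bounded derived categories of hereditary finite-dimensional algebras (Lunts–Orlov) promotes this to a triangulated equivalence without needing a dg enhancement on $\D$ as input. Since $A$ is hereditary and $\D$ is connected, $A=\kk Q'$ for a connected specie $Q'$, and the finite-type Gabriel/Dlab–Ringel classification, combined with the argument of Theorem~\ref{thm:Q} rerun on $\D$, forces $Q'$ to be Dynkin. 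Expressing the Dynkin specie $Q'$ as the fold of a Dynkin quiver $Q$ by a graph automorphism $\iota$, in the sense of Dlab–Ringel \cite{DR}, yields $\D\simeq\D(Q')\simeq\DQ/\iota$.

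The main obstacle I expect is precisely the dg-enhancement step: KOT arrange the enhancement hypothesis into their setup, so to genuinely remove it one must argue that the exceptional sequence provides enough rigidity on its own. The enhancement-uniqueness result for bounded derived categories of hereditary algebras is the cleanest way in, but checking that the category produced from $\{S_i\}$ really captures all of $\D$ — rather than a thick subcategory — requires carefully combining $\gldim$-reachability of $\sigma$ with totally stability to ensure every indecomposable of $\D$ appears, up to shift, among the $\sigma$-stables and hence in $\add\bigoplus S_i$ after shifting.
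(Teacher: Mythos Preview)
Your proposal has a genuine gap at its most crucial step. You assert that because $K(\D)\cong\ZZ^n$ and $\gldim\sigma<1$, the $\sigma$-stable objects up to shift form a finite set $\{S_1,\ldots,S_n\}$ whose classes freely generate $K(\D)$. Neither the finiteness nor the count follows from these hypotheses. Recall that $\sigma$ is \emph{totally} stable, so every indecomposable of $\D$ is stable; even in the $A_n$ case there are $n(n+1)/2$ indecomposables up to shift, not $n$, and their classes do not freely generate $K(\D)$. More importantly, the finiteness of $\Ind\D/[1]$ is \emph{precisely} the content one must establish, and it is exactly what the paper's proof is designed to do. The paper's key idea is: assume the set $S(0,1]=\{\phi\in(0,1]\mid\sli(\phi)\ne0\}$ is infinite, pick a monotone sequence of phases accumulating to some $\phi$ inside an interval of length $<1-\gldim\sigma$, and observe that the corresponding objects $E_1,\ldots,E_m$ form a strong exceptional sequence for every $m$. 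The thick subcategory $\D^{(m)}$ they generate then falls under the already-handled case and is Dynkin of rank $m$, forcing $\gldim\sigma\geq\gldim\sigma|_{\D^{(m)}}\geq 1-2/h_{Q^{(m)}}\to 1$ as $m\to\infty$, a contradiction. You have no analogue of this argument.

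A second gap is the Lunts--Orlov step. Uniqueness of enhancement tells you two \emph{enhanced} triangulated categories with equivalent enhancements are triangle equivalent; it does not manufacture an equivalence from a bare triangulated $\D$ (with no enhancement assumed) to $\D^b(\mod A)$. You yourself flag this as the main obstacle and do not resolve it. The paper sidesteps the enhancement issue entirely by a different route: once $S(0,1]$ is finite one deduces that $\D$ is locally finite, and then the Xiao--Zhu classification \cite{XZ} of locally finite triangulated categories via Auslander--Reiten theory identifies the AR quiver as $\ZZ Q/\iota$ for some Dynkin $Q$. A short argument using the slicing (if $\iota^r=[N]$ then $\sli(\ge0)=\sli(\ge N)$, a contradiction) rules out $\iota$ involving a nontrivial shift, forcing $\iota$ to come from a graph automorphism of $Q$. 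No tilting, no enhancement, no Lunts--Orlov is needed.
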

\begin{proof}
By Theorem~\ref{thm:Q}, we only need to show that when $\D$
admits a stability condition $\sigma=(Z,\sli)$ with $\gldim\sigma<1$, then $\D$ must be of Dynkin type as stated.

First we remove the condition that the category $\D$ is over $\CC$
but still assuming it is the perfect derived category $\per A$ of
some smooth proper differential graded (dg) $\kk'$-algebra $A$ over some field $\kk'$
(which is not necessarily algebraically closed).
Then applying the argument in \cite[\S~5.1]{KOT}, we deduce that
$\D$ is locally finite.
By \cite{XZ}, the Auslander Reiten quiver of such a locally finite triangulated category $\D$ is
isomorphic to the orbit $\ZZ Q/\iota$,
where $\ZZ Q$ is the translation quiver of some Dynkin quiver $Q$ and
$\iota$ is an automorphism of $\ZZ Q$.
Note that $\Aut\ZZ Q$ is generated by $[1], \tau$ and graph automorphisms (if exists) of $Q$.
If $\iota^r=[N]$ for some $N\in\ZZ_{>0}$ and $r\in\ZZ$,
then
\[\sli(\ge0)=\sli(\ge0)[N]=\sli(\ge N)\subset\sli(\ge 1)\subset\sli(\ge0).\]
Thus $\sli(\ge0)=\sli(\ge1)$ or $\sli=\sli[1],$ which is a contradiction.
Therefore $\ZZ \iota\cap \ZZ[1]=\emptyset$.
Noticing that $\tau^h_Q=[-2]$, we deduce that $\iota$ can only be an automorphism of/induced by $Q$.
So $\D$ must be of the form $\DQ/\iota$ as required (cf. \cite[Example~1.1]{CQ}).

Next, let us remove all constrains, only assuming that $\D$ is some triangulated category.
We still follow \cite[\S~5.1]{KOT}.
Let $$S(I)\colon=\{\phi\in I\mid \sli(\phi)\ne0\}.$$
If $S(0,1]$ is an infinite set, then
we can take a monotone increasing (similar for decreasing) sequence
$$\phi-\epsilon<\phi_1<\phi_2<\cdots<\phi_m<\cdots<\phi$$
such that $\lim_{m\to\infty} \phi_k=\phi$ and $0<\epsilon<1-\gldim\sigma$.
Let $E_k\in\hh{P}(\phi_k)$.
For any integer $i,j,m\ge1$, we have
\[\begin{cases}
    \phi_i+m>\phi_j,\\
    \gldim\sigma<1-\epsilon<(\phi_j+m)-\phi_i,
\end{cases}\]
which implies
\[\begin{cases}
    \Hom(E_i[m],E_j)=0,\\
    \Hom(E_i,E_j[m])=0,
\end{cases}\]
i.e. $\Hom^\bullet(E_i,E_j)=\Hom(E_i,E_j)$.
If in addition $i>j$, we also have $\Hom(E_i,E_j)=0$.
So $\<E_1,\ldots,E_m\>$ is a full strongly exceptional sequence
in the full thick subcategory $\D^{(m)}\subset\D$ they generated.
This subcategory falls into the case above, i.e. $\D^{(m)}$ is of the form $\D_\infty(Q^{(m)})/\iota$,
where $m$ is the rank/number of vertices of some Dynkin quiver $Q^{(m)}$.
Restricted $\sigma$ to $\D^{(m)}$, we have (\cite[Prop.~5.2]{KOT})
\[
    \gldim\sigma\geq\gldim\sigma|_{\D^{(m)}}\geq 1-2/h_{Q^{(m)}}.
\]
But \[\lim_{m\to\infty} 1-2/h_{Q^{(m)}} = 1,\]
which contradicts to $\gldim\sigma<1$.
Thus $S(0,1]$ is a finite set.

Then we deduce $\D$ is locally finite as in \cite[\S~5.1]{KOT}
and finish the proof as the previous case.
\end{proof}

\section{Topological Fukaya categories}\label{sec:GMS}
\subsection{Graded marked surface}\label{sec:GMS1}
\def\MTS{\mathbb{R}T\surf}
\def\PTS{\mathbb{P}T\surf}
In this subsection, we partially follow \cite[\S~2]{IQZ}, cf. \cite{HKK,LP2}.
A \emph{graded marked surface} $\gms=(\surf,\Y,\grad)$ consists :
\begin{itemize}
\item a smooth oriented surface $\surf$;
\item a set of closed marked points $\Y$ in $\partial\surf$  (cf. \cite[\S~6.2]{IQZ}),
such that $\Y\cap\partial_i\neq\emptyset$ for each boundary component $\partial_i$ of $\partial\surf$.
\item a \emph{grading/foliation} $\grad$ on $\surf$, that is, a section of the projectivized tangent bundle $\PTS$.
\end{itemize}
Let $b=|\partial\surf|$ and $\aleph=|\Y|$.
Then $\partial\surf$ is divided into $\aleph$ many boundary arcs.
The rank of $\surf$ is
\begin{gather}\label{eq:rank}
    n=2g+b+\aleph-2.
\end{gather}
We will require $n\ge2$ to exclude the trivial case.
Denote by $\surf^\circ\colon=\surf\setminus\partial\surf$ its interior.

Note that the projection $\PTS\to\surf$ with $\R\mathbb{P}^1\simeq \mathrm{S}^1$-fiber leads to
a short exact sequence (cf. \cite[\S~2.1]{IQZ})
\[
    0\to \coho{1}(\surf) \to \coho{1}(\PTS) \xrightarrow{\pi_\surf} \coho{1}(\mathrm{S}^1)=\ZZ \to0.
\]
In fact, \cite[Lem.~1.2]{LP2} shows that $\grad$ is determined by a class in $\coho{1}(\PTS)$, denoted by $[\grad]$.
Moreover, such a data $\grad$ is equivalent to a $\ZZ$-covering
$$\operatorname{cov}\colon \MTS^\grad\to\PTS,$$
known as the \emph{Maslov covering},
where $\MTS^\grad$ is the $\R$-bundle of $\surf$ that can be constructed via gluing $\ZZ$ copies of $\PTS$ cut by $\grad$.

A morphism $(f,\widetilde{d f})\colon \gms \to \surf_1^{\grad_1}$ between two graded marked surfaces is
a map $f\colon\surf\to\surf_1$ such that it preserves the marked points and $[\grad]=f^*[\grad_1]$,
regarding $[\grad]\in \coho{1}(\PTS)$, together with
a map $\widetilde{d f}\colon\MTS^\grad \to \mathbb{R}T\surf_1^{\grad_1}$ that fits into the commutative diagram
\[\xymatrix{
    \MTS^\grad \ar[r]^{\widetilde{d f}} \ar[d]_{\operatorname{cov}} & \mathbb{R}T\surf_1^{\grad_1}\ar[d]^{\operatorname{cov}_1}\\
    \PTS \ar[r]^{d f } & \mathbb{P}T\surf_1.
}\]
There is a natural automorphism $[1]$, known as the grading shift on $\gms$,
given by the deck transformation of $\MTS^\grad\to\PTS$,
or equivalently, by rotating $\grad\colon\surf\to\mathbb{P}^1$ by $\pi$ clockwise.

For a curve $c:[0,1]\to \surf$, we always assume $c(t)\in\surf^\circ$ for any $t\in(0,1)$.
A \emph{graded curve} $\widetilde{c}$ is a lift of the tangent $\dot{c}$ of $c$ in $\MTS$,
of an usual curve $c$ on $\surf$.
There are exactly $\ZZ$ lifts of $\dot{c}$ on $\MTS^\grad$,
related by the grading shift $[1]$ (i.e. the deck transformation of $\MTS^\grad$).
This definition of graded curves is taken from \cite{IQZ}, see \cite{HKK} for alternative/original version.
For any graded curves $\widetilde{c}_1$ and $\widetilde{c}_2$,
let $p=c_1(t_1)=c_2(t_2)\in\surf\setminus(\partial\surf\cup\Tri)$ be an intersection of $c_1$ and $c_2$.
Note that we always require that any curves intersect transversally.
The \emph{intersection index} $i=i_p(\widetilde{c}_1,\widetilde{c}_2)$ from $\widetilde{c}_1$ to $\widetilde{c}_2$ at $p$
is the shift $[i]$ such that the lift $\widetilde{c}_2[i]\mid_p$ of $\widetilde{c}_2[i]$ at $p$ is in the interval
$$( \widetilde{c}_1\mid_p , \widetilde{c}_1[1]\mid_p )\subset \mathbb{R}T_p\surf \cong \R .$$
Note that when the index 0 intersection $p$ from $\widetilde{c}_1$ to $\widetilde{c}_2$
can be viewed as a sharp angle from the tangent direction of $\widetilde{c}_1$ at $p$ to
the one of $\widetilde{c}_2$, where sharp means that it is less than $\pi$ in $\mathbb{P}T_p\surf$.
Further details see \cite[\S~2.4]{IQZ} and cf. Figure~\ref{fig:angle}.

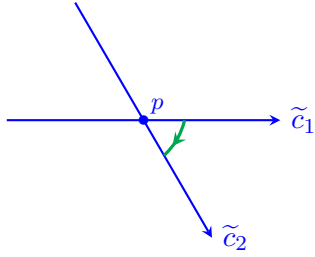
\begin{figure}[h]\centering
\begin{tikzpicture}[scale=.6]
    \foreach \j in {3,2}{
    \draw[blue,thick] (60*\j:3)\jiantou(60*\j+180:3);}
    \draw[blue,thick](0:3)node[right]{$\widetilde{c}_1$}(-60:3)node[right]{$\widetilde{c}_2$}
        (0.3,0.3)node{\footnotesize{$p$}} (0,0)node{\footnotesize{$\bullet$}};
    \draw[Green,very thick,->-=.7,>=stealth]
        (0:.9)to[bend left=15](-60:.9);
\end{tikzpicture}
\caption{Intersection index (as angle)}\label{fig:angle}
\end{figure}
	
Let $\Int^\rho(\widetilde{c}_1,\widetilde{c}_2)$ be the number of intersections
between $\widetilde{c}_1$ and $\widetilde{c}_1$ with index $\rho$ in $\surf$.
Denote by
\[\qInt(\widetilde{c}_1,\widetilde{c}_2)=\sum_{\rho\in\mathbb{Z}}
    \qv^\rho\cdot\Int^\rho((\widetilde{c}_1,\widetilde{c}_2)\]
the number of $q$-intersections between $\widetilde{c}_1$ and $\widetilde{c}_1$.
Note that $\qInt$ becomes the usual geometric intersection number when specializing $\qv=1$.

Let $\DS$ be the topological Fukaya category associated to $\gms$
and
$$\dim_\qv\Hom^\bullet(X,Y)=\sum_{d\in\ZZ} \qv^d\cdot\Hom^d(X,Y).$$
Recall the following result about $\DS$, where the first part is due to \cite{HKK}
and the second part (on morphisms) is due to \cite{IQZ}.

\begin{theorem}\cite{HKK,IQZ}\label{thm:IQZ}
There is a bijection $X$ between
the set of isotopy classes of graded curves $\{\widetilde{\eta}\}$ on $\gms$ with local system
and the set of isomorphism classes of indecomposable objects $\{X_{\widetilde \eta}\}$ in $\DS$.
Furthermore, let $\widetilde{\alpha},\widetilde{\beta}$ be two graded curves which are not closed curves (and hence no local system is needed).
Then each index $\rho$ intersection between them induces a (non-trivial) morphism in
$\Hom^\rho(X_{\widetilde \alpha}, X_{\widetilde \beta})$.
Moreover, these morphisms form a basis for the $\Hom^\bullet$ space so that we have
\begin{gather}\label{eq:int}
    \dim_\qv\Hom^\bullet(X_{\widetilde \alpha}, X_{\widetilde \beta})=\qInt(\widetilde{\alpha},\widetilde{\beta}).
\end{gather}
\end{theorem}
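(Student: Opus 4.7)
The plan is to reduce \textbf{both} parts of the theorem to statements about a graded gentle algebra by realizing $\DS$ as the perfect derived category of one, via an appropriate choice of formal generator on $\gms$.

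First I would fix a \emph{full formal graded arc system} $\Delta=\{\widetilde\eta_i\}_{i=1}^{n}$ on $\gms$: a maximal collection of pairwise disjoint, pairwise non-isotopic graded arcs connecting marked points that cuts $\surf$ into polygons, each containing exactly one unmarked boundary segment, as in \cite{HKK}. The direct sum $G=\bigoplus_i X_{\widetilde\eta_i}$ split-generates $\DS$, and its dg endomorphism algebra $A_\Delta=\RHom(G,G)$ is, up to quasi-isomorphism, a \emph{graded gentle dg algebra}: its quiver records how the $\widetilde\eta_i$ meet at marked points, the relations are gentle and read off from the polygon structure, and the internal gradings are determined by the foliation $\grad$ via the Maslov cover $\MTS^\grad\to\PTS$. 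This gives an equivalence $\DS\simeq\per A_\Delta$. The bijection on indecomposables then reduces to the string--band classification of perfect complexes over graded gentle algebras (Bekkert--Merklen and the graded analogue used in \cite{HKK,LP2}): indecomposable objects are either string complexes or band complexes, and the dictionary ``walks in the quiver with gentle relations $\leftrightarrow$ homotopy classes of immersed curves transverse to $\Delta$'' identifies string complexes with graded open curves and band complexes with graded closed curves with an indecomposable local system on $\kk[t,t^{-1}]$. Independence of the auxiliary $\Delta$ follows because any two full formal arc systems are connected by graded arc flips, which induce derived equivalences.

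For the morphism formula I would follow the strategy of \cite{IQZ}. Put $\widetilde\alpha,\widetilde\beta$ in minimal position (using an auxiliary metric compatible with $\grad$). At each transverse intersection $p$, locally pictured as two arcs crossing a single $\Delta$-polygon, there is a standard ``local morphism'' $\varphi_p\colon X_{\widetilde\alpha}\to X_{\widetilde\beta}$ obtained from an elementary string map in $A_\Delta$. Its cohomological degree is computed by lifting the tangent frames at $p$ to $\MTS^\grad$ and reading off the winding between them, which by construction equals the intersection index $i_p(\widetilde\alpha,\widetilde\beta)$. The boundary contributions at common marked endpoints are handled by the analogous angle count on $\partial\surf$. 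Linear independence of $\{\varphi_p\}$ in $\Hom^\bullet(X_{\widetilde\alpha},X_{\widetilde\beta})$ is easy: distinct intersections live in disjoint $\Delta$-polygons, so their images in the explicit string/graph-map basis over $A_\Delta$ are linearly independent.

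The main obstacle, and the heart of the argument, is \emph{completeness} of $\{\varphi_p\}$: every element of the standard graph-map/overlap basis of $\Hom^\bullet$ between two string (or band) complexes over $A_\Delta$ must be shown to come from some transverse intersection of minimal-position representatives. I would run the dictionary in reverse: every overlap of two strings in the quiver of $A_\Delta$ produces a sub-bigon on $\gms$ whose two ``corners'' are candidate intersection points of $\widetilde\alpha$ and $\widetilde\beta$, and an innermost-bigon argument rules out the case in which neither corner is an actual crossing --- otherwise the representatives could be further simplified, contradicting minimality. Matching graded degrees on both sides uses that the shift of a graph map is precisely the difference of Maslov lifts along the two strings in the overlap. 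Once the enumerations are matched bijectively while preserving the $\qv$-degree, the identity $\dim_{\qv}\Hom^\bullet(X_{\widetilde\alpha},X_{\widetilde\beta})=\qInt(\widetilde\alpha,\widetilde\beta)$ in \eqref{eq:int} follows by summing degree by degree, concluding the proof.
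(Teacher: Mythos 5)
The paper does not actually prove this statement: Theorem~\ref{thm:IQZ} is imported as a black box, with the classification of indecomposables attributed to \cite{HKK} and the intersection--morphism correspondence to \cite{IQZ}, so there is no in-paper argument to compare yours against. What can be said is whether your outline matches the strategy of the cited sources, and it does: realizing $\DS$ as $\per A_\Delta$ for a graded gentle dg algebra attached to a full formal arc system, invoking the string/band classification, and matching a combinatorial basis of $\Hom^\bullet$ with intersections of minimal-position representatives is exactly the route of \cite{HKK,LP2,IQZ} (cf.\ also \cite{OPS}). Two points in your sketch deserve care. First, your justification of linear independence --- that distinct intersections live in disjoint $\Delta$-polygons --- is false in general: two curves in minimal position can cross many times inside a single polygon of the arc system; independence instead follows because distinct intersection points give morphisms supported on distinct overlaps or distinct arrow configurations of the underlying strings, hence distinct elements of the standard basis. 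Second, the standard basis of $\Hom^\bullet$ between string objects over a gentle algebra is not made of overlap (graph) maps alone; it also contains singleton single and double maps and quasi-graph maps, and in the geometric dictionary these account respectively for boundary intersections at marked points and for the second of the two morphisms (in opposite directions, of complementary index) attached to a single interior crossing. Your completeness step must enumerate all of these types, not just overlaps, before the degree-by-degree count yielding \eqref{eq:int} closes.
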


\subsection{Quadratic differentials}
\def\Sing{\operatorname{Sing}}
In this section, we quickly review the theory of
stability conditions as quadratic differentials in the topological Fukaya category setting.

Let $\xx$ be a compact Riemann surface and $\xi$ a non-zero \emph{meromorphic quadratic differential} on $\xx$,
that is, a meromorphic section of the square of the cotangent bundle.
The set of \emph{singularities} of $\xx$ is denoted by $\Sing(\xi)$.
Usually, the singularities considered are zeroes or poles of order $k\ge1$,
i.e. local coordinate can be chosen to be
\[
    z^{\pm k}\diff^{\otimes2}.
\]
The \emph{(horizonal) foliation} $\grad(\xi)$ of $\xi$ gives a line field (see Section~\ref{sec:GMS}) on $\xx$.
In fact, these are certain geodesics on $\xx$, where the metric is induced from $\xi$.

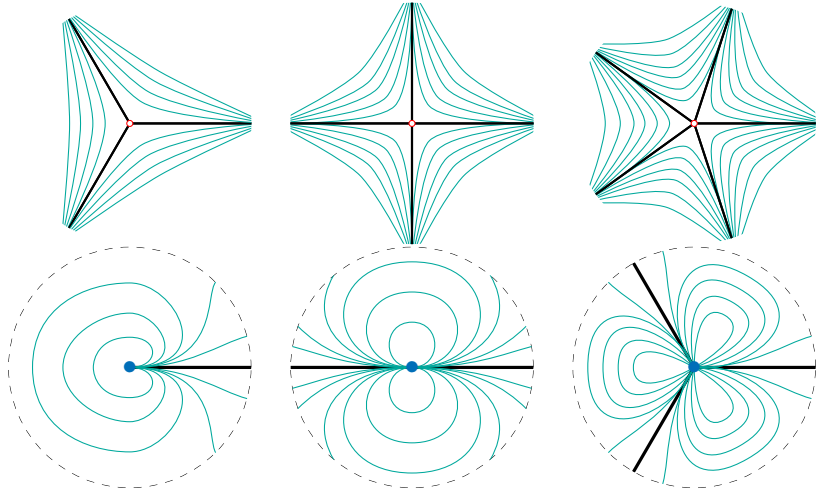
\begin{figure}[ht]\centering
\begin{tikzpicture}[scale=.4] \clip(0,0) circle (4);
\foreach \k in {1,2,0}
{    \path (120*\k:4.5) coordinate (v2)
          (120*\k+120:4.5) coordinate (v1)
          (120*\k+60:2.25) coordinate (v3)
          (0,0) coordinate (v4);
  \draw[thick](v2)to(v4)to(v1);
  \foreach \j in {.36,.54,.72,.88}
    {
      \path (v4)--(v3) coordinate[pos=\j] (m0);
      \draw[Emerald] plot [smooth,tension=.5] coordinates {(v1)(m0)(v2)};
    }
}
\draw[red,fill=white](0,0)circle(.1);
\end{tikzpicture}
\quad
\begin{tikzpicture}[scale=.4] \clip(0,0) circle (4);
\foreach \k in {0,1,2,3}
{    \path (90*\k:4.5) coordinate (v2)
          (90*\k+90:4.5) coordinate (v1)
          (90*\k+45:2.25) coordinate (v3)
          (0,0) coordinate (v4);
  \foreach \j in {.36,.54,.72,.88,1.08}
    {
      \path (v4)--(v3) coordinate[pos=\j] (m0);
      \draw[Emerald] plot [smooth,tension=.5] coordinates {(v1)(m0)(v2)};
    }
  \draw[thick](v2)to(v4)to(v1);
}
\draw[red,fill=white](0,0)circle(.1);
\end{tikzpicture}
\quad
\begin{tikzpicture}[scale=.4] \clip(0,0) circle (4);
\foreach \k in {0,1,2,3,4}
{    \path (72*\k:4.5) coordinate (v2)
          (72*\k+72:4.5) coordinate (v1)
          (72*\k+36:2.25) coordinate (v3)
          (0,0) coordinate (v4);
  \foreach \j in {.36,.54,.72,.88,1.08,1.3}
    {
      \path (v4)--(v3) coordinate[pos=\j] (m0);
      \draw[Emerald] plot [smooth,tension=.5] coordinates {(v1)(m0)(v2)};
    }
  \draw[thick](v2)to(v4)to(v1);
}
\draw[red,fill=white](0,0)circle(.1);
\end{tikzpicture}

\begin{tikzpicture}[scale=.4,rotate=0]\clip(0,0) circle (4);
\draw[dashed, thin](0,0)circle(4);
\draw[very thick](0,0)to(0:4)(-90:4.5);
\draw[Emerald] (0,0)
    .. controls +(0:2) and +(195:1) ..(15:4);
\draw[Emerald] (0,0)
    .. controls +(0:2) and +(165:1) ..(-15:4);
\draw[Emerald] (0,0)
    .. controls +(0:3) and +(225:.5) ..(45:4);
\draw[Emerald] (0,0)
    .. controls +(0:3) and +(125:.5) ..(-45:4);

\draw[Emerald] (0,0)
    .. controls +(0:3) and +(0:1.5) ..(90:2.8)
    .. controls +(180:1.5) and +(90:2) ..(180:3.2)
    .. controls +(-90:2) and +(180:1.5) ..(-90:2.8)
    .. controls +(0:1.5) and +(0:3) .. (0,0);
\draw[Emerald] (0,0)
    .. controls +(0:2) and +(0:1) ..(90:1.8)
    .. controls +(180:1) and +(90:1) ..(180:2.2)
    .. controls +(-90:1) and +(180:1) ..(-90:1.8)
    .. controls +(0:1) and +(0:2) .. (0,0);
\draw[Emerald] (0,0)
    .. controls +(0:1) and +(0:1) ..(90:1)
    .. controls +(180:1) and +(90:.3) ..(180:1.2)
    .. controls +(-90:.3) and +(180:1) ..(-90:1)
    .. controls +(0:1) and +(0:1) .. (0,0);
\draw[NavyBlue](0,0)node{$\bullet$};
\end{tikzpicture}
\quad
\begin{tikzpicture}[scale=.4,rotate=0]\clip(0,0) circle (4);
\draw[dashed, thin](0,0)circle(4);
\draw[very thick](180:4)to(0:4)(-90:4.5);
\draw[Emerald] (0,0)
    .. controls +(0:2) and +(195+30:1) ..(20:4);
\draw[Emerald] (0,0)
    .. controls +(0:2) and +(165-30:1) ..(-20:4);

\draw[Emerald] (0,0)
    .. controls +(0:2) and +(195+105:2) ..(50:4);
\draw[Emerald] (0,0)
    .. controls +(0:2) and +(165-105:2) ..(-50:4);

\draw[Emerald] (0,0)
    .. controls +(180:2) and +(180-195-105:2) ..(180-50:4);
\draw[Emerald] (0,0)
    .. controls +(180:2) and +(180-165+105:2) ..(180--50:4);

\draw[Emerald] (0,0)
    .. controls +(0:2) and +(195:1) ..(10:4);
\draw[Emerald] (0,0)
    .. controls +(0:2) and +(165:1) ..(-10:4);

\draw[Emerald] (0,0)
    .. controls +(180:2) and +(15:1) ..(195-5:4);
\draw[Emerald] (0,0)
    .. controls +(180:2) and +(-15:1) ..(165+5:4);

\draw[Emerald] (0,0)
    .. controls +(180:2) and +(45:1) ..(200:4);
\draw[Emerald] (0,0)
    .. controls +(180:2) and +(-45:1) ..(160:4);

\draw[Emerald] (0,0)
    .. controls +(0:3) and +(0:3) ..(90:3.5)
    .. controls +(180:3) and +(180:3) .. (0,0);
\draw[Emerald] (0,0)
    .. controls +(0:2) and +(0:2) ..(90:2.5)
    .. controls +(180:2) and +(180:2) .. (0,0);
\draw[Emerald] (0,0)
    .. controls +(0:1) and +(0:1) ..(90:1.5)
    .. controls +(180:1) and +(180:1) .. (0,0);
\draw[Emerald] (0,0)
    .. controls +(0:3) and +(0:3) ..(-90:3.5)
    .. controls +(180:3) and +(180:3) .. (0,0);
\draw[Emerald] (0,0)
    .. controls +(0:2) and +(0:2) ..(-90:2.5)
    .. controls +(180:2) and +(180:2) .. (0,0);
\draw[Emerald] (0,0)
    .. controls +(0:1) and +(0:1) ..(-90:1.5)
    .. controls +(180:1) and +(180:1) .. (0,0);

\draw[NavyBlue](0,0)node{$\bullet$};
\end{tikzpicture}
\quad
\begin{tikzpicture}[scale=.4,rotate=0]\clip(0,0) circle (4);
\draw[dashed, thin](0,0)circle(4);
\foreach \j in {0,1,2}
{
  \draw[very thick](120*\j+0:4)to(0,0);
\draw[Emerald] (0,0)
    .. controls +(120*\j+0:2) and +(120*\j+195:1) ..(120*\j+15:4);
\draw[Emerald] (0,0)
    .. controls +(120*\j+0:2) and +(120*\j+165:1) ..(120*\j+-15:4);

\draw[Emerald] (0,0)
    .. controls +(0+120*\j-10:1.5) and +(-30+120*\j:3) ..(60+120*\j:3.5)
    .. controls +(150+120*\j:3) and +(120+120*\j+10:1.5) .. (0,0);

\draw[Emerald] (0,0)
    .. controls +(0+120*\j:1) and +(-30+120*\j:2.5) ..(60+120*\j:3)
    .. controls +(150+120*\j:2.5) and +(120+120*\j:1) .. (0,0);

\draw[Emerald] (0,0)
    .. controls +(0+120*\j+15:.6) and +(-30+120*\j:1.8) ..(60+120*\j:2.5)
    .. controls +(150+120*\j:1.8) and +(120+120*\j-15:.6) .. (0,0);

\draw[Emerald] (0,0)
    .. controls +(0+120*\j+30:.6) and +(-30+120*\j:1) ..(60+120*\j:2)
    .. controls +(150+120*\j:1) and +(120+120*\j-30:.6) .. (0,0);
}
\draw[NavyBlue](0,0)node{$\bullet$};
\end{tikzpicture}
\caption{Foliations/line fields near zeroes/poles}\label{fig:foli}
\end{figure}
For instance,
near a zero of order 1/2/3, the foliation $\grad(\xi)$ is shown in the upper pictures of Figure~\ref{fig:foli};
near a pole of order 3/4/5, the foliation $\grad(\xi)$ is shown in the lower pictures of Figure~\ref{fig:foli}.
When performing \emph{real blow-up} at a (higher order) pole $p$ of order $k\ge3$,
one gets a boundary component $\partial_p$ with $k-2$ marked points,
where points on $\partial_p$ correspond to tangent direction at $p$
and marked points are distinguished directions, as shown (the black lines) in Figure~\ref{fig:foli}.
In the pictures we presented, our convention is the following:
red circles are zeros and blue bullets are poles.
For details, see \cite{BS,KQ1}.

\begin{figure}[hb]\centering
\includegraphics[height=2in,width=2.5in]{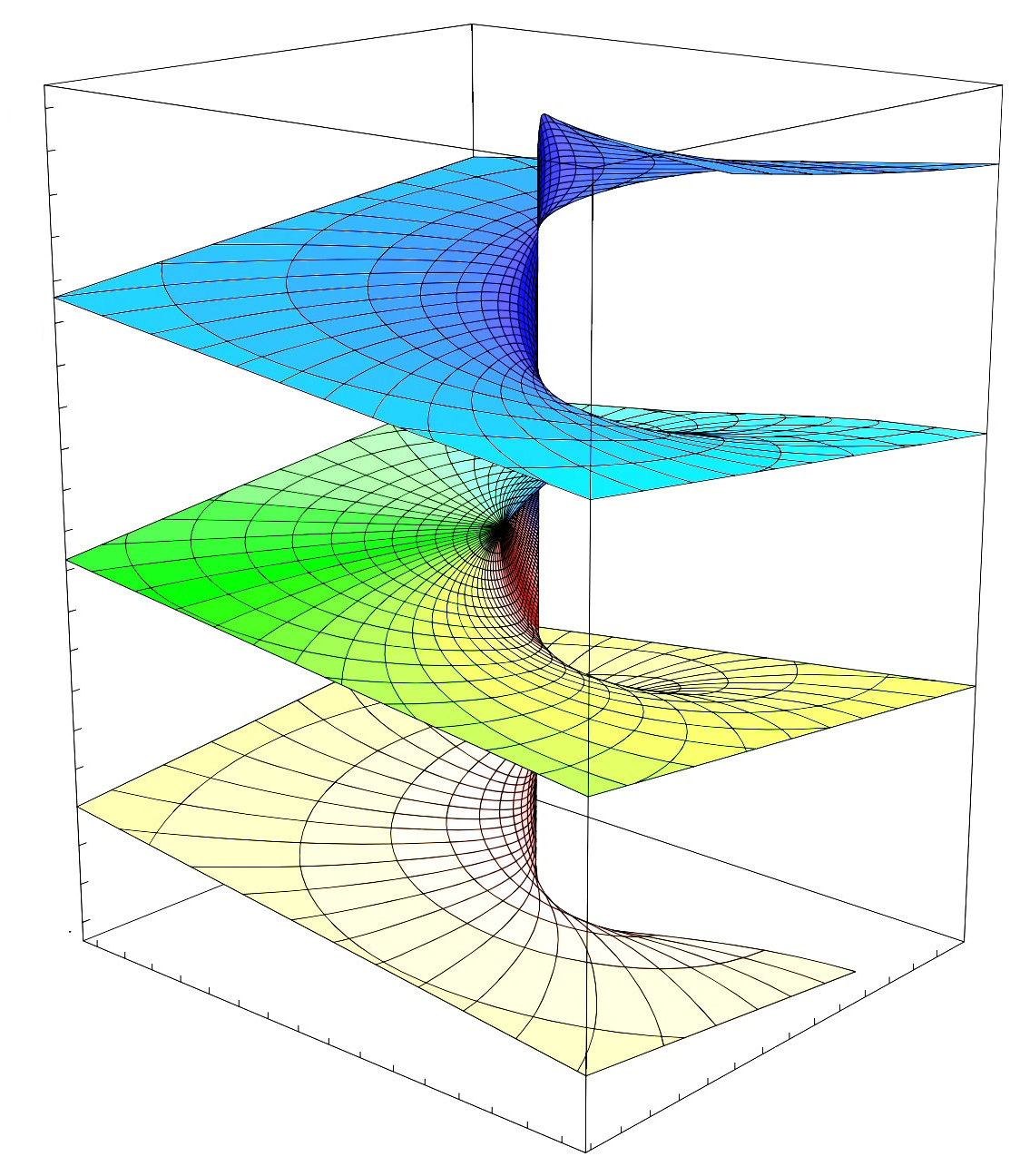}
\quad
\begin{tikzpicture}[scale=.36,arrow/.style={->,>=stealth,thick}]
\newcommand{\vtex}{{$\bullet$}}
\draw
    (180:7)coordinate (Z1)
    (0:7)coordinate (Z3)
    (-90:7)coordinate (Z2)

    (60:7)coordinate (U2)
    (120:7)coordinate (U1)
    (-135:7)coordinate (U3)
    (-45:7)coordinate (U4);

  \foreach \j in {.1,.2,...,.9}
    {
      \path (Z1)--(Z2) coordinate[pos=\j] (m0);
      \path (U1)--(m0) coordinate[pos=.9] (m1);
      \draw[Emerald, very thin] plot [smooth,tension=.5] coordinates {(U1)(m1)(U3)};
    }
  \foreach \j in {.1,.2,...,.9}
    {
      \path (Z3)--(Z2) coordinate[pos=\j] (m0);
      \path (U2)--(m0) coordinate[pos=.9] (m1);
      \draw[Emerald, very thin] plot [smooth,tension=.5] coordinates {(U2)(m1)(U4)};
    }

  \foreach \j in {-25,-16,-9,0,9,16,25,36,49,64,81,100,121,144,169,196,225,256,289,324}
    {
      \path (U1)--(U2) coordinate[pos=.5] (m0);
      \path ($(m0)!\j*.0025!(Z2)$) coordinate (m1);
      \draw[Emerald, very thin] plot [smooth,tension=.5] coordinates {(U2)(m1)(U1)};
    }

\draw[cyan, thick](Z1)to(U1)to(Z2)to(U3)--cycle;
\draw[cyan, thick](Z3)to(U2)to(Z2)to(U4)--cycle;
\foreach \j in {10,20,30,40,50,-25,-35}
{\draw[cyan,thick] (Z1)to(180-\j:7);}
\foreach \j in {10,20,30,40,50,-25,-35}
{\draw[cyan,thick] (Z3)to(\j:7);}
\foreach \j in {-25,25,35,-35}
{\draw[cyan,thick] (Z2)to(-90+\j:7);}
\draw[thick] (0,0) circle (7);

\draw[red,thick](Z1)to[bend left=10](Z2)to[bend left=10](Z3);
\foreach \j in {1,2,3}{
    \draw(Z\j)node[white]{$\bullet$} node[red]{$\circ$};}
\end{tikzpicture}
\caption{The Riemann surface of $\log z$ and foliation of $A_2$ type graded marked surface}\label{fig:RS}
\end{figure}

However, in our case, the singularities are of \emph{exponential type}, in the sense of \cite{HKK},
cf. \cite{IQ2}.
Namely, the local coordinate around a given singularity $p$ is of the form (up to scaling a holomorphic function)
\begin{equation}\label{eq:k+l}
    z^{-l} e^{ z^{-k} }\diff z^{\otimes2},
\end{equation}
where the numerical data here is $(k,l)\in\ZZ_{>0}\times\ZZ$
and $l$ can be calculated as two minus the winding number of the line field around $p$ (cf. \cite{IQ2}).
When performing real blow-up at a (higher order) pole $p$ of type $(k,l)$,
one gets a boundary component $\partial_p$ with $2k$ distinguished points:
\begin{itemize}
\item $k$ of which are in the metric completions (call \emph{closed marked points}) that behave as infinity order zeroes;
\item $k$ of which (called \emph{open marked points}) behave as infinity order poles.
\end{itemize}
The closed and open marked points are in alternative order on $\partial_p$.

The neighbourhood of such an infinity order zero/pole is as
the neighbourhood of zero in the Riemann surface for $\log z$,
cf. the left picture (taken from \cite{Wiki}) of Figure~\ref{fig:RS}
(and thus they should sometimes be considered as marked/unmarked boundary arcs, cf. \cite{HKK}).
Also, the foliation $\grad(\xi)$ on a real blow-up of a Riemann sphere with a single singularity of type $(3,4)$
is shown in the right picture (taken from \cite{IQ2}) of Figure~\ref{fig:RS}.
Our convention is that
red circles are closed marked points and we do not use points to represent open ones in this paper.

Denote by $\xxc^\xi$ the graded marked surface (of closed type),
which is the real blow-up of $\xx$ with respect to $\xi$ equipped with closed marked points $\Y(\xi)$
and foliation $\grad(\xi)$ as its grading.

The foliations induce the \emph{horizontal strip decomposition} of $\surf$ (cf. \cite[\S~2.4]{HKK}),
where the surface is divided into regions/strips consisting of horizontal foliations.
Each strip is either isomorphic to the upper half plane $\mathbf{H}$ (with finite height)
or a strip $\{z\in\CC\mid 0\le \Im (z)\le \Im (z_0)\}$ for some $z_0\in\CC$ with $\Im (z_0)>0$
(with finite height).

An infinite height strip is shown in right picture of Figure~\ref{fig:h.s.}.
A finite height strip is shown in the right picture of Figure~\ref{fig:h.s.}.
In this case,
there is exactly one closed marked point on each boundary of a strip mentioned above,
namely $0$ and $z_0$ respectively.
So there is an unique geodesic connecting these two points,
which is known as the \emph{saddle connection} of this strip, whose angle is $\arg z_0$.
In fact, up to a small rotation of the quadratic differential, we can assume that
there is no horizonal saddle trajectories (known as \emph{saddle-free}),
so that any finite height strip is as the case mentioned above.

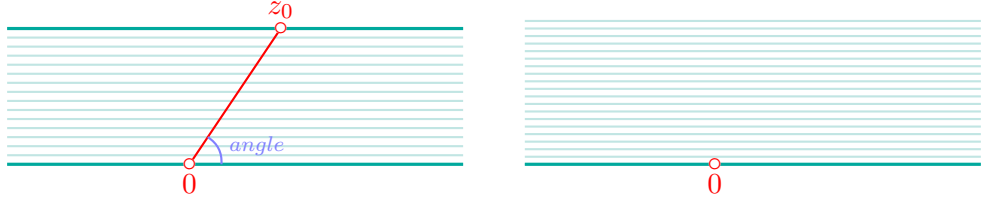
\begin{figure}[h]\centering
\begin{tikzpicture}[yscale=.12,xscale=.12]
\foreach \j in {0,...,15}
    {\draw[Emerald!23, thick](-20,\j)to(30,\j);}
\foreach \j in {0,15}
    {\draw[Emerald, very thick](-20,\j)to(30,\j);}
\draw[red,thick](0,0)to(10,15);
\draw[white](0,0)\nn(10,15)\nn;
\draw[red](0,0)node[below]{$0$}\ww(10,15)\ww node[above]{$z_0$};
\draw[blue!50,thick](3.5,0)to[bend left=-30]node[right]{$^{angle}$}(2,3);
\end{tikzpicture}\qquad
\begin{tikzpicture}[yscale=.1,xscale=.1]
\foreach \j in {0,...,19}
    {\draw[Emerald!23, thick](-25,\j)to(35,\j);}
\foreach \j in {0}
    {\draw[Emerald, very thick](-25,\j)to(35,\j);}
\draw[white](0,0)\nn;
\draw[red](0,0)\ww node[below]{$0$};
\end{tikzpicture}
\caption{Horizontal strips: finite and infinite height types}\label{fig:h.s.}
\end{figure}
In general, a saddle connection is a maximal geodesic connecting zeroes (or points in the metric completion).
Thus, one can integral the (square root of) the quadratic differential along saddle connections.
\subsection{Stability conditions as quadratic differentials}

\begin{definition}
A \emph{$\gms$-framed quadratic differential} $\Xi=(\xx,\xi,\psi)$
consists of a Riemann surface $\xx$, a meromorphic quadratic differential $\xi$ with only exponential type singularities
and a diffeomorphism $\psi\colon\gms \to(\xxc^\xi,\grad(\xi))$ preserving marked points.
Two $\gms$-framed quadratic differentials $(\xx_1,\xi_1,\psi_1)$ and $(\xx_2,\xi_2,\psi_2)$
are equivalent, if there exists a biholomorphism $f\colon\xx_1\to\xx_2$
such that $f^*(\phi_2)=\phi_1$ and $\psi_2^{-1}\circ f_*\circ\psi_1$ is a homeomorphism of
$\gms$ that is isotopic to identity.
Denote by $\FQuad_\infty(\gms)$ the moduli space of $\gms$-framed quadratic differentials on $\gms$.
\end{definition}

The main result of \cite{HKK} is the following, where the surjectivity part is improved by \cite{T}.

\begin{theorem}\label{thm:HKKT}\cite{HKK,T}
There is an isomorphism of complex manifolds
\begin{equation}\begin{array}{rcl}
  \iota=\iota(\gms) \colon\FQuad_\infty(\gms)
    &\xrightarrow{\cong}&\Stab\D_\infty(\gms),
\\ \xi& \mapsto & \sigma .
\end{array}
\end{equation}
Moreover, the graded saddle connections of $\Xi$
correspond (semi)stable objects of $\sigma$
under the bijection $X$ in Theorem~\ref{thm:IQZ}
and, up to $2\pi\ZZ$, the angles of a saddle connection equals $\pi$ times the phase of the corresponding semistable object.
\end{theorem}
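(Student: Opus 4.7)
The plan is to construct the map $\iota$ explicitly from horizontal strip data, verify the stability axioms using the intersection/morphism dictionary of Theorem~\ref{thm:IQZ}, and then upgrade to a global isomorphism via the period map on one side and Bridgeland's local homeomorphism $\mathcal{Z}$ on the other. Surjectivity is the delicate part, where Takeda's completion argument enters.

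First I would fix a framed quadratic differential $\Xi=(\xx,\xi,\psi)$ and, after rotating by a small generic $e^{i\theta}$, assume $\xi$ is saddle-free, so that the horizontal foliation cuts $\surf$ into finite-height strips and infinite-height half-planes as in Figure~\ref{fig:h.s.}. Each finite-height strip contains a unique saddle connection $\gamma$, and its period $z_0(\gamma)=\int_\gamma\sqrt{\xi}$ lives in the upper half plane of $\CC$. Via the bijection $X$ of Theorem~\ref{thm:IQZ}, each such $\gamma$ lifts canonically to a graded saddle connection $\widetilde\gamma$ and hence to an indecomposable $X_{\widetilde\gamma}\in\DS$. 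One then defines $Z(X_{\widetilde\gamma})=z_0(\gamma)$, declares $\phi(X_{\widetilde\gamma})=\tfrac1\pi\arg z_0(\gamma)$, and extends $Z$ linearly to $K(\DS)$ using that the $\{X_{\widetilde\gamma}\}$ coming from a saddle-free strip decomposition form a basis. The rotation by $\theta$ can be undone at the end by the $\CC$-action on $\Stab\DS$, so the construction is independent of the small perturbation.

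Next I would build the slicing $\sli$ by declaring each $X_{\widetilde\gamma}$ semistable of the prescribed phase, and constructing HN-filtrations for an arbitrary object $X_{\widetilde\eta}$ by following how the graded curve $\widetilde\eta$ traverses the horizontal strips: each passage through a strip contributes a layer of the filtration whose associated graded piece is the corresponding $X_{\widetilde\gamma}$, and Theorem~\ref{thm:IQZ} promotes the transverse intersections into the actual connecting morphisms in the triangles. Axiom (a) holds by definition; axiom (b) follows from the fact that grading shift on $\gms$ matches the deck transformation of $\MTS^\grad$, which rotates saddle connections by $\pi$ in the period plane; axiom (c) is the crucial compatibility, and it follows from Theorem~\ref{thm:IQZ} combined with the geometric observation that any index $\rho$ intersection between two graded saddle connections forces $\phi_2-\phi_1\in[0,\rho]$; axiom (d) is the HN-filtration just constructed. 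The support property comes for free from the fact that saddle-connection periods vary smoothly in $\xi$ and are bounded below on a neighborhood.

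For the local homeomorphism property I would compare the period map $\FQuad_\infty(\gms)\to \Hom(K(\DS),\CC)$ sending $\xi$ to the period of a maximal set of saddle connections with Bridgeland's central charge map $\mathcal{Z}$: both are local isomorphisms to the same target, and by construction $\iota$ intertwines them, so $\iota$ is a local diffeomorphism. Injectivity follows because the saddle-connection periods determine $\xi$ up to isotopy of the framing. The main obstacle is \emph{surjectivity} of $\iota$, i.e. showing that every $\sigma\in\Stab\DS$ arises from some $\xi$; this is exactly the point where the original \cite{HKK} argument needed extra hypotheses and where \cite{T} provides the improvement. The strategy here is to begin from a tilted heart with a finite set of simples (each of which must correspond to a putative saddle connection), integrate this combinatorial data into a quadratic differential on a Riemann surface homeomorphic to $\surf$, and then show via a continuity/completeness argument that the assignment extends over the boundary strata of $\Stab\DS$ where some phases collide or escape to infinity. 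The technical heart is controlling the limits of horizontal strip decompositions along a path in $\Stab\DS$; once this is done, the final claim, that graded saddle connections biject with semistable objects with angle equal to $\pi$ times the phase, follows directly from how $Z$ and $\sli$ were defined on basis elements and then propagated to all objects.
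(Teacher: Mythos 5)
The first thing to note is that the paper contains no proof of Theorem~\ref{thm:HKKT}: it is quoted as the main result of \cite{HKK}, with the surjectivity part credited to \cite{T}, and is then used as a black box. So your proposal cannot be compared with an internal argument of the paper; it has to be measured against the actual proofs in those references, and as a reconstruction of them it is a fair outline of the strategy (saddle-free perturbation, horizontal strip decomposition, periods as central charges, HN-filtrations from how a curve crosses the strips, period map versus Bridgeland's local homeomorphism $\mathcal{Z}$), but it falls short of a proof at several points.

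Two gaps are substantive. First, your slicing is built only from the objects $X_{\widetilde\gamma}$ attached to saddle connections of finite-height strips, but the semistable categories $\sli(\phi)$ of the stability condition attached to $\xi$ are strictly larger: closed geodesics sweeping out ring domains contribute whole families of semistable objects (closed curves with local systems, precisely the objects excluded from the morphism formula \eqref{eq:int} of Theorem~\ref{thm:IQZ}), and the present paper itself depends on these in Corollary~\ref{cor:bb} and Proposition~\ref{pp:reach2}. Declaring only the strip objects semistable does not produce the correct slicing, and the verification of axioms (c)--(d) of Definition~\ref{def:stab} must handle these ring-domain objects, for which the intersection-to-morphism dictionary you invoke is not available in the stated form; likewise your claim for axiom (c), that an index $\rho$ intersection forces $\phi_2-\phi_1\in[0,\rho]$, is itself something requiring a geometric argument of the kind in Lemma~\ref{lem:bb}, not a quotable fact. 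Second, surjectivity --- which you rightly flag as the point where \cite{T} enters --- is only named, not argued: ``integrate the combinatorial data into a differential and control limits of strip decompositions'' is a restatement of the difficulty rather than a proof; \cite{HKK} only establish that the image is a union of connected components, and closing that gap is exactly the content of Takeda's argument, which your sketch does not reproduce. Smaller issues: the support property does not ``come for free'' (one needs a uniform lower bound on $|Z(E)|/\norm{[E]}$ over all semistables, including the ring-domain ones), and the independence of the construction from the chosen small rotation and from wall-crossings of the strip decomposition needs an explicit gluing/continuity check rather than an appeal to the $\CC$-action alone.
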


\subsection{Winding numbers}\label{sec:winding}
\def\uk{\mathbf{k}}
\def\ul{\mathbf{l}}
\def\uw{\mathbf{w}}

Denote by $\num(\gms)=(\uk,\uw)$ the partial numerical data of $\gms$,
for $\uk=(k_1,\ldots,k_b)$ and $\uw=(w_1,\ldots,w_b)$,
where $k_i$ is the number of closed marked points on a boundary component $\partial_i$
and $w_i$ the \emph{(clockwise) winding number} around $\partial_i$.
Note that comparing with \eqref{eq:k+l}, we have $w_i=2-l_i$ and
they satisfy
\[
    \sum_{i=1}^b l_i=4-4g\quad\Longleftrightarrow\quad \sum_{i=1}^b w_i=4g-4+2b
\]
(see \cite{LP2,IQ2} for details).

We are interested in a particular class of arcs on $\surf$, i.e. the minimal arcs.
A \emph{minimal arc} on $\surf$ is an arc connecting two adjacent closed marked points on
some boundary component, such that it is isotopic to a boundary segment.
For instance, the arcs $\eta_j$ in Figure~\ref{fig:Apq} are minimal arcs.

\begin{example}\label{ex:Apq}
Consider the case that $\surf$ is an annulus with boundaries $\partial_m$ and $\partial_r$.
Then $\DS$ is triangle equivalent to the bounded derived category $\Dpq$ of a graded $\widetilde{A_{m,r}}$ quiver
(with $m+r$ vertices whose arrows form a non-oriented cycle,
$m$ of which are clockwise and the other $r$ are anticlockwise).
\[
\begin{tikzpicture}[yscale=.5]
\draw(0.5,0) node[left]{$\widetilde{A_{m,r}}\colon\qquad$} node (x1) {} (1,2) node (x2) {} (2,3) node (x3) {}  (5,-1) node (y1){};
\draw(0.5,0) node (x1) {} (1,-2) node (x4) {} (2,-3) node (x5) {} ;
\foreach \j/\i in {1/2,2/3,1/4,4/5}{
\draw[->,>=stealth](x\j)\ww to (x\i) \ww ;}
\draw[->,>=stealth](x3) to (2.7,3) node[right,rotate=0]{$\cdots$};
\draw[->,>=stealth](x5) to (2.6,-3) node[right,rotate=-0]{$\cdots$};
\draw[->,>=stealth](3.7,3) to  (y1)\ww;
\draw[->,>=stealth](3.5,-3) to  (y1)\ww;
\end{tikzpicture}
\]
Note that the sum of the winding numbers is zero in this case.
Then norm form of the numerical data can be chosen to be
$\num(\gms)=((m,r),(w,-w))$ for $m,r\in\ZZ_+$ and $w\in\ZZ_{\ge0}$.
\begin{lemma}\label{lem:gd}
$\gd \Dpq\le 1+w/m$.
\end{lemma}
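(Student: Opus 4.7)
The plan is to exhibit a single stability condition $\sigma$ on $\Dpq\simeq\DS$ with $\gldim\sigma\le 1+w/m$; this will give $\gd\Dpq\le 1+w/m$ by definition. Since Theorem~\ref{thm:HKKT} identifies $\Stab\DS$ with the moduli space $\FQuad_\infty(\gms)$, the cleanest way to produce $\sigma$ is to build an explicit quadratic differential $\xi$ on $\mathbb{CP}^1$ with two exponential-type singularities of types $(m,2-w)$ and $(r,2+w)$ (so that the real blow-up reproduces $\gms$ with $\num(\gms)=((m,r),(w,-w))$, using the relation $w_i=2-l_i$ from Section~\ref{sec:winding}), and then read off $\gldim$ from the geometry of its saddle connections.

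\textbf{Step 1: Pick a model differential.} I would start with the local model $\xi_0=z^{-(2-w)}e^{z^{-m}}\diff z^{\otimes 2}$ at $0$ and the symmetric model at $\infty$, and try to choose a meromorphic quadratic differential on $\mathbb{CP}^1$ matching these two local models. The rotational $\ZZ/m$-symmetry of the local model makes the horizontal foliation near $\partial_m$ very regular: the $m$ closed marked points on $\partial_m$ are evenly distributed, and moving from one to the next along $\partial_m$ the horizontal foliation rotates by exactly $\pi\cdot w/m$ in the tangent bundle. A similar statement holds at $\partial_r$ with $-w/r$.

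\textbf{Step 2: Bound phases of semistables.} Under $\iota(\gms)$, graded saddle connections correspond to (semi)stable objects with $\pi\cdot \phi_\sigma$ equal to their angle (mod $2\pi$). By Theorem~\ref{thm:IQZ}, nonzero $\Hom^\rho(X_{\widetilde\alpha},X_{\widetilde\beta})$ forces an index-$\rho$ intersection of the graded curves. Hence
\[
\gldim\sigma=\sup\bigl\{\phi_\sigma(X_{\widetilde\beta})-\phi_\sigma(X_{\widetilde\alpha})\bigm| \widetilde\alpha,\widetilde\beta\text{ graded saddle connections intersecting}\bigr\}.
\]
I would then compare the phases of saddle connections. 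Up to a global rotation, I can normalise so that the minimal arc $\eta_1$ on $\partial_m$ has phase $0$. The $\ZZ/m$-symmetry together with the winding statement in Step~1 forces the minimal arcs $\eta_j$ along $\partial_m$ to have phases $(j-1)w/m$. Any other saddle connection has phase trapped in the interval determined by its endpoints on the two boundaries, and the extremal pair of intersecting graded curves realises a phase difference at most $1+w/m$: the $+1$ comes from the usual ``going once around a zero'' contribution (as in the acyclic non-Dynkin case, where $\gd=1$), and the $w/m$ is the extra rotation picked up along $\partial_m$ between the two endpoints.

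\textbf{Main obstacle.} The delicate part is verifying that no pair of intersecting graded arcs gives a larger index than $1+w/m$ — one must check this uniformly over all indecomposables (described via arcs in Theorem~\ref{thm:IQZ}), not just the simples. The worst case is when both endpoints lie on $\partial_m$ on opposite ``sides'' after the $w$-twist, and a careful angle count there is what produces the bound $1+w/m$. If one prefers to avoid the analytic construction of $\xi$, an alternative is to write down $\sigma$ algebraically on $\Dpq$ by specifying central charges on the $m+r$ simples of the graded $\widetilde{A_{m,r}}$-quiver (with increments $w/m$ along the $m$-side and $-w/r$ along the $r$-side) and to compute $\gldim$ combinatorially via \eqref{eq:gldim a}-style polygon/strip arguments; the same estimate $1+w/m$ then emerges.
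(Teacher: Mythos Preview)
Your plan heads in a reasonable direction, but the heart of the argument---the uniform bound on phase differences over \emph{all} intersecting pairs of graded saddle connections---remains a genuine gap, not a detail. You yourself flag it as the ``Main obstacle'' and then only assert that ``a careful angle count there is what produces the bound $1+w/m$,'' without performing that count. The difficulty is real: for a quadratic differential on the annulus there are typically infinitely many saddle connections (cf.\ Figure~\ref{fig:wind around}), and a $\ZZ/m$-symmetry argument alone controls the angles between the $m$ minimal arcs at $\partial_m$; it does not by itself rule out a larger angle elsewhere in the core, for instance from a long arc spiralling several times before landing. Your phase formula ``$(j-1)w/m$'' is also off: equation~\eqref{eq:index sum} gives $\sum_j i_{p_j}(\widetilde{\eta_j},\widetilde{\eta_{j+1}})=w+m$, so adjacent minimal arcs differ in phase by $1+w/m$ on average, not $w/m$.

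The paper bypasses the analytic construction entirely and works on the categorical side. It takes the $m$ minimal arcs on $\partial_m$, chooses graded lifts with integer intersection indices $i_{p_j}=\lfloor j(m+w)/m\rfloor-\lfloor (j-1)(m+w)/m\rfloor$ (these sum to $w+m$ and each lies in $\{\lfloor 1+w/m\rfloor,\lfloor 1+w/m\rfloor+1\}$), and then completes them to a full formal arc system $\ac$ in which every remaining intersection has index exactly $1$. The corresponding objects then form a simple-minded collection, hence the simples of a heart $\h_\ac$, and one assigns central charges by hand so that the phases of the $X_{\widetilde{\eta_j}}$ realise differences of exactly $1+w/m$ while all other simples sit at controlled phases. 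The verification $\gldim\sigma=1+w/m$ is then a finite check (or an appeal to Proposition~\ref{pp:reach1+}). This is essentially the ``alternative'' you gesture at in your last paragraph, but the substantive content you are missing is precisely the explicit distribution of integer indices among the $\widetilde{\eta_j}$ and the completion to a full formal arc system with all remaining indices equal to $1$; without that, neither your geometric nor your algebraic sketch can conclude the claimed value of $\gldim$.
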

\begin{proof}
When $w=0$, we have $\gd\Dpq=1$, which was calculated in \cite{Q1}.
Now assume that $w>0$.
Let $\eta_1,\ldots,\eta_m$ be the minimal arcs on $\partial_m$ in clockwise order,
as shown (red arcs) in Figure~\ref{fig:Apq}.
By convention, the subscript will be in $\ZZ_m=\ZZ/m\ZZ$.
\begin{figure}[h]\centering
\begin{tikzpicture}[xscale=-.7,yscale=.7,arrow/.style={->,>=stealth,thick}]
\draw[thick,fill=gray!11] (0,0) circle (1);\draw[thick](0,0) circle (3)(90:1)edge[red]node[left]{$\alpha$}(90:3);
\foreach \j in {1,2,0} {
    \draw[red,thick]plot [smooth,tension=1.2] coordinates { (120*\j+90:1)(120*\j+30:1.5)(120*\j-30:1) }
        (120*\j-30:.6) node[black]{\footnotesize{$p_{\j}$}};
}
\draw[red,thick,dashed](90:1)to(90:3)(90+60:3)\ww to[bend left=5](90+120:3)
    (90:3)to[bend left=-5](90-60:3)\ww to[bend left=-5](90-120:3)to[bend left=35](90+120:3);

\foreach \j in {1,2,3} {\draw[red,thick,dashed] (120*\j+90:3)\ww (120*\j+90:1)\ww
    (120*\j-90:1.8)node{\footnotesize{$\eta_{\j}$}} ;}
\end{tikzpicture}
\caption{A full formal arc system containing certain minimal arcs in the annulus case}\label{fig:Apq}
\end{figure}
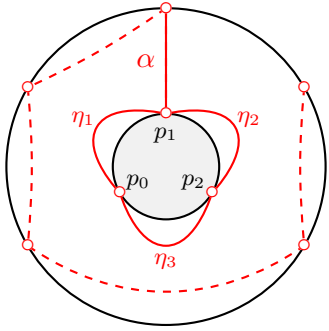

For any graded lifts $\widetilde{\eta_j}$ of $\eta_j$, we have
\begin{gather}\label{eq:index sum}
 \sum_{j=1}^m i_{p_j}( \widetilde{\eta_j} , \widetilde{\eta_{j+1}} )=w+m,
\end{gather}
where $i_{p_j}$ is the intersection index of $\widetilde{\eta_j}$ and $\widetilde{\eta_{j+1}}$ at $p_j$,
cf. Figure~\ref{fig:Apq}.
Therefore, we can choose certain graded lifts of $\eta_j$ such that
\[
    i_{p_j}( \widetilde{\eta_j} , \widetilde{\eta_{j+1}} )=
    \lfloor j(m+w)/m \rfloor
    -\lfloor (j-1)(m+w)/m \rfloor
    ,\quad \forall j.
\]
In particular, $i_{p_1}( \widetilde{\eta_1} , \widetilde{\eta_{2}} )\ge2$ and
\[
    \lfloor 1+w/m \rfloor  \le
    i_{p_j}( \widetilde{\eta_j} , \widetilde{\eta_{j+1}} )\le
        \lfloor 1+w/m \rfloor +1.
\]
Then we can complete $\{ \widetilde{\eta_j} \mid j\in\ZZ_m \}$ to a full formal arc system $\ac$
(cf. dashed arcs in Figure~\ref{fig:Apq}) such that
\begin{itemize}
\item there is exactly one arc $\alpha$ that is incident $\partial_m$ at $p_1$ and connects two boundaries;
\item for any two graded arcs $\eta,\eta'$ in $\ac-\{ \widetilde{\eta_j} \mid j\in\ZZ_m \}$,
there is at most one intersection between them and,
if they intersect, the intersection index is 1.
\end{itemize}
Here, a full formal arc system is a collection of (graded) arcs that divide $\surf$ into polygons,
such that each polygon contains exactly one boundary segment.
The objects corresponding to a full formal arc system is a set of generators for $\Dpq$.
The condition on intersection index can be translated to
\begin{gather}
    \Hom^{\le0}(X_{\widetilde \eta}, X_{\widetilde \eta'} )=0,\quad \forall\eta,\eta'\in\ac.
\end{gather}
Thus $\{ X_{\widetilde \eta} \mid \eta\in\ac \}$ form a so-called \emph{simple minded collection}.
Equivalently, they are the set of simple objects of a heart $\h_\ac$ that they generate $\Dpq$.
Furthermore (cf. \cite[Lem~5.2]{B1}), to give a stability condition $\sigma$ in $\Stab\D_\infty(\gms)$
with heart $\h_\ac$ is equivalent to make a choice of central charges for simples
$$\{ Z(X_{\widetilde \eta})\in\mathbf{H} \mid \eta\in\ac \},$$
where $$\mathbf{H}=\{ z=r e^{\bi \pi \theta}\mid r\in\RR_{>0}, \theta\in [0,1) \}\subset\CC$$
is the upper half plane.
We can find a stability condition $\sigma$ such that
\begin{gather}
\begin{cases}
   \phi_\sigma(  X_{\widetilde \eta_{j+1}}  )= j(m+w)/m - \lfloor j(m+w)/m\rfloor,
    & \forall j\\
   \phi_\sigma(  X_{\widetilde \eta'}  )=\phi_\sigma(  X_{\widetilde \eta}  ),
    & \text{for any $\eta,\eta'\notin\ac-\{\eta_j\mid j\in\ZZ_m\}$,}\\
   \phi_\sigma(  X_{\widetilde \alpha}  )=\phi_\sigma(  X_{\widetilde \eta_1}  ).
\end{cases}
\end{gather}
Then we have
\[
    \phi_\sigma(  X_{\widetilde \eta_{j+1}}  )-\phi_\sigma(  X_{\widetilde \eta_j}  )=1+w/m,
\]
for any $j$.
One can check that $\gldim\sigma=1+w/m$
(or use Proposition~\ref{pp:reach1+}),
which completes the lemma.
\end{proof}
\end{example}
\section{Contractible flow}
\subsection{General strategy}
In this section, we develop a strategy to attack the contractibility conjecture
of spaces of stability conditions.
The idea is to use the function $\gldim$ to induce a contractible flow.
Of cause, this strategy should only apply to the `Calabi-Yau-$\infty$' case,
as $\gldim$ is constant on Calabi-Yau-$N$ categories (for $N\in\ZZ_+$).

\begin{definition}\label{def:CP}
Given a stability condition $\sigma$, define a set $\CP{\sigma}$
\begin{gather}\label{eq:CP}
\{ (M_1,M_2) \mid M_i\in\Sim\hh{P}(\phi_i), M_1[\ZZ]\neq M_2[\ZZ], \Hom(M_1,M_2)\neq0, \phi_2-\phi_1=\gldim\sigma  \}
\end{gather}
which consists of pairs of stable indecomposable objects whose phase difference achieves
the value $\gldim\sigma$ and has non-zero morphisms in $\hh{P}$.
Note that we also require that the objects in such a pair are not in the same shift orbit
to exclude the case of nontrivial higher self-extension of an object.

Define a subspace
\begin{equation}\label{eq:subspace}
    \Stab_{\CP{\sigma}}\D\colon= \{\sigma'\in\Stab\D\mid \CP{\sigma'}=\CP{\sigma} \}.
\end{equation}
\end{definition}

We will prove that \eqref{eq:subspace} is determined by a collection of equations and the following conjecture,
which allow us to use the differential of $\gldim$ to contract (certain part of) the space of stability conditions piecewise.

\begin{conjecture}\label{cond}
$\Stab_{\CP{\sigma}}\D$ is a real submanifold of $\Stab\D$
where the function $\gldim$ is differentiable without critical points restricted to its interior.
\end{conjecture}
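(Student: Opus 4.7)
The plan is to describe $\Stab_{\CP{\sigma}}\D$ locally by translating its defining conditions through Bridgeland's central charge homeomorphism $\mathcal{Z}$ from \eqref{eq:Z}, show it is cut out by real-analytic equations together with open conditions, and then verify that the gradient of $\gldim$ along this locus is nowhere vanishing.

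Write $\CP{\sigma}=\{(M_1^{(i)},M_2^{(i)})\}_{i=1}^{k}$. First I would decompose the condition $\CP{\sigma'}=\CP{\sigma}$ into three parts: (a) each prescribed $M_j^{(i)}$ remains $\sigma'$-stable and the prescribed nonzero morphism persists, an open condition via continuity of slicings and rigidity of $\Hom$ groups under small deformations; (b) the phase differences $\phi_{\sigma'}(M_2^{(i)})-\phi_{\sigma'}(M_1^{(i)})$ coincide for all $i$, giving $k-1$ real equalities; (c) no further pair of stables achieves the maximum, an open condition via semicontinuity of $\gldim$ together with the support property (which bounds how many candidate pairs can compete for the maximum on a compact region).

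Via $\mathcal{Z}$, the equalities in (b) become smooth real-analytic equations on $\Hom_{\ZZ}(K(\D),\CC)\cong\CC^n$ of the form
\[
    \arg Z(M_2^{(i)})-\arg Z(M_1^{(i)})=\arg Z(M_2^{(1)})-\arg Z(M_1^{(1)}),\qquad i=2,\ldots,k.
\]
To confirm the submanifold structure, I would verify linear independence of differentials by organizing the objects $\{M_j^{(i)}\}$ as vertices of a graph with edges given by the pairs in $\CP{\sigma}$ and arguing component-by-component, using the hypothesis $M_1^{(i)}[\ZZ]\neq M_2^{(i)}[\ZZ]$ to rule out trivial cancellations among the one-forms $d\phi(M_j^{(i)})$. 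On the interior (where (a) and (c) are strict), $\Stab_{\CP{\sigma}}\D$ is then a real submanifold of codimension $k-1$. Restricted to it, $\gldim$ coincides with the common phase difference $\pi^{-1}(\arg Z(M_2^{(1)})-\arg Z(M_1^{(1)}))$, hence is smooth; to show it has no critical points, I would produce an explicit tangent vector by rotating each ratio $Z(M_2^{(i)})/Z(M_1^{(i)})$ by a common infinitesimal angle while preserving the moduli $|Z(M_j^{(i)})|$, a perturbation that preserves all equalities in (b), remains inside the open conditions of (a) and (c), and strictly increases $\gldim$.

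The main obstacle I expect is part (c): controlling which new pairs $(N_1,N_2)$ outside $\CP{\sigma}$ might enter $\CP{\sigma'}$ as $\sigma'$ moves. Although the support property ensures only finitely many candidates have phase difference approaching $\gldim \sigma$ within bounded mass, precisely identifying the interior of $\Stab_{\CP{\sigma}}\D$ and showing it is dense in its closure requires a delicate local finiteness argument for stables of bounded mass. A secondary obstacle is the linear-independence step in part (b): accidental $\ZZ$-linear relations among the classes $[M_j^{(i)}]$ in $K(\D)$ could in principle create dependencies in the defining equations, and handling these may require either a genericity assumption on $\sigma$ or a combinatorial case analysis driven by the structure of the pair-graph of $\CP{\sigma}$.
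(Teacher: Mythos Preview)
The statement you are attempting to prove is a \emph{conjecture} in the paper, not a theorem; the paper gives no proof of it for a general triangulated category $\D$. What the paper does prove is Theorem~\ref{thm:main}, which is precisely this statement in the special case $\D=\DS$ under the additional hypothesis $1\le\gldim\sigma\notin\VC(\gms)$. So your proposal is an attempt to prove something the paper deliberately leaves open.

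That said, your outline tracks the paper's argument for the special case almost exactly: the decomposition into open conditions and phase-equality equations, the use of the local chart $\mathcal{Z}$, the pair-graph on the objects $\{M_j^{(i)}\}$, and the reduction of $\gldim$ to a single coordinate on the locus. The two obstacles you flag at the end are not technicalities but the entire content of the problem, and the paper resolves them only by appealing to surface geometry that has no analogue in general.

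Concretely: for linear independence of the $k-1$ equations in (b), the paper shows (via Proposition~\ref{pp:min arc} and Corollary~\ref{cor:cycle}) that the pair-graph $G^\sigma$ is acyclic precisely because $\gldim\sigma\notin\VC(\gms)$; a cycle forces the arcs to be the minimal arcs around a boundary component and pins $\gldim\sigma$ to a critical value. Acyclicity, together with the fact that the underlying arcs $\{\eta_\pm^{p_j}\}$ can be completed to a full formal arc system, implies that the classes $[M_\pm^{p_j}]$ are part of a \emph{basis} of $K(\DS)$, whence both $\{Z_j^\pm\}$ and the differences $\{Z_j^+-Z_j^-\}$ are linearly independent. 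Your proposed route---``rule out trivial cancellations using $M_1^{(i)}[\ZZ]\neq M_2^{(i)}[\ZZ]$'' or ``combinatorial case analysis on the pair-graph''---does not supply this: in a general $\D$ there is no reason the classes $[M_j^{(i)}]$ avoid $\ZZ$-linear relations, and the condition $M_1[\ZZ]\neq M_2[\ZZ]$ only excludes the crudest dependency. Similarly, your tangent-vector construction (rotate all ratios $Z(M_2^{(i)})/Z(M_1^{(i)})$ simultaneously) presupposes you can vary these central charges independently, which again needs the classes to be independent in $K(\D)$. For part~(c), the paper's argument is a direct $\epsilon/4$-ball estimate using that all competing angles are angles of a finite polygonal core $\core(\xi)$; a general finiteness argument via the support property, as you sketch, would need substantially more.

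In short: your plan is the right skeleton, and it is essentially what the paper executes in the surface case, but the steps you mark as ``obstacles'' are exactly where the paper invokes the geometry of $\gms$ and the hypothesis on $\VC(\gms)$. Without a replacement for those inputs, the argument does not close for general $\D$, which is why the paper states it as Conjecture~\ref{cond}.
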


Moreover, we expect the following for many cases,
which holds for the case of coherent sheaves on the projective plane (cf. \cite{Q1,FLLQ}).
\begin{conjecture}\label{conj:contract}
The differential of $\gldim$ provide a flow such that
$\Stab_{<x}\D$ contracts to $\Stab_{<y}\D$ for any $\gd\D<y<x$.
If in addition that $\D$ is $\gldim$-reachable, then
$\Stab_{<x}\D$ contracts to $\Stab_{y}\D$ for $y=\gd\D$ and any $y<x$.
\end{conjecture}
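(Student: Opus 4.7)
The plan is to use $-\gldim$ as a piecewise-smooth function whose anti-gradient flow effects the desired deformation retraction, stratifying $\Stab\D$ by the locally constant invariant $\CP{\sigma}$ from Definition~\ref{def:CP}. The idea is that on each stratum $\Stab_{\CP{\sigma}}\D$ the witnesses to the global dimension are fixed combinatorial data, namely a finite collection of stable pairs at extremal phase difference, so Conjecture~\ref{cond} supplies a smooth $\gldim$ with no critical points there, and a unit vector field $v_\sigma$ in the direction of steepest descent is then well defined on the interior.

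I would proceed in four steps. First, verify that $\{\Stab_{\CP{\sigma}}\D\}$ is a locally finite stratification of $\Stab_{>\gd\D}\D$; this relies on the continuity of $\gldim$ together with the fact that only finitely many pairs $(M_1,M_2)$ can simultaneously realise an open range of values of $\gldim$ near a fixed $\sigma$. Second, extend the vector field continuously across lower-dimensional strata by showing that $\CP{\cdot}$ is upper semi-continuous, so that pairs can only be added in the limit, not lost. Third, integrate to obtain a flow $\Phi_t$ on $\Stab_{>\gd\D}\D$; the absence of critical points forces $t\mapsto\gldim(\Phi_t(\sigma))$ to strictly decrease at a uniformly positive rate on each compact slice, so the flow transports $\Stab_{<x}\D$ into $\Stab_{<y}\D$ in finite time, and the standard time-reparametrisation trick converts this into a deformation retraction. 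Fourth, when $\D$ is $\gldim$-reachable, the level set $\Stab_{\gd\D}\D$ is non-empty, and running the same flow to its natural limit collapses $\Stab_{<x}\D$ onto it, provided one can rule out escape to infinity in $\Stab\D$.

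The main obstacle is twofold. The analytic difficulty lies in matching $v_\sigma$ across stratum walls: even granting Conjecture~\ref{cond} on each open piece, the gradient directions defined via different sets of active pairs need not agree on shared boundaries, so one must either regularise by a partition of unity subordinate to the stratification, or else pass through Bridgeland's local homeomorphism $\mathcal{Z}\colon\Stab\D\to\Hom_\ZZ(K(\D),\CC)$ to build the flow linearly in $\mathcal{Z}$-coordinates and patch. The global difficulty is controlling escape: as $t$ grows the flow could in principle leave $\Stab\D$ by losing the support property, so one needs a properness statement for $\gldim$ relative to $\CC$-orbits. For the Fukaya-category setting of this paper, both issues are controlled by the $q$-intersection formalism and Theorem~\ref{thm:HKKT}, which explains why Corollary~\ref{cor:main} is unconditional there while the general conjecture remains open.
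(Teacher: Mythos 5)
The statement you are trying to prove is stated in the paper as a \emph{conjecture} (Conjecture~\ref{conj:contract}), and the paper offers no proof of it in this generality: it only establishes the analogous result for topological Fukaya categories $\DS$ (Theorem~\ref{thm:main} and Corollary~\ref{cor:main}), via the quadratic-differential model of Theorem~\ref{thm:HKKT}, and cites \cite{FLLQ} for $\coh\bP^2$. Your proposal correctly reproduces the paper's general philosophy (stratify by $\CP{\sigma}$, use the differential of $\gldim$ as a descending flow), but it is a strategy outline rather than a proof, and the steps you leave open are exactly the ones that make this a conjecture.

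Concretely, three of your steps fail or beg the question in general. First, your stratification step presupposes that every $\sigma$ is $\gldim$-reachable and that only finitely many pairs attain the supremum near $\sigma$; neither is automatic -- the paper's own example of $\D^b(\coh X)$ for a curve of genus $g>1$ is not $\gldim$-reachable, so $\CP{\sigma}$ is empty on a large region and the stratification carries no information there, while in the surface case reachability and finiteness are nontrivial outputs of the geometric model (Corollary~\ref{cor:reach2}, Propositions~\ref{pp:reach1} and~\ref{pp:reach2}). Second, your semicontinuity claim that ``pairs can only be added in the limit, not lost'' contradicts the wall-crossing behaviour described in the paper's remark after Corollary~\ref{cor:main}: on a wall some $M_\pm^{p_j}$ get destabilized and the pairs in $\CP{\sigma}$ are \emph{replaced}, so the descent directions on the two sides of a wall are computed from different active pairs and need not match; a partition-of-unity regularisation would then have to be shown to still strictly decrease $\gldim$, which is not automatic and is not addressed. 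Third, your finite-time transport and no-escape arguments invoke ``uniformly positive rate on each compact slice'' and a properness statement for $\gldim$, but $\Stab_{<x}\D$ is not compact even modulo the $\CC$-action and no such properness is available in general; even Conjecture~\ref{cond}, which you take as an input, is itself unproven outside the surface case, where it is verified by the explicit polar coordinates $Z_j^+-Z_j^-=m_je^{\bi\pi\theta_j}$ in the proof of Theorem~\ref{thm:main}. So the proposal does not close the gap between the paper's surface-case theorem and the general conjecture.
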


\subsection{Max angle as gldim and reachability}
Recall that the rank $n$ of $\surf$ in \eqref{eq:rank} is required to be at least 2.
We apply the general strategy above to the topological Fukaya categories $\DS$.
\def\core{\operatorname{Core}}

Take $\sigma\in\Stab\DS$ with $\CP{\sigma}$ as in Definition~\ref{def:CP}.
Let $\Xi=(\xx,\xi,\psi)=\iota^{-1}(\sigma)$ be the $\gms$-framed quadratic differential as in Theorem~\ref{thm:HKKT}.
We will identify $(\surf, \Y, \grad)$ with $(\xx^\xi,\Y(\xi),\lambda(\xi))$ via $\psi$
when there is no confusion.

\begin{lemma}\label{lem:bb}
Let $M_1,M_2$ be two $\sigma$-semistable indecomposable objects with
corresponding graded curves $\widetilde{\gamma_i}$ on $\gms$, respectively.
If $\widetilde{\gamma_1}$ and $\widetilde{\gamma_2}$ intersect in the interior of $\surf$ of index 0, then
\begin{gather}\label{eq:ineq}
    \phi_\sigma(M_1) < \phi_\sigma(M_2) < \phi_\sigma(M_1)+1
\end{gather}
\end{lemma}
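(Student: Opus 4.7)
The plan is to translate the categorical statement into a purely geometric one via Theorem~\ref{thm:HKKT}, and then read off the inequality from the definition of the intersection index.

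First, I would invoke Theorem~\ref{thm:HKKT} (together with the bijection $X$ from Theorem~\ref{thm:IQZ}) to realize each $\sigma$-semistable indecomposable $M_i$ as a graded saddle connection $\widetilde{\gamma_i}$ of the framed quadratic differential $\Xi=\iota^{-1}(\sigma)$, in such a way that, up to $2\pi\mathbb{Z}$, the argument of $\widetilde{\gamma_i}$ equals $\pi\,\phi_\sigma(M_i)$. More precisely, because the grading on $\gms$ is recorded by the Maslov cover $\operatorname{cov}\colon\mathbb{R}T\surf^\grad\to\mathbb{P}T\surf$, each graded saddle connection lifts its tangent direction to a well-defined element of $\mathbb{R}T_p\surf\cong\mathbb{R}$ at every interior point $p$ on it, and this real number coincides with $\pi\,\phi_\sigma(M_i)$ after the standard identification given by the foliation $\grad$.

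Second, I would apply this pointwise at the interior intersection $p=\gamma_1(t_1)=\gamma_2(t_2)$. Since $p\in\surf^\circ$ and the intersection is transversal (our standing assumption on curves), the two lifts $\widetilde{\gamma_i}|_p\in\mathbb{R}T_p\surf$ are distinct modulo $\pi$. The definition of intersection index says that $i_p(\widetilde{\gamma_1},\widetilde{\gamma_2})=0$ precisely when
\[
    \widetilde{\gamma_2}|_p\in \bigl(\widetilde{\gamma_1}|_p,\ \widetilde{\gamma_1}[1]|_p\bigr)\subset\mathbb{R}T_p\surf\cong\mathbb{R},
\]
where the grading shift $[1]$ acts by adding $\pi$ in the $\mathbb{R}$-coordinate. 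Substituting the identifications $\widetilde{\gamma_i}|_p=\pi\,\phi_\sigma(M_i)$ from the previous step and dividing by $\pi$ yields exactly \eqref{eq:ineq}, with strict inequalities coming from transversality.

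The only delicate point is the compatibility between the phase $\phi_\sigma$ and the graded tangent lift at $p$, i.e.\ that the real number $\pi\,\phi_\sigma(M_i)$ really agrees with $\widetilde{\gamma_i}|_p$ rather than with some $\mathbb{Z}$-shift of it. This is the content of the parenthetical clause in Theorem~\ref{thm:HKKT} (\emph{up to} $2\pi\mathbb{Z}$, the angle equals $\pi$ times the phase): one must use that the HKK-correspondence is equivariant with respect to the grading shift $[1]$ on both sides, so the choice of lift of the saddle connection to a graded curve is exactly the choice pinning down the integer part of $\phi_\sigma$. Once this bookkeeping is in place, the lemma is immediate from the definition of the intersection index.
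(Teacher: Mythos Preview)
Your argument is correct in outline but takes a different route from the paper. The paper does \emph{not} identify $\widetilde{\gamma_i}|_p$ with $\pi\,\phi_\sigma(M_i)$ directly. Instead it argues categorically: from the index-$0$ intersection at $p$ one reads off (via the symmetry $i_p(\widetilde{\gamma_2},\widetilde{\gamma_1}[1])=0$ and Theorem~\ref{thm:IQZ}) that $\Hom(M_1,M_2)\neq 0\neq\Hom(M_2,M_1[1])$, and then semistability of both objects gives the weak inequality $\phi_\sigma(M_1)\le\phi_\sigma(M_2)\le\phi_\sigma(M_1)+1$. Strictness is obtained by a separate geometric observation: if the phases were equal (possibly after a shift), one could rotate the differential so that both $\gamma_i$ become horizontal leaves, and horizontal leaves do not cross in $\surf^\circ$.

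Your approach is more direct: you exploit that a saddle connection is a flat geodesic, so its graded tangent lift is constant and equal to $\pi\,\phi_\sigma(M_i)$ at every interior point, whence the open condition in the definition of intersection index gives \eqref{eq:ineq} immediately, strictness included. This is cleaner, but the ``delicate point'' you flag is exactly the cost: you need the pointwise identification $\widetilde{\gamma_i}|_p=\pi\,\phi_\sigma(M_i)$ (not just the ``up to $2\pi\ZZ$'' statement in Theorem~\ref{thm:HKKT}), which is implicit in the HKK construction but not stated in this paper. The paper's argument trades that bookkeeping for the softer input $\Hom\neq 0\Rightarrow\phi^-\le\phi^+$, at the price of needing a second step to upgrade $\le$ to $<$.
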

\begin{proof}
Since $\widetilde{\gamma_1}$ intersects $\widetilde{\gamma_2}$ at a point $p$ with index $0$,
$\widetilde{\gamma_2}$ intersects $\widetilde{\gamma_1}[1]$ at $p$ with index $0$.
By \eqref{eq:int} in Theorem~\ref{thm:IQZ}, we have
\[
    \Hom(M_1,M_2)\ne0\ne\Hom(M_2,M_1[1]).
\]
As $M_1$ and $M_2$ are both $\sigma$-semistable, we have
$\phi_\sigma(M_1) \leq  \phi_\sigma(M_2) \leq \phi_\sigma(M_1)+1$.
To get \eqref{eq:ineq}.
we need to rule out the possible equality.
Suppose that $\phi_\sigma(M_1[k]) =  \phi_\sigma(M_2)$ for $k\in\{0,1\}$.
After rotating an angle of $-\phi_\sigma(M_2)\cdot \pi$,
the underlying curves $\gamma_i$ become horizontal foliations.
But such foliations can not intersect in the interior of $\surf$, which is a contradiction.
\end{proof}

An immediate consequence is the following.

\begin{corollary}\label{cor:bb}
Let $M_1,M_2$ be two $\sigma$-semistable indecomposable objects with
corresponding graded curves $\widetilde{\gamma_i}$ on $\gms$, repsectively.
If $\phi_\sigma(M_2)-\phi_\sigma(M_1)\ge1$ with $\Hom(M_1,M_2)\neq0$,
then
\begin{itemize}
  \item either $\gamma_1$ and $\gamma_2$ intersect (and only intersect) at marked points in $\Y$,
  \item or $\gamma_1=\gamma_2$ that corresponds the same simple closed curve.
\end{itemize}
In the latter case,
$M_2$ is some shift of $M_1$ and
$\gamma_i$ corresponds to a ring domain in the foliation of $\Xi$ with angle $\phi_\sigma(M_i)\cdot \pi$.
\end{corollary}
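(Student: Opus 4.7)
The plan is to read off the corollary directly from Theorem~\ref{thm:IQZ} combined with Lemma~\ref{lem:bb}. The nonvanishing of $\Hom(M_1,M_2)=\Hom^0(M_1,M_2)$ forces, via the intersection formula \eqref{eq:int}, that $\qInt^0(\widetilde{\gamma_1},\widetilde{\gamma_2})\ge 1$. In the topological Fukaya setting, such an index-$0$ contribution can arise from one of three sources: (i) a transverse crossing of the graded curves in the interior $\surf^\circ$, (ii) a shared endpoint at a marked point of $\Y$ (when both arcs are open), or (iii) the local-system data attached to a common underlying simple closed curve. The hypothesis $\phi_\sigma(M_2)-\phi_\sigma(M_1)\ge 1$ rules out (i) immediately: an interior transverse index-$0$ crossing would, by Lemma~\ref{lem:bb}, force $\phi_\sigma(M_2)<\phi_\sigma(M_1)+1$, contradicting the standing assumption.

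Next I would upgrade the conclusion from ``no interior index-$0$ crossing'' to ``no interior crossing at all'' in the open-arc case, which is what the corollary asserts. An interior transverse crossing of $\widetilde{\gamma_1}$ and $\widetilde{\gamma_2}$ at some index $\rho\in\ZZ$ corresponds, after shifting the first grading by $\rho$, to an index-$0$ interior crossing between $\widetilde{\gamma_1}[\rho]$ and $\widetilde{\gamma_2}$; since $M_1[\rho]$ is still $\sigma$-semistable of phase $\phi_\sigma(M_1)+\rho$, Lemma~\ref{lem:bb} forces $\phi_\sigma(M_2)-\phi_\sigma(M_1)\in(\rho,\rho+1)$. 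Assuming the curves are in minimal position, and using that the surviving index-$0$ contribution must come from a marked endpoint (whose local combinatorics pins the integer part of $\phi_\sigma(M_2)-\phi_\sigma(M_1)$), these two constraints cannot be simultaneously realized. Hence $\gamma_1$ and $\gamma_2$ meet only at points of $\Y$.

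If instead no marked-point contribution is available (e.g.\ when one of the curves is closed and has no endpoint in $\Y$), the only remaining source of $\Hom^0\ne 0$ is (iii), so $\gamma_1=\gamma_2$ is a common simple closed curve. Applying Theorem~\ref{thm:HKKT}, its geodesic representative in the flat structure of $\Xi=\iota^{-1}(\sigma)$ must lie inside a ring domain of the foliation of $\xi$, with angle equal to $\pi\cdot\phi_\sigma(M_i)$; two $\sigma$-semistable objects sharing the same underlying closed curve differ only by a grading shift (a local-system twist does not alter the central charge or the angle), so $M_2=M_1[k]$ for some $k\in\ZZ_{\ge 1}$. The main technical obstacle I anticipate is the second paragraph: Lemma~\ref{lem:bb} directly addresses only index-$0$ interior crossings, so excluding interior crossings of \emph{arbitrary} index requires combining the geodesic nature of semistable representatives with the integer-valued combinatorics of intersection indices at $\Y$, and careful use of minimal position to reconcile the contributions.
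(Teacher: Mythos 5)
Your first paragraph and your treatment of the closed-curve case follow essentially the same route as the paper: by Theorem~\ref{thm:IQZ} the nonvanishing of $\Hom(M_1,M_2)$ in degree zero must be witnessed by an index-$0$ intersection, Lemma~\ref{lem:bb} shows this witness cannot be an interior crossing, and what remains is a marked-point intersection or a common simple closed curve, which lies in a ring domain of the foliation and hence has no self-intersection. Up to that point your argument and the paper's agree.

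The genuine gap is your second paragraph, i.e. exactly the ``(and only intersect)'' clause. The pivot of your argument --- that the index-$0$ contribution at a marked endpoint ``pins the integer part of $\phi_\sigma(M_2)-\phi_\sigma(M_1)$'' --- is false: at a point of $\Y$ an index-$0$ intersection imposes no upper bound on the phase difference, because the cone angle at a marked point can be arbitrarily large. This is precisely the mechanism that allows $\gldim>1$ at all; compare the choice of gradings $\widetilde{\eta_\pm^p}$ with $i_p(\widetilde{\eta_-^p},\widetilde{\eta_+^p})=0$ made before Proposition~\ref{pp:reach1}, and Example~\ref{ex:Apq}, where consecutive minimal arcs meet with index $0$ while their phases differ by $1+w/m>1$. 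Consequently the two constraints you juxtapose --- $\phi_\sigma(M_2)-\phi_\sigma(M_1)\in(\rho,\rho+1)$ coming from a hypothetical interior crossing of index $\rho$, and an index-$0$ intersection at a marked point --- are not in numerical conflict for the value of $\rho$ that the geometry actually produces, so ``cannot be simultaneously realized'' is an unproved assertion rather than a deduction; you acknowledge this yourself, but that means the strengthened conclusion is simply not established by your proposal. The paper, by contrast, dispatches all interior intersections in one sentence by appealing to Lemma~\ref{lem:bb} (and, in the application in Proposition~\ref{pp:reach1}, only the part you did prove --- that the degree-zero morphism is realized at a marked point --- is what gets used). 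A smaller remark: in the closed-curve case, ``a local-system twist does not alter the central charge, so $M_2=M_1[k]$'' is not a proof either, since non-isomorphic local systems give non-isomorphic objects on the same curve with the same phase; this matches the level of detail of the statement itself, but it is worth flagging.
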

\begin{proof}
The lemma above shows that there is no intersection between $\gamma_i$ in the interior of $\surf$.
If $\gamma_1\neq\gamma_2$ and they do not intersect,
then Theorem~\ref{thm:IQZ} implies that there is no $\Hom$ between $M_i$, which is a contradiction.
Thus, the only cases left is the ones listed in the corollary.
Note that in the latter case, $\gamma_i$ can not have self intersection since it is a foliation of a fixed angle.
\end{proof}

\begin{definition}
Consider the set $\T$ of all saddle connections of $\Xi$,
which corresponds to the set $\Ind\hh P$ of all $\sigma$-semistable indecomposable objects,
where $\sigma=(Z,\hh{P})$ is the stability condition that corresponds to $\Xi$.
Denote by $\core(\xi)$ the \emph{core} of $\xi$, which is the convex hull of $\T$.
\end{definition}
At each marked point $p\in\Y(\Xi)$, denote by $( \eta_-^p,\ldots,\eta_+^p )$
the set of all \emph{ungraded} saddle connections in clockwise order (with respect to $p$).
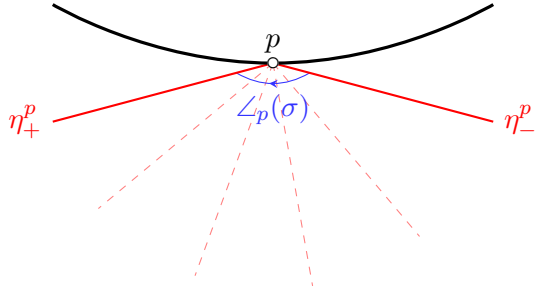
\begin{figure}[ht]\centering
\begin{tikzpicture}[xscale=-1,yscale=1]
\foreach \j in {-1,-4,2,5}
    {\draw[red!50,thin,dashed] (0,0) edge (-90+10*\j:3);}
\draw[->-=.55,>=stealth,blue!80](-165:.5)to[bend right]node[below]{$\angle_p(\sigma)$}(-15:.5);
\draw[thick,red](-15:3)node[left]{$\eta_+^p$}to(0,0)to(-165:3)node[right]{$\eta_-^p$};
\draw[very thick] plot [smooth,tension=1] coordinates {(165:3)(0,0)(15:3)};
\draw (0,0)\ww node[above]{$p$};
\end{tikzpicture}
\caption{Max angle}\label{fig:max}
\end{figure}

\begin{proposition}\label{cor:bb2}
The core $\core(\xi)$ and all $\eta_\pm^p$ are well defined.
\end{proposition}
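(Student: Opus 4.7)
The plan is to exploit the dictionary between saddle connections and semistable objects in Theorem~\ref{thm:HKKT} together with the non-intersection result of Corollary~\ref{cor:bb}, using finiteness of $\gldim\sigma$ as the quantitative input. First I would identify the set of (ungraded) saddle connections emanating from a fixed closed marked point $p\in\Y$ with a subset of the shift orbits $\Ind\hh{P}/[\ZZ]$: under the bijection $X$ of Theorem~\ref{thm:IQZ}, each such saddle connection $\eta$ corresponds to a $\sigma$-semistable indecomposable object $M_\eta$ whose graded representative curve terminates at $p$, and lifting to the graded tangent bundle $\MTS^\grad$ one has that $\pi$ times $\phi_\sigma(M_\eta)$ equals the graded angular direction of $\eta$ at $p$.

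Second, I would show the set of graded angular directions at $p$ coming from $\T$ is bounded in $\mathbb{R}T_p\surf\cong\R$. Pick one reference saddle connection $\eta_0$ through $p$ with corresponding semistable $M_0$. For any other saddle connection $\eta$ with semistable $M$, Corollary~\ref{cor:bb} forces $\eta_0$ and $\eta$ to intersect only at marked points (or to represent a shift of each other), and a transversal intersection at the marked point $p$ produces a non-zero morphism in $\Hom^{k}(M_0,M)$ or $\Hom^{k}(M,M_0)$ for some shift $k$ determined by the intersection index at $p$. Hence $|\phi_\sigma(M)-\phi_\sigma(M_0)|\le \gldim\sigma$ after possibly shifting $M$, which directly bounds the angular spread at $p$ by $\pi(1+\gldim\sigma)$.

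Third, I would upgrade boundedness to attainment of the supremum and infimum. Since saddle connections are geodesics of the flat metric induced by $\xi$, the set of admissible tangent directions at $p$ is locally discrete: distinct saddle connections through $p$ cannot share the same graded direction (they would coincide as geodesics). In a bounded angular window, standard compactness for geodesics of a meromorphic quadratic differential with singularities of exponential type shows that a limit of saddle connections is either again a saddle connection or bounds a ring domain; in both cases the limiting direction is realised by an element of $\T$. This gives closedness of the direction set, so the extreme directions are attained by actual saddle connections $\eta_-^p$ and $\eta_+^p$. Well-definedness of $\core(\xi)$ then follows immediately: the union of all saddle connections is a closed subset of $\surf$ with bounded angular behaviour at every marked point, so its convex hull in the flat metric is a well-defined closed region.

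The principal obstacle is the last step---proving that a limit of saddle connections in a bounded angular cone at $p$ is still represented by a semistable object (equivalently, a saddle connection). A priori one might worry about a sequence of saddle connections whose other endpoints escape to infinity on the universal cover, yielding a limit geodesic that fails to close up to a marked point. Ruling this out requires combining the exponential-type local model near singularities (which controls how many sheets can be visited) with the Theorem~\ref{thm:HKKT} bijection (limits correspond to limits of semistables, which exist thanks to the support property built into the definition of $\Stab\DS$). Once this compactness input is established, the rest of the argument is bookkeeping.
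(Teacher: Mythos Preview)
Your approach differs substantially from the paper's and contains genuine gaps.

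The paper's proof is a two-line citation: by \cite[Prop.~2.2]{HKK}, the core $\core(\xi)$ is the union of finitely many saddle connections and flat triangles cut out by any maximal geodesic arc system $\mathbf{A}$, and is independent of the choice of $\mathbf{A}$. Since $\eta_\pm^p$ must lie on the boundary of this finite complex near $p$, they are among the finitely many edges and hence exist. No categorical input, no compactness of families of geodesics, no use of $\gldim$ is required.

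Your argument, by contrast, runs through the stability side and has three problems. First, it is circular: in step~2 you bound the angular spread at $p$ by $\pi(1+\gldim\sigma)$, implicitly assuming $\gldim\sigma<\infty$. At this point in the paper that has not been shown; indeed Proposition~\ref{pp:reach1} uses the present proposition (the existence of $\eta_\pm^p$ and the finiteness of $\max\angle\core(\xi)$) to prove that $\gldim\sigma$ equals a finite maximum and is reachable. Second, you misinvoke Corollary~\ref{cor:bb}: its hypothesis is $\phi_\sigma(M_2)-\phi_\sigma(M_1)\ge 1$, so it does not exclude interior intersections of two saddle connections in general (Lemma~\ref{lem:bb} covers those, but then the phase difference is already $<1$ and you get no global bound without knowing $\gldim\sigma<\infty$). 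Third, the compactness step you flag as the ``principal obstacle'' is genuinely unproven in your outline: the support property controls central charges relative to a norm on $K(\D)$, not convergence of geodesics in an angular cone, and ``standard compactness for geodesics'' of exponential-type quadratic differentials is precisely the content one would need to supply. The HKK structural result is what establishes this finiteness directly on the geometric side, making the detour through $\Stab$ unnecessary.
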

\begin{proof}
By \cite[Prop.~2.2]{HKK}, $\core(\xi)$ is the union of finite saddle connections and triangles cut out by $\mathbf{A}$,
for any maximal geodesic arc system $\mathbf{A}$.
Moreover, it is well-defined and independent of the choice of $\mathbf{A}$.
Consider the boundaries of these triangles together with the saddle connections (for any chosen $\mathbf{A}$),
we see that $\eta_\pm^p$ must be among them.
\end{proof}

\begin{remark}
It is possible that there are infinite many saddle trajectories coming out of a marked point $p\in\Y(\Xi)$,
e.g. in the annulus case of \S~\ref{sec:Apq}, cf. Figure~\ref{fig:wind around}.
However, there are still leftmost/rightmost saddle trajectories bounding all of them.
For instance, the orange loop in Figure~\ref{fig:wind around}, which corresponds to (some shifts of) a skyscrpter sheaf in the usual Kronecker case. See Example~\ref{ex:K2} for more details in the Kronecker case.
\begin{figure}[h]\centering
\begin{tikzpicture}[scale=.9,arrow/.style={->,>=stealth,thick}]
\begin{scope}[shift={(0,0)}]
\draw[very thick,fill=gray!11] (0,0) circle (.5);\draw[ultra thick](0,0) circle (3);
\draw[red, very thin]
    (0,3).. controls +(230:2) and +(180:1.5) ..(0,-.5);
\draw[red, thin]
    (0,3).. controls +(220:3) and +(180:2) ..(0,-1.4)
    arc(-90:90:1.2)(0, 1)arc(90:270:.75)(0,-.5);
\draw[red,thick,dashed]
    (0,3).. controls +(210:3) and +(180:3) ..(0,-2)arc(-90:90:1.8)(0,1.6)arc(90:270:1.3)(0,-1);
\draw[orange, ultra thick]
    (0,3).. controls +(200:4) and +(180:3) ..(0,-2.75)
    (0,3).. controls +(-20:4) and +(0:3) ..(0,-2.75);
\foreach \j in {0} {\draw[red,thick,dashed] (120*\j+90:3)\ww (120*\j-90:.5)\ww ;}
\end{scope}
\end{tikzpicture}
\caption{Infinite saddles at a closed marked point}\label{fig:wind around}
\end{figure}
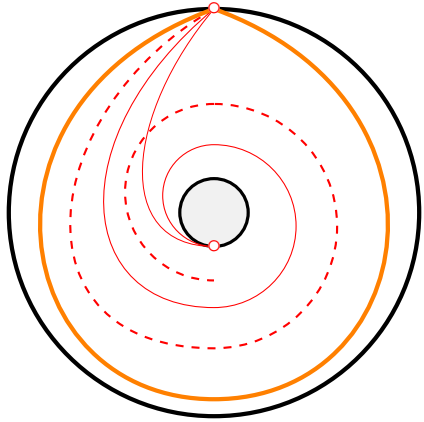
\end{remark}

Choose a grading $\widetilde{\eta_\pm^p}$ for both of them so that the intersection indexes are zero, i.e.
$i_p(\widetilde{\eta_-^p},\widetilde{\eta_+^p})=0$.
Note that $\eta_\pm^p$ may be the two endpoints of the same arc,
and in such a case, their graded version $\widetilde{\eta_\pm^p}$ may still differ by shifts.
Let $M_\pm^p$ be the $\sigma$-semistable object corresponding to $\widetilde{\eta_\pm^p}$
with proper shifts, such that the intersection of $\eta_\pm^p$ at $p$ induces a non-zero homomorphism in $\Hom(M_-^p,M_+^p)$.
Denote by
\[
    \angle_p\core(\sigma)=\phi_\sigma(M_+^p)-\phi_\sigma(M_-^p).
\]
Note that $M_\pm^p$ are only well-defined up to some shifts simultaneously
but $\angle_p\core(\sigma)$ is independent of such shifts.

Now we can describe a formula for $\gldim$ under certain conditions.

\begin{proposition}\label{pp:reach1}
If $\gldim\sigma>1$, then $\sigma$ is $\gldim$-reachable and
\begin{gather}\label{eq:reach1}
  \gldim\sigma=\max\angle\core(\xi)\colon=\max_{p\in\Y}\{ \angle_p\core(\sigma) \}.
\end{gather}
Moreover, if $\gldim\sigma\notin\ZZ$,
then any pair $(M_1,M_2)$ in \eqref{eq:CP} corresponds to an angle of $\core(\xi)$.
\end{proposition}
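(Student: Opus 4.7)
The plan is to bound $\gldim\sigma$ from above and below by $\max\angle\core(\xi)$, with the lower bound realized by an explicit pair that simultaneously establishes reachability.

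For the upper bound, take $\sigma$-semistable indecomposables $M_1,M_2$ with $\Hom(M_1,M_2)\ne 0$ and phase difference approaching $\gldim\sigma$. Since $\gldim\sigma>1$, eventually $\phi_\sigma(M_2)-\phi_\sigma(M_1)>1$, so Corollary~\ref{cor:bb} gives a dichotomy: either (a) the graded curves coincide as a ring domain (hence $M_2=M_1[k]$ and the phase difference lies in $\ZZ$), or (b) $\gamma_1,\gamma_2$ share a marked endpoint $p\in\Y$ and meet nowhere else. In case (b), Theorem~\ref{thm:HKKT} identifies the angle at $p$ between the two saddle connections with $\pi(\phi_\sigma(M_2)-\phi_\sigma(M_1))$, while maximality of $\eta_\pm^p$ among saddles at $p$ bounds this by $\pi\cdot\angle_p\core(\sigma)\le\pi\cdot\max\angle\core(\xi)$. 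Case (a) contributes only integer differences, which I would control by observing that the $\Ext$-groups of a closed-curve object on $\gms$ are concentrated in low degrees, so ring domains by themselves cannot force $\gldim\sigma$ beyond what case (b) already produces.

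For the lower bound with reachability, let $p\in\Y$ achieve $\angle_p\core(\sigma)=\max\angle\core(\xi)$, which exists by Proposition~\ref{cor:bb2} and finiteness of $\Y$. The zero-index intersection of $\widetilde{\eta_-^p}$ and $\widetilde{\eta_+^p}$ at $p$ produces a nonzero morphism in $\Hom(M_-^p,M_+^p)$ via~\eqref{eq:int}, and by the normalization chosen for $M_\pm^p$ one has $\phi_\sigma(M_+^p)-\phi_\sigma(M_-^p)=\angle_p\core(\sigma)$. Hence $\gldim\sigma\ge\max\angle\core(\xi)$ and this supremum is attained, yielding the reverse inequality together with reachability.

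For the \emph{moreover} part, take $(M_1,M_2)\in\CP{\sigma}$. Either the assumption $\gldim\sigma\notin\ZZ$ or, equivalently, the shift-orbit condition $M_1[\ZZ]\ne M_2[\ZZ]$ built into~\eqref{eq:CP} excludes case (a). Case (b) then forces the corresponding saddle connections to share a common marked endpoint $p\in\Y$, and the chain
$$\gldim\sigma=\phi_\sigma(M_2)-\phi_\sigma(M_1)\le\angle_p\core(\sigma)\le\max\angle\core(\xi)=\gldim\sigma$$
collapses to equalities; maximality of $\eta_\pm^p$ then forces $\{M_1,M_2\}=\{M_-^p,M_+^p\}$ up to grading shifts, which is exactly what it means to correspond to an angle of $\core(\xi)$ at $p$. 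The main obstacle will be the careful bookkeeping of grading shifts, so that the intersection produced at $p$ lands in $\Hom(M_-^p,M_+^p)$ (in the correct direction and without spurious shifts), and so that the ring-domain case (a) is genuinely harmless even when $\gldim\sigma$ happens to be an integer greater than $1$.
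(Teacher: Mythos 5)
Your plan follows the same two-sided strategy as the paper's proof: the lower bound and reachability come from the extremal saddle connections $\eta_\pm^p$ at each marked point, whose index-zero intersection at $p$ produces a nonzero morphism between semistables with phase difference $\angle_p\operatorname{Core}(\sigma)$, and the upper bound comes from feeding pairs of large phase difference into Corollary~\ref{cor:bb}; the ``moreover'' part is also handled the same way, by using the shift-orbit condition in \eqref{eq:CP} to exclude the ring-domain branch. The one substantive divergence is your treatment of that ring-domain branch (your case (a)) in the upper bound. You propose to neutralize it by asserting that closed-curve objects have self-extensions concentrated in low degrees; this is true for these categories (band objects), but it is not available from the results quoted in the paper --- Theorem~\ref{thm:IQZ} gives the intersection--$\Hom$ correspondence only for non-closed curves --- so as written this step is an unproven auxiliary input and the weakest point of your plan. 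The paper sidesteps the issue entirely: since $x=\gldim\sigma>1$ is a supremum, one chooses $\epsilon$ with $(x-\epsilon,x)\cap\ZZ=\emptyset$ and samples an \emph{achieved} phase difference $y\in(x-\epsilon,x)$; then $y\notin\ZZ$ already forces $M_1\notin M_2[\ZZ]$, so only your case (b) can occur, giving $y\le\angle_p\operatorname{Core}(\sigma)$ for some $p\in\eta_1\cap\eta_2\subset\Y$ and hence the upper bound in the limit. If you adopt this sampling device, your case (a) never arises and no Ext computation for closed-curve objects is needed. Finally, your concluding step in the ``moreover'' part, that equality forces $\{M_1,M_2\}$ to coincide (up to shift) with the extremal pair $\{M_-^p,M_+^p\}$, is slightly stronger than what is claimed (the paper only concludes that the pair intersects at a marked point and hence corresponds to an angle of the core); it is fine provided you justify that distinct saddle connections leave $p$ in distinct directions, so equality of angles pins down the pair.
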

\begin{proof}
Firstly, consider the case when $\gldim>1$.
Let $x=\gldim\sigma$.
For any $0<\epsilon\ll1$ such that
\begin{gather}\label{eq:noZZ}
\big(x-\epsilon,x\big)\cap \ZZ=\emptyset,
\end{gather}
take any $y\in\big(x-\epsilon,x\big)$ which is achieved by
\[
    y=\phi_\sigma(M_2)-\phi_\sigma(M_1)
\]
for some indecomposable objects $M_1, M_2$.
Then $M_1$ is not the shift of $M_2$ as $y\notin\ZZ$.
By Corollary~\ref{cor:bb}, we deduce that
$M_1, M_2$ correspond to the graded curves $\widetilde{\eta_1}, \widetilde{\eta_2}$
which connect marked points and only intersect at marked points/endpoints.
This implies that $y\le \angle_p\core(\sigma)$ for some $p\in \eta_1\cap \eta_2 \subset \Y$.
Thus, we have
\[
    \gldim\sigma\le \max\angle\core(\xi)
\]
and clearly the $\max\angle\core(\xi)$ is reachable.

Finally, the condition $M_1[\ZZ]\ne M_2[\ZZ]$ in \eqref{eq:CP} says that $M_1$ is not a shift of $M_2$
and the deduction above also implies that when $x=\phi_\sigma(M_2)-\phi_\sigma(M_1)$,
the corresponding curves $\eta_i$ intersect at a point in $\Y$.
Thus this pair corresponds to an angle of $\core(\xi)$.
\end{proof}

\begin{proposition}\label{pp:reach2}
Suppose that $\gldim\sigma\le1$.
Then $\sigma$ is $\gldim$-reachable and \eqref{eq:reach1} still holds.
\end{proposition}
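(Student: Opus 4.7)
The plan is to extend the argument of Proposition~\ref{pp:reach1} to the regime $\gldim\sigma\le 1$, where Corollary~\ref{cor:bb} no longer applies directly. As before, the lower bound $\gldim\sigma\ge\max_p\angle_p\core(\sigma)$ is immediate: for each marked point $p\in\Y$, the extremal saddle connections $\widetilde{\eta_\pm^p}$ correspond to semistable objects $M_\pm^p$ whose index-$0$ intersection at $p$ induces a nonzero morphism $M_-^p\to M_+^p$, so $\angle_p\core(\sigma)=\phi_\sigma(M_+^p)-\phi_\sigma(M_-^p)\le\gldim\sigma$. The content of the proposition is therefore the reverse inequality together with reachability.

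For $\gldim\sigma<1$, I would invoke \cite[Prop.~3.5]{Q1} to conclude that $\sigma$ is totally stable and automatically $\gldim$-reachable: there exist stable indecomposables $M_1,M_2$ (necessarily not in the same shift orbit, since $\phi_2-\phi_1<1$) with nonzero Hom realizing the supremum. By Theorem~\ref{thm:IQZ}, this morphism arises from an index-$0$ intersection of the corresponding graded arcs $\widetilde{\gamma_1},\widetilde{\gamma_2}$. By Lemma~\ref{lem:bb}, this intersection is either at a shared marked point, in which case $\phi_2-\phi_1\le\angle_p\core(\sigma)$ immediately, or in the interior of $\surf$. In the interior case I would transport the intersection to a marked point without decreasing the phase difference, by following both arcs along the foliation to a common marked endpoint and comparing phases there.

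For $\gldim\sigma=1$: if the supremum is realized by a non-shift pair, then Corollary~\ref{cor:bb} applies directly (the hypothesis $\phi_2-\phi_1\ge 1$ is met with equality), so the conclusion follows as in Proposition~\ref{pp:reach1}. If instead it is realized only by shift pairs coming from ring domains, I would perturb $\sigma$ slightly into $\Stab_{>1}\D(\gms)$, apply Proposition~\ref{pp:reach1} to the perturbation, and pass to the limit using continuity of $\gldim$ together with stability of the extremal saddle-connection structure under small deformations of the framed quadratic differential.

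The hard part will be the interior-intersection subcase when $\gldim\sigma<1$: one must rule out that the optimal pair corresponds to arcs meeting only in the interior with a phase difference strictly larger than every marked-point angle. A clean alternative, in the spirit of how Theorem~\ref{thm:KOT} is invoked elsewhere in this paper, is to reduce via the classification to $\D=\D_\infty(Q)/\iota$ for a Dynkin quiver $Q$ and verify the formula directly from the polygon description in Proposition~\ref{pp:A}, where every angle between stable objects literally is an angle of a convex polygon at one of its vertices (a marked-point image under $\mathfrak{Z}$).
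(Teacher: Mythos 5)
Your handling of the case $\gldim\sigma<1$ is acceptable: the ``clean alternative'' you offer (reduce via Theorem~\ref{thm:KOT} to the Dynkin situation, which forces $\gms$ to be a disk and $\DS\cong\D_\infty(A_n)$, then read off reachability from finiteness and \eqref{eq:reach1} from the polygon picture of Proposition~\ref{pp:A}) is essentially the paper's argument, and it is indeed the right way to bypass the interior-intersection difficulty that you correctly flag in your primary route. The genuine gap is in the case $\gldim\sigma=1$. First, your dichotomy ``supremum realized by a non-shift pair / realized only by shift pairs'' presupposes that the supremum defining $\gldim\sigma$ is attained, which is precisely the reachability statement to be proved; a priori the phase differences could merely accumulate at $1$. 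Second, your fallback for the shift-pair situation --- perturb $\sigma$ slightly into $\Stab_{>1}\DS$ and pass to the limit --- is not available in general: the level set $\gldim^{-1}(1)$ typically has nonempty interior (for instance in the annulus/Kronecker case, where $\PStab_{=1}$ is a two-dimensional region, cf.\ Figure~\ref{fig:logo2}), so for such $\sigma$ no small perturbation increases $\gldim$ at all. Even when such a perturbation $\sigma_t$ exists, continuity of $\gldim$ only controls the value, not the maximizing configuration: the realizing saddle connections of $\sigma_t$ may break, and the corresponding objects may become strictly semistable or cease to be semistable in the limit, so neither attainment at $\sigma$ nor the identification of the limit with an angle of the core follows from what you wrote.

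What closes this case in the paper, and is absent from your proposal, is a constructive realization. By \cite[Prop.~3.5]{Q1}, $\gldim\sigma=1$ makes $\sigma$ totally semistable. Pick a boundary component $\partial_0$ with winding number $w$, a marked point $p\in\partial_0\cap\Y$, and for each $k\ge1$ the loop $\gamma_k$ based at $p$ winding $k$ times around $\partial_0$ (Figure~\ref{fig:wind around}); it has a self-intersection of index $1+kw$, so by \eqref{eq:int} the corresponding indecomposable $M_k$ satisfies $\Hom(M_k,M_k[1+kw])\neq0$. Semistability of $M_k$ forces $1+kw\ge0$ for all $k$, hence $w\ge0$, while $\gldim\sigma=1$ forces $1+kw\le1$, hence $w\le0$; so $w=0$, and already $\gamma_1$ gives $\Hom(M_1,M_1[1])\neq0$, realizing $\gldim\sigma=1$ by an angle of the core at $p$. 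This proves attainment and \eqref{eq:reach1} simultaneously, with no limiting argument. You would need this (or an equivalent explicit realization) to make the $\gldim\sigma=1$ case go through.
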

\begin{proof}
If $\gldim\sigma<1$,
Theorem~\ref{thm:KOT} says that this happens if and only if $\DS$ is
of the form $\DQ/\iota$ for a Dynkin quiver $Q$.
This will force $\surf$ being a disk and $Q$ being an $A_n$ quiver (with $\iota=\id$).
Then $\sigma$ is $\gldim$-reachable due to finiteness of the category.

Next, consider the case $\gldim\sigma=1$ and we can exclude the disk case as above.
By \cite[Prop.~3.5]{Q1}, $\sigma$ is totally semistable, i.e. any indecomposable object $M$ is $\sigma$-semistable.
Take any boundary component $\partial_0$ with winding number $w$
and let $p\in\partial_0\cap\Y(\xi)$. So there is a loop $\gamma_k$ (non-trivial since $\gms$ is not a disk), for any $k\ge1$,
based at $p$ and go around $\partial_0$ for $k$ times, with a self-intersection of index $1+kw$.
See the orange loop for $\gamma_1$ in Figure~\ref{fig:wind around}, where $\partial_0$ is the outer boundary.
Let $M_k$ be the indecomposable object corresponding to some graded version of $\gamma_k$.
So by \eqref{eq:int} we have $\Hom(M_k,M_k[1+kw])\neq0$.
As $M_k$ is semistable, we have $1+kw\ge 0$ and hence $w\ge0$.
But $\gldim=1$ forces $1+kw\le1$, i.e. $w\le0$.
Thus, $w=0$ and $\gldim$ is achieved by $M_1$ and $M_1[1]$ corresponding to an angle at $p$.
Note that in this case $\gldim$ is also achieved by a $\kk\mathbb{P}^1$-family of objects.
\end{proof}

Combing the propositions above,
we know that any stability condition on $\DS$ is $\gldim$-reachable.
\begin{corollary}\label{cor:reach2}
Any $\sigma\in\Stab\DS$ is $\gldim$-reachable.
\end{corollary}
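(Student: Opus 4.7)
The plan is essentially a packaging step: combine Proposition~\ref{pp:reach1} and Proposition~\ref{pp:reach2}. The second proposition handles every $\sigma$ with $\gldim\sigma\le 1$ (splitting off the Dynkin $A_n$-disk case from the totally semistable case with $\gldim\sigma=1$), and the first proposition handles every $\sigma$ with $1<\gldim\sigma<\infty$ via the identification $\gldim\sigma=\max\angle\core(\xi)$. So the only question left is whether any $\sigma\in\Stab\DS$ can have $\gldim\sigma=+\infty$.

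To rule out $\gldim\sigma=+\infty$, I would rerun the argument at the beginning of Proposition~\ref{pp:reach1}: take the quadratic differential $\Xi=(\xx,\xi,\psi)=\iota^{-1}(\sigma)$ furnished by Theorem~\ref{thm:HKKT}. For any finite value $y>1$ realized as $y=\phi_\sigma(M_2)-\phi_\sigma(M_1)$ with $\Hom(M_1,M_2)\ne 0$, Corollary~\ref{cor:bb} (applied, if necessary, to $y'\in(y-\epsilon,y)\setminus\ZZ$ as in the proof of Proposition~\ref{pp:reach1}) forces the underlying graded curves to meet only at a marked point $p\in\Y$, and then $y\le\angle_p\core(\sigma)$. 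Proposition~\ref{cor:bb2} together with \cite[Prop.~2.2]{HKK} says that $\core(\xi)$ is the union of \emph{finitely many} saddle connections and triangles, so $\max\angle\core(\xi)$ is a finite real number. Therefore every such $y$ is bounded above by this finite quantity, whence
\[
\gldim\sigma\le\max\angle\core(\xi)<+\infty
\]
once $\gldim\sigma>1$, and the case $\gldim\sigma=+\infty$ cannot occur.

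With this finiteness in hand, the trichotomy $\gldim\sigma<1$, $\gldim\sigma=1$, $\gldim\sigma>1$ is exhaustive and each case is already treated, and in each case the supremum in the definition of $\gldim$ is attained — by totally stable indecomposables in the Dynkin case, by a loop going once around a boundary component of winding number zero in the $\gldim\sigma=1$ case, and by the pair $(M_-^p,M_+^p)$ realizing $\max\angle\core(\xi)$ in the $\gldim\sigma>1$ case. I do not foresee a genuine obstacle: the substantive geometric work (controlling intersections of semistable curves by Lemma~\ref{lem:bb} and Corollary~\ref{cor:bb}, finiteness of the core, and the Dynkin classification Theorem~\ref{thm:KOT}) has been done upstream, and the corollary is just the case-by-case conclusion.
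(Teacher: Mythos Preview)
Your proposal is correct and matches the paper's approach: the corollary is simply the union of Proposition~\ref{pp:reach1} and Proposition~\ref{pp:reach2}. Your extra care ruling out $\gldim\sigma=+\infty$ is fine but already contained in Proposition~\ref{pp:reach1}, whose conclusion $\gldim\sigma=\max\angle\core(\xi)$ (a maximum over finitely many marked points) forces finiteness whenever $\gldim\sigma>1$.
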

\subsection{Cycles of saddle connections and critical values}

\begin{proposition}\label{pp:min arc}
Suppose that $\gldim\sigma$ is reached at $p_1$ and $p_2$, i.e.
\[
    \gldim\sigma=\angle_{p_j}\core(\sigma)=\phi_\sigma(M_+^{p_j})-\phi_\sigma(M_-^{p_j}),\quad j=1,2.
\]
Let $\eta_\pm^{p_j}$ be the arcs corresponding to $M_\pm^{p_j}$.
If $\eta_+^{p_1}=\eta_-^{p_{2}}$,
then $\eta_+^{p_1}$ is a minimal arc (cf. Section~\ref{sec:Apq}).
\end{proposition}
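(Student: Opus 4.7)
The plan is to exhibit a region $R\subset\surf$ on one side of $\eta := \eta_+^{p_1} = \eta_-^{p_2}$, bounded by $\eta$ together with arcs of $\partial\surf$, and to argue via the flat structure induced by $\xi$ that $R$ is a disk whose only marked points are $p_1$ and $p_2$, which by definition makes $\eta$ a minimal arc.

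First I would locate $R$. The arc $\eta$ is a saddle connection of $\xi$ with endpoints $p_1, p_2$; orient it from $p_1$ to $p_2$. Since $\eta = \eta_+^{p_1}$ is the clockwise-most saddle at $p_1$, on the clockwise side of $\eta$ near $p_1$ there is no other saddle from $p_1$, and this local side is bounded by $\eta$ together with the boundary arc of $\surf$ immediately clockwise of $\eta$ at $p_1$. Similarly, $\eta = \eta_-^{p_2}$ being the counterclockwise-most saddle at $p_2$ gives the analogous statement at $p_2$ on the counterclockwise side. Using the orientation of $\surf$, these two local sides belong to a common global side of $\eta$; let $R$ be the corresponding component of $\surf\setminus\eta$.

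Next I would show $\overline{R}$ contains no marked point of $\Y$ besides $p_1$ and $p_2$. If $q\in\overline{R}\setminus\{p_1,p_2\}$ were a marked point, straightening any path in $\overline{R}$ from $p_1$ to $q$ in the flat metric of $\xi$ produces a concatenation of saddle connections whose first segment $\gamma$ starts at $p_1$ strictly on the clockwise side of $\eta$ and ends at some marked point $q'\in\overline{R}\setminus\{p_1\}$. Whether or not $q'=p_2$, one checks that $\gamma$ is a saddle at $p_1$ strictly clockwise of $\eta=\eta_+^{p_1}$ (if $q'=p_2$ then $\gamma\ne\eta$ since $\gamma$ lies strictly inside $R$), contradicting the maximality of $\eta_+^{p_1}$. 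The boundary of $\overline{R}$ therefore consists of $\eta$ together with a portion of $\partial\surf$ carrying no marked points besides $p_1, p_2$; since every boundary component of $\surf$ contains at least one marked point, $\overline{R}$ cannot swallow a full boundary component, so this portion must be a single boundary segment joining $p_1$ and $p_2$ on a common boundary component $\partial_0$ of $\surf$, forcing $p_1, p_2$ to be adjacent on $\partial_0$. A parallel argument rules out any non-contractible loop in $\overline{R}$ (such a loop would either enclose a missing boundary component of $\surf$ or introduce genus forcing a singularity of $\xi$ inside $R$), so $\overline{R}$ is a disk and $\eta$ is isotopic to the boundary segment between $p_1$ and $p_2$, i.e.\ a minimal arc.

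The main obstacle is justifying the geodesic-straightening step in the presence of the exponential-type singularities at closed marked points, where the flat metric locally covers the Riemann surface of $\log z$: one must ensure that the straightened geodesic genuinely remains inside $\overline{R}$ and that its first segment is a bona fide saddle connection emanating strictly clockwise of $\eta$ at $p_1$, so that the contradiction with $\eta=\eta_+^{p_1}$ is legitimate.
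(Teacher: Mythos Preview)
Your overall strategy---finding a region on one side of $\eta$ that contains no marked points other than $p_1,p_2$---is the right one, but the way you carve out that region has a genuine gap. You set $R$ to be ``the corresponding component of $\surf\setminus\eta$''. A priori $\eta$ need not separate $\surf$: the two endpoints $p_1,p_2$ might lie on different boundary components, or on the same one but with $\eta$ non-separating because of genus or extra boundary components. In that case $\surf\setminus\eta$ is connected, $R=\surf\setminus\eta$, and $\overline{R}$ contains \emph{every} marked point. Your straightening step then fails: a path in $\overline{R}$ from $p_1$ to some $q$ can leave $p_1$ on the anticlockwise side of $\eta$, and even if you insist it leave on the clockwise side, the straightened geodesic in $\surf$ may cross $\eta$ and end up on the other side, so its first segment need not be a saddle strictly clockwise of $\eta$ at $p_1$. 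The assertion ``these two local sides belong to a common global side of $\eta$'' already presupposes that $\eta$ separates, which is part of the conclusion you are trying to establish.

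The paper sidesteps this by \emph{building} the region rather than selecting a component. It takes the horizontal strip decomposition of $\xi$ and looks at all strips meeting $\eta$. On one side of $\eta$ the strip boundaries carry the other marked points and their saddle connections (a broken geodesic from $p_1$ to $p_2$); on the opposite side every strip is unbounded, and its infinite end runs to an open marked point. The first and last strips are bounded by the horizontal separatrices from $p_1$ and $p_2$ on the clockwise and anticlockwise side of $\eta$ respectively, and those separatrices cannot hit a closed marked point (that would give a saddle beyond $\eta_+^{p_1}$ or before $\eta_-^{p_2}$), so they run to the open marked points adjacent to $p_1$ and $p_2$ on their own boundary components. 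Since the infinite ends of the whole stack of strips converge to a single open marked point, those two adjacent open marked points coincide, forcing $p_1,p_2$ onto the same boundary component and adjacent there. This argument never asks whether $\eta$ separates; the strip decomposition hands you a simply connected region bounded by $\eta$ and a boundary segment directly.

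The obstacle you flagged---making straightening rigorous near the exponential-type singularities---is a legitimate technical worry, but it is secondary. Even granting a perfect straightening lemma, your argument does not go through in the non-separating case. If you want to rescue your approach, you would need to replace ``component of $\surf\setminus\eta$'' by an explicitly constructed region (for example the union of geodesic rays from $p_1$ in the wedge clockwise of $\eta$, or the half-strips on one side of $\eta$ as in the paper), and then your marked-point and loop arguments can be run inside that region.
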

\begin{proof}
Consider the arc $\eta=\eta_+^{p_1}=\eta_-^{p_{2}}$.
Any geodesics starting from $p_1$ that is on the right hand side (clockwise side with respect to $p_1$)
can not end at a closed marked point (infinity order zero) since $\eta_+^{p_1}$ is the rightmost saddle connection.
Therefore, they can only end at the boundary where $p_1$ lives.
Similarly, any geodesics starting from $p_{2}$ that is on the left hand side (anticlockwise side with respect to $p_2$)
can not end at a closed marked point.
Hence, they can only end at the boundary where $p_{2}$ lives.

Take all horizonal strips that intersect $\eta$.
They must have finite height and the saddle connections of these strips
form a broken geodesic connecting $p_1$ and $p_2$ (dashed line segment in Figure~\ref{fig:strip}).
They will be on the right hand side of $\eta$ when walking from $p_2$ to $p_1$.
Therefore on the other/left hand side, the infinities of these strips tend
to an infinity order pole/open marked point on some boundary of $\surf$.
By the discussion above, such an open marked point is the boundary where both $p_{1}$ and $p_{2}$ live.
See Figure~\ref{fig:strip}.
\begin{figure}[h]\centering
\begin{tikzpicture}[rotate=180-144-18,xscale=-.8,yscale=.8,arrow/.style={->,>=stealth,thick}]
\foreach \j in {2}
    {\draw[very thick]plot [smooth,tension=1.2] coordinates { (72*\j+105:3.5)(72*\j+90:3)(72*\j+75:4.5)};
    \draw(-72*\j+90-144:3.5)node[black]{$p_\j$};}
\foreach \j in {1}
    {\draw[very thick]plot [smooth,tension=1.2] coordinates { (72*\j+105:4.5)(72*\j+90:3)(72*\j+75:3.5)};
    \draw(-72*\j+90-144:3.5)node[black]{$p_\j$};}
\draw[red,thick,dashed, thick](72*2+18:3)\ww to (0,0)\ww to(-1,-1)\ww to (72*2+90:3)\ww;
\foreach \j in {-20,-15,...,25}
    {\draw[cyan!30,thick](72*2+54+180+45:3)to[bend left=\j](72*2+54:4.5);
    \draw[cyan!30,thick](0,-2)to[bend left=-\j](72*2+54:4.5);}
\foreach \j in {1,2,3}{
    \draw[red,thick] (72*\j+18:3)\ww to[bend left=0] (72*\j+90:3)\ww;}
\draw[white,fill=white](72*2+54+180+45:3)circle(.2)(72*2+54:4.5)circle(.2)(0,-2)circle(.2);\draw[cyan](72*2+54:4.5)\nn;
\draw[red](72*2+54:2.5)node[above right]{\footnotesize{$\eta_+^{p_1}=\eta_-^{p_2}$}};

\draw[red](72+55:2.8)node{\footnotesize{$\eta_+^{p_2}$}};
\draw[red](72*3+55:2.8)node{\footnotesize{$\eta_-^{p_1}$}};
\end{tikzpicture}
\caption{Horizontal strips containing the saddle connection $\eta$}\label{fig:strip}
\end{figure}
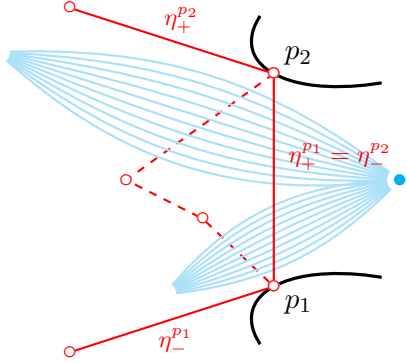
Thus we have shown that $p_1$ and $p_2$ are in the same boundary $\partial$ of $\surf$
and they are adjacent closed marked points.
\end{proof}

\begin{figure}[h]\centering
\begin{tikzpicture}[xscale=-.8,yscale=.8,arrow/.style={->,>=stealth,thick}]
\clip(0,0) circle (3.8);
\foreach \j in {1,2,3,4,5}
    {\draw[very thick]plot [smooth,tension=1.2] coordinates { (72*\j+105:3.5)(72*\j+90:3)(72*\j+75:3.5)};
    \draw[red,thick](-72*\j+90:3.5)node[black]{$p_\j$} (72*\j+18:3)\ww to[bend left=10] (72*\j+90:3)\ww
        (72*\j+54:2.9)node[rotate=72*\j-36]{$=$}
        (-72*\j+66:2.9)node{$\eta_+^{p_\j}$}
        (-72*\j+114:2.9)node{$\eta_-^{p_\j}$};}
\draw(0,0)node{$\cdots$};
\end{tikzpicture}
\begin{tikzpicture}[xscale=-.8,yscale=.8,arrow/.style={->,>=stealth,thick}]
\clip(0,0) circle (3.8);
\draw[very thick,fill=gray!11] (0,0) circle (2) (3.7,0)node{\huge{=}};
\foreach \j in {1,2,3,4,5}
    {\draw[red,thick]plot [smooth,tension=1.2] coordinates { (72*\j:2) (72*\j+36:2.6) (72*\j+72:2) }
        (72*\j+90+36:1.7) node[black]{\footnotesize{$p_{\j}$}}
        (72*\j:2)\ww(72*\j+72:2)\ww (72*\j+36+144:3)node{$\eta_+^{p_\j}$};
    }
\end{tikzpicture}
\caption{A cycle of saddle connections}\label{fig:circle}
\end{figure}
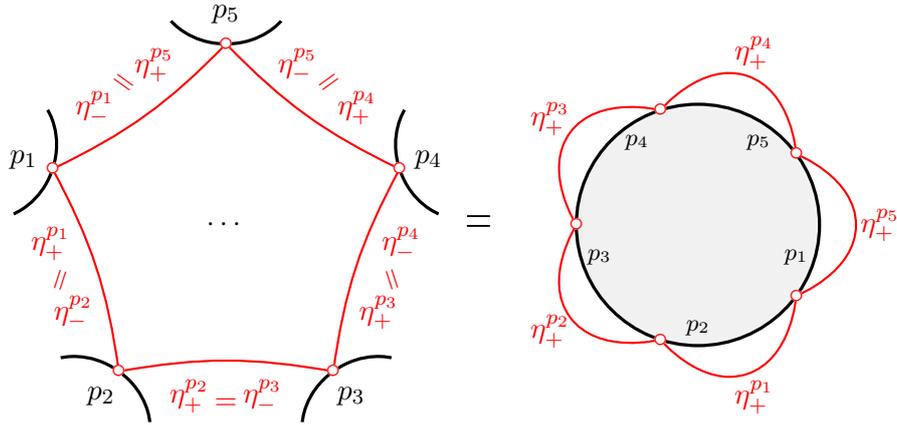
\begin{corollary}\label{cor:cycle}
Suppose that $\gldim\sigma$ is reached at $p_1,p_2,\ldots,p_m$, i.e.
\[
    \gldim\sigma=\angle_{p_j}\core(\sigma)=\phi_\sigma(M_+^{p_j})-\phi_\sigma(M_-^{p_j}),\quad j=1,\ldots,m.
\]
Let $\eta_\pm^{p_j}$ be the arcs corresponding to $M_\pm^{p_j}$.
If $\eta_+^{p_j}=\eta_-^{p_{j+1}}$ for $1\le j\le m$ and $\eta_+^{p_m}=\eta_-^{p_{1}}$,
cf. the left picture in Figure~\ref{fig:circle},
then the arcs $\eta_\pm^{p_j}$ are precisely all the minimal arcs
(cf. Section~\ref{sec:Apq})
at some boundary $\partial$ of $\surf$ in clockwise order (cf. the right picture in Figure~\ref{fig:circle}).
Note that in such a case we will have
\begin{gather}\label{eq:1+w/m}
    \gldim\sigma=1+w_\partial/m_\partial,
\end{gather}
for $m=m_\partial$ the number of marked point on $\partial$
and $w_\partial$ the winding number of $\partial$.
\end{corollary}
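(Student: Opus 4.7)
The plan is to apply Proposition~\ref{pp:min arc} pairwise to force every arc in the cycle onto a single boundary component, and then derive the formula $\gldim\sigma = 1+w_\partial/m_\partial$ by summing the defining phase identity around the cycle using the winding-number relation~\eqref{eq:index sum}.

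First, Proposition~\ref{pp:min arc} applied to each consecutive pair $(p_j, p_{j+1})$ (indices read modulo $m$) shows that each $\alpha_j\colon=\eta_+^{p_j}=\eta_-^{p_{j+1}}$ is a minimal arc; by definition its two endpoints $p_j,p_{j+1}$ are adjacent closed marked points on a common boundary component. Iterating around the cycle forces all the $p_j$ and all the $\alpha_j$ to lie on a single boundary component $\partial$. Since the $\alpha_j$ are distinct minimal arcs visited cyclically through adjacent marked points and the cycle closes in $m$ steps, they exhaust the minimal arcs on $\partial$, and in particular $m=m_\partial$.

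For the formula, fix arbitrary graded lifts $\widetilde{\alpha_j}$ of the $\alpha_j$ and set $\phi_j\colon=\phi_\sigma(X_{\widetilde{\alpha_j}})$, where $X$ is as in Theorem~\ref{thm:IQZ}. Write the distinguished lifts at $p_j$ as $\widetilde{\eta_-^{p_j}}=\widetilde{\alpha_{j-1}}[a_j]$ and $\widetilde{\eta_+^{p_j}}=\widetilde{\alpha_j}[b_j]$ for some integers $a_j,b_j$. The normalisation $i_{p_j}(\widetilde{\eta_-^{p_j}},\widetilde{\eta_+^{p_j}})=0$ together with the shift-compatibility $i_p(\widetilde{c}[a],\widetilde{c}'[b])=i_p(\widetilde{c},\widetilde{c}')+a-b$ gives $i_{p_j}(\widetilde{\alpha_{j-1}},\widetilde{\alpha_j})=b_j-a_j$. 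The hypothesis $\gldim\sigma=\phi_\sigma(M_+^{p_j})-\phi_\sigma(M_-^{p_j})$ reads $\gldim\sigma=(\phi_j+b_j)-(\phi_{j-1}+a_j)$; summing over $j=1,\dots,m$, the $\phi_j$-terms telescope cyclically to zero, whence
\[
m\cdot\gldim\sigma=\sum_{j=1}^{m}(b_j-a_j)=\sum_{j=1}^{m}i_{p_j}(\widetilde{\alpha_{j-1}},\widetilde{\alpha_j})=w_\partial+m_\partial
\]
by~\eqref{eq:index sum} applied to the cycle of minimal arcs on $\partial$. Hence $\gldim\sigma=1+w_\partial/m_\partial$.

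The main technical obstacle is the bookkeeping in the last paragraph: one must verify that the apparent dependence on the arbitrary lifts $\widetilde{\alpha_j}$ and on the simultaneous-shift ambiguity in the pair $(M_-^{p_j},M_+^{p_j})$ truly cancels, and one must align the sign convention of the shift transformation of intersection indices with the orientation used in~\eqref{eq:index sum}. Once these conventions are pinned down, the cyclic telescoping and the winding-number identity conclude the computation immediately.
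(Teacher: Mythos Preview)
Your proof is correct and follows essentially the same route as the paper's: repeated application of Proposition~\ref{pp:min arc} to place all arcs as minimal arcs on one boundary component, followed by a telescoping sum over the cycle combined with the winding-number identity~\eqref{eq:index sum}. The only cosmetic difference is that the paper normalises the shifts so that $M_+^{p_j}=M_-^{p_{j+1}}$ for $1\le j\le m-1$ (making the telescoping immediate, with the entire discrepancy concentrated in a single shift $[w+m]$ at the wrap-around), whereas you keep arbitrary lifts $\widetilde{\alpha_j}$ and track the integers $a_j,b_j$ explicitly; both bookkeeping choices lead to the same computation.
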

\begin{proof}
By repeatedly using Proposition~\ref{pp:min arc} above we end up as the right picture of Figure~\ref{fig:circle}.

For the final calculation, we only need to notice that, by properly shifting $M_\pm^{p_j}$,
we can arrange that
\[
    M_+^{p_j}=M_-^{p_{j+1}},\quad 1\le j\le m-1
\]
and then $M_+^{p_m}=M_-^{p_{1}}[t]$, where $t$ can be calculated as in \eqref{eq:index sum},
that equals $w+m$.
So we have
\[
    m\cdot\gldim\sigma=\sum_{j=1}^m \phi_\sigma(M_+^{p_j})-\phi_\sigma(M_-^{p_j})=w+m
\]
as claimed.
\end{proof}

Note that in the situation of the proposition above, we have $w\ge0$ unless $\surf$ is a disk.
This follows from the fact that $\gldim\sigma\ge1$ unless $\surf$ is a disk (of type A).

Denote by
\begin{gather}
    \VC(\gms)=\{1+w_\partial/m_\partial \mid \partial\subset\partial\surf, w_\partial\ge0\}
\end{gather}
be the set of critical values of $\gldim$.

We can upgrade the second statement of Proposition~\ref{pp:reach1} a little bit.
\begin{proposition}\label{pp:reach1+}
If $\gldim\sigma>1$, then $\sigma$ is $\gldim$-reachable exactly by
the pair $(M_1,M_2)$ of objects corresponding to two edges of an angle of $\core(\xi)$.
Moreover,  $M_1[\ZZ]=M_2[\ZZ]$ can only happen if $\gldim\sigma=1+w_\partial/m_\partial$ for some $\partial$ with
$m_\partial=1$.
If this does not happen, then \eqref{eq:CP} consists of precisely all such pairs $(M_1,M_2)$.
\end{proposition}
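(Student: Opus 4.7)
The plan is to upgrade Proposition~\ref{pp:reach1} by dropping the hypothesis $\gldim\sigma\notin\ZZ$, and then to pin down when a reaching pair can consist of objects in the same shift orbit. First I would establish that $\gldim\sigma=\max_p\angle_p\core(\sigma)$ is attained, by rerunning the argument of Proposition~\ref{pp:reach1}: take values $y_k\in(\gldim\sigma-1/k,\gldim\sigma)\setminus\ZZ$ realised by pairs $(M_1^{(k)},M_2^{(k)})$, apply Corollary~\ref{cor:bb} to rule out the simple-closed-curve alternative (which would force $y_k\in\ZZ$), and use finiteness of $\Y$ to extract a marked point $p^*$ with $\angle_{p^*}\core(\sigma)\ge\gldim\sigma$. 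Since the reverse inequality is automatic, $(M_-^{p^*},M_+^{p^*})$ is a reaching pair corresponding to two edges of an angle of $\core(\xi)$.

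Next I would check that every reaching pair $(M_1,M_2)$ with $M_1[\ZZ]\neq M_2[\ZZ]$ arises this way. Applying Corollary~\ref{cor:bb} to the graded curves $\widetilde\gamma_i$, since $\phi_\sigma(M_2)-\phi_\sigma(M_1)=\gldim\sigma>1$, either they meet only at marked points in $\Y$ or they coincide as a simple closed curve; the latter would make $M_2$ a shift of $M_1$ and is excluded by hypothesis. A shared marked point $p$ then yields $\angle_p\core(\sigma)\ge\gldim\sigma$, which combined with the first step forces equality, so $(M_1,M_2)$ identifies with $(M_-^p,M_+^p)$ up to a common shift.

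Finally I would analyse when $M_1[\ZZ]=M_2[\ZZ]$ is possible. Writing $M_2=M_1[k]$ with $k=\gldim\sigma\in\ZZ_{\ge2}$ and $M_1=X_{\widetilde\gamma}$, the ungraded curve $\gamma$ must have a self-intersection of index $k$ realising the morphism. The ring-domain alternative of Corollary~\ref{cor:bb} is absorbed by the first step, which already produces a marked-point reaching angle; hence $\gamma$ must be a loop at a marked point $p\in\partial$ wrapping $\partial$. The intersection index computation in the proof of Proposition~\ref{pp:reach2} gives $k=1+nw_\partial$ with $n$ the winding number of $\gamma$ around $\partial$, and minimality together with the coincidence $\eta_-^p=\eta_+^p=\gamma$ forces $n=1$. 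That same coincidence forbids any other saddle at $p$ running along $\partial$; by an analysis parallel to Proposition~\ref{pp:min arc}, this rules out further closed marked points on $\partial$, yielding $m_\partial=1$ and hence $\gldim\sigma=1+w_\partial=1+w_\partial/m_\partial$.

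The main obstacle is this last step: rigorously extracting the constraint $m_\partial=1$ from the topological coincidence $\eta_-^p=\eta_+^p$, and confirming that the ring-domain case cannot produce a genuinely new reaching pair beyond those handled by angles. I expect both points to require a careful horizontal-strip decomposition analysis at $p$ in the self-glued loop setting, analogous to but technically more delicate than Proposition~\ref{pp:min arc}.
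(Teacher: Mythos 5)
Your overall strategy is the same as the paper's: extend the max-angle formula of Proposition~\ref{pp:reach1} past the non-integer restriction, use Corollary~\ref{cor:bb} to show that any reaching pair with $M_1[\ZZ]\neq M_2[\ZZ]$ meets only at marked points and hence realizes an angle of $\core(\xi)$, and then reduce the case $M_1[\ZZ]=M_2[\ZZ]$ to a boundary component carrying a single closed marked point. The first two steps are in order (the paper is no more detailed about approximating an integer supremum by realized non-integer values, so I will not press that), and your remark that the ring-domain alternative must be dealt with is fair, though ``absorbed by the first step'' does not by itself exclude a ring-domain pair from being a reaching pair; one should note that such pairs lie in a single shift orbit and are excluded from \eqref{eq:CP} by definition, with the shift-orbit case then handled by the ``Moreover'' clause.

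The genuine gap is the last step, which carries the entire content of the ``Moreover'' clause and which you leave as an expectation rather than a proof: the passage from the coincidence $\eta_-^p=\eta_+^p$ to $m_\partial=1$, and your intermediate claim that ``minimality forces $n=1$'' in $k=1+nw_\partial$, are asserted, and you yourself flag them as requiring a new, ``more delicate'' horizontal-strip analysis. In fact no new analysis is needed. Since distinct arcs correspond to non-isomorphic objects, $M_1[\ZZ]=M_2[\ZZ]$ forces the two edges of the relevant angle to coincide as ungraded arcs, i.e. $\eta_+^{p}=\eta_-^{p}$, and this is precisely the degenerate length-one cycle of Corollary~\ref{cor:cycle}: the proof of Proposition~\ref{pp:min arc} goes through verbatim when $p_1=p_2$ (the broken geodesic formed by the finite-height strips crossing the arc now runs from $p$ back to $p$, and the open marked point on the other side lies on the boundary containing $p$), so the arc is the unique minimal arc of its boundary $\partial$, whence $m_\partial=1$ and \eqref{eq:1+w/m} gives $\gldim\sigma=1+w_\partial/m_\partial$. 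This one observation is how the paper closes the argument in a single line; without it (or an equivalent worked-out substitute for your self-intersection computation), your proof of the ``Moreover'' statement, and hence of the final claim that \eqref{eq:CP} consists exactly of the angle pairs, is incomplete.
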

\begin{proof}
This follows from the fact that different arcs corresponds to different objects.
So $M_1[\ZZ]=M_2[\ZZ]$ implies that two edges of an angle of $\core(\xi)$ coincide.
And Corollary~\ref{cor:cycle} implies that they bound a boundary with exactly one closed marked point.
\end{proof}

\subsection{Main result}
Recall that for $\sigma=(Z,\hh{P})$, we define a set $\CP{\sigma}$ of pairs of objects in \eqref{eq:CP},
whose phase difference of each pair reaches $\gldim\sigma$.
Let $\uCP{\sigma}=\CP{\sigma}/[1]$ be the set of shift orbits of such pairs.

\begin{theorem}\label{thm:main}
If $1\le\gldim\sigma\notin\VC(\gms)$,
then $\Stab_{\CP{\sigma}}\DS$ consists of an real submanifold $\Stab_{\CP{\sigma}}\DS$ of $\Stab\DS$ with
\[
    3\le\dim_\RR \Stab_{\CP{\sigma}}\DS=2n+1-s,
\]
for $s=\#\uCP{\sigma}$.
Moreover, $\Stab_{\CP{\sigma}}\DS$ is open in its closure and,
restricted to which, $\gldim$ is differentiable without critical point.
\end{theorem}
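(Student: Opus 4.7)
The plan is to realize $\Stab_{\CP\sigma}\DS$ locally near $\sigma$ as the zero locus of $s-1$ real-analytic equations inside the $2n$-real-dimensional chart given by Bridgeland's local homeomorphism $\mathcal Z\colon\Stab\DS\to\Hom_\ZZ(K(\DS),\CC)$. Using Proposition~\ref{pp:reach1+}, first enumerate $\uCP\sigma=\{(M_-^{p_j},M_+^{p_j})\}_{j=1}^s$ so that each pair realizes a maximal angle of $\core(\xi)$ at a marked point $p_j$, and introduce on a neighborhood $U$ of $\sigma$ the real-analytic functions
\[
F_j(\sigma')\colon=\phi_{\sigma'}(M_+^{p_j})-\phi_{\sigma'}(M_-^{p_j}),\qquad j=1,\ldots,s,
\]
which are well defined because each $M_\pm^{p_j}$ remains semistable under small deformations. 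After shrinking $U$ so that no other pair of stable objects attains the maximal phase difference (an open condition), the equality $\CP{\sigma'}=\CP\sigma$ on $U$ reduces to the system $F_1=\cdots=F_s$, with common value $\gldim\sigma'$. Thus $\Stab_{\CP\sigma}\DS\cap U$ is the common vanishing locus of $F_2-F_1,\ldots,F_s-F_1$.

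The main obstacle is to verify that these $s-1$ equations cut transversally, equivalently that the covectors $dF_1,\ldots,dF_s$ are $\RR$-linearly independent. In the chart $\mathcal Z$, $dF_j$ at $\sigma$ is the real-linear form $\delta Z\mapsto\tfrac{1}{\pi}\Im(\delta Z(u_j))$, where
\[
u_j\colon=\frac{[M_+^{p_j}]}{Z_\sigma(M_+^{p_j})}-\frac{[M_-^{p_j}]}{Z_\sigma(M_-^{p_j})}\in K(\DS)\otimes_\ZZ\CC,
\]
so independence of the $dF_j$ translates into $\RR$-linear independence of the $u_j$ in $K(\DS)\otimes_\ZZ\CC$. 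The key geometric input is that the hypothesis $\gldim\sigma\notin\VC(\gms)$, combined with Corollary~\ref{cor:cycle}, rules out any closed cyclic chain $\eta_+^{p_j}=\eta_-^{p_{j+1}}$ around a boundary component. The sharing graph whose vertices are $p_1,\ldots,p_s$ and whose edges record coincidences of saddle connections is therefore a forest, giving at least $s+1$ distinct arcs among $\{\widetilde{\eta_\pm^{p_j}}\}$. I would complete this collection to a full formal arc system $\ac$ of $\gms$ as in Example~\ref{ex:Apq}, whose induced simple-minded collection provides a $\ZZ$-basis of $K(\DS)$. Inducting on the leaves of the forest, each pair contributes an arc not shared with any other, supplying an independent coordinate in $u_j$ and hence the required $\RR$-linear independence.

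Once independence is established, the implicit function theorem presents $\Stab_{\CP\sigma}\DS\cap U$ as a real-analytic submanifold of dimension $2n-(s-1)=2n+1-s$, open in its closure because the defining conditions are the equalities $F_i=F_1$ together with open strict inequalities for all other pairs. The lower bound $\dim_\RR\ge 3$ comes from the $\CC$-invariance of $\Stab_{\CP\sigma}\DS$ (both $\gldim$ and $\sigma\mapsto\CP\sigma$ are $\CC$-equivariant), which contributes two dimensions, together with the fact that the common value of the $F_j$'s varies freely by the rank-$s$ surjectivity, giving at least one further transverse direction. Finally, on $\Stab_{\CP\sigma}\DS$ the global dimension $\gldim$ coincides with $F_1$ and is therefore real-analytic, and its differential restricted to the tangent space equals the common value of $dF_1,\ldots,dF_s$, which is nonzero by the rank-$s$ independence, establishing the no-critical-point statement.
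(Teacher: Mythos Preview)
Your proof is correct and follows essentially the same route as the paper's: identify $\CP\sigma$ with maximal-angle pairs of the core via Proposition~\ref{pp:reach1+}, invoke Corollary~\ref{cor:cycle} to forbid cycles among the arcs $\eta_\pm^{p_j}$, complete these arcs to a full formal arc system so that the relevant classes form part of a basis of $K(\DS)$, cut out the submanifold by the $s-1$ independent phase-equality equations, and use $\CC$-invariance for the lower bound. The only cosmetic difference is that the paper works with the polar angles of the coordinate functions $Z_j^+-Z_j^-$ and a directed graph $G^\sigma$ whose vertices are the objects $M_\pm^{p_j}$, whereas you phrase the independence via the differentials $dF_j$ and a dual sharing graph on the marked points $p_j$; the underlying linear-algebra content (no cycle $\Rightarrow$ independent edge-differences) is identical.
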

\begin{proof}
By Proposition~\ref{pp:reach1} and ~\ref{pp:reach2},
we know that $\gldim\sigma$ will be only reached at certain closed marked points $p_1,p_2,\ldots,p_s$,
in the sense that
\begin{equation}\label{eq:eqs}
    \gldim\sigma=\angle_{p_j}\core(\sigma)=\phi_\sigma(M_+^{p_j})-\phi_\sigma(M_-^{p_j}),\quad j=1,\ldots,s.
\end{equation}
Thus, we have
\[
    \CP{\sigma}=\big\{ (M_\pm^{p_j}[m]) \mid m\in\ZZ, 1\le j\le s \big\}.
\]
Up to the $\CC$-action, we can assume that the heart $\h_\sigma$ of $\sigma$ is finite/algebraic
(i.e. a length category with finitely many simples).
Then (cf. \cite[Lem.~5.2]{B1} and \cite{QW})
$\sigma$ is the half-open-half-closed cube $U(\h)\cong\mathbf{H}^n\subset\Stab\DS$
(recall that $\mathbf{H}$ is the upper half plane),
where the coordinates are given by the central charges $\mathbf{Z}_\sigma=\{Z(S_i)\}$ of simples $S_i$ in $\h_\sigma$.

Let $m_\pm\in\ZZ$ such that $M_\pm^{p_j}[m_\pm]$ is in $\h_\sigma$.
Then $Z_j^\pm\colon=Z(M_\pm^{p_j}[m_\pm])$ will be the linear combinations of central charges of simples in $\mathbf{Z}_\sigma$.
Let $G^\sigma$ be the directed graph whose vertices are $\{ M_\pm^{p_j}[m_\pm] \mid 1\le j\le s \}$
and whose arrows are $$\{ M_-^{p_j}[m_-]\to M_+^{p_j}[m_+] \mid 1\le j\le s \}.$$
As $\gldim\sigma\notin\VC(\gms)$, Corollary~\ref{cor:cycle} implies that
there is no cycle in $G^\sigma$.
In fact, any connected component of $G^\sigma$ has the following form
\[
\begin{tikzpicture}[yscale=.5]
\draw(0,0) node (x1) {} (1,.5) node (x2) {} (2,1) node (x3) {}  (4.8,2.4) node (y1){};
\draw(0,0) node (x1) {} (1,-.5) node (x4) {} (2,-1) node (x5) {} (4.8,-2.4) node (y2){};
\foreach \j/\i in {1/2,2/3,1/4,4/5}{
\draw[->,>=stealth](x\j)\ww to (x\i) \ww ;}
\draw[->,>=stealth](x3) to (3,1.5) node[right,rotate=17]{$\cdots$};
\draw[->,>=stealth](x5) to (3,-1.5) node[right,rotate=-17]{$\cdots$};
\draw[->,>=stealth](3.8,1.9) to  (y1)\ww;
\draw[->,>=stealth](3.8,-1.9) to  (y2)\ww;
\draw(3,3.5) node[right,rotate=-17]{$\cdots$};
\draw(3,-3.5) node[right,rotate=17]{$\cdots$};
\draw[->,>=stealth](3.8,2.9) to  (y1)\ww;
\draw[->,>=stealth](3.8,-2.9) to  (y2)\ww;
\end{tikzpicture}.
\]
Moreover, Proposition~\ref{pp:min arc} can be translated to: if some $M_\pm^{p_j}[m_\pm]$
is neither a source nor a sink in $G^\sigma$, then it corresponds to a minimal arc.
A consequence is that the ungraded arcs $\{ \eta_\pm^{p_j} \mid 1\le j\le s \}$
can be completed to a full formal arc system.
Therefore, in the Grotendieck group $$K\DS=\< [S_i] \mid \text{simple $S_i$ in $\h_\sigma$} \>,$$ the classes of $M_\pm^{p_j}[m_\pm]$ form a partial basis.
Since the central charge $Z$ is a group homomorphism,
$\{Z_j^\pm \mid 1\le j\le s\}$ are linear independent in the coordinate $\mathbf{Z}_\sigma$.
Furthermore, the no-cycle condition in $G^\sigma$ implies that
the differences $\{Z_j^+-Z_j^- \mid 1\le j\le s\}$ are also linear independent in the coordinate $\mathbf{Z}_\sigma$.
Thus, by change of coordinates, we can choose
$\{Z_j^+-Z_j^- \mid 1\le j\le s\}$, together with some $Z(S_i)$ (or their linear combinations),
to be the coordinates in the neighbourhood of $U(\sigma)$ of $\sigma$,
where we use polar coordinate system $z=m\cdot e^{\bi \pi \theta}$ for complexes ($m\in\RR_+, \theta\in\RR$)
regarding $\Stab$ as a real manifold.

Next, we claim that there is a neighbourhood $U(\sigma)$ of $\sigma$ in $\Stab\DS$, so that
\begin{gather}\label{eq:=max}
    \gldim\varsigma=\max \{ \angle_{p_j}\core(\varsigma) \mid 1\le j\le s \}
\end{gather}
for any $\varsigma\in U(\sigma)$.
To see this, let
\[
    \epsilon=\gldim\sigma-\max \{ \angle_p\core(\sigma) \mid p\ne p_j, 1\le j\le s \},
\]
so that for any other pair of $\sigma$-semistable indecomposable objects $(M_1',M_2')\notin\CP{\sigma}$
with $\Hom(M_1',M_2')\neq0$ and $M_1'[\ZZ]\neq M_2'[\ZZ]$, we have
\[
    \phi_\sigma(M_2')-\phi_\sigma(M_1') \le \gldim-\epsilon.
\]
Take $U(\sigma)$ be the open ball with center $\sigma$ and radius $\epsilon/4$ and recall that
the distance on $\Stab$ is defined by
\begin{equation}
\label{eq:distance}
d(\sigma,\varsigma):= \sup_{0 \neq E \in \D}\left\{\,
|\phi_{\sigma}^-(E) - \phi_{\varsigma}^-(E)|\,,\,
|\phi_{\sigma}^+(E) - \phi_{\varsigma}^+(E)|\,,\,
\left|\log \frac{m_{\sigma}(E)}{m_{\varsigma}(E)}  \right|
\right\}.
\end{equation}
Then for any $\varsigma=(W,\hh{Q})\in U(\sigma)$,
we have
$$\hh{Q}(\varphi)\subset\hh{P}(\varphi-\epsilon/4, \varphi+\epsilon/4),\quad\forall\varphi\in\RR,$$
where $\sigma=(Z,\hh{P})$.
Then we deduce that for any pair $(M_1',M_2')\notin\CP{\sigma}$ as above,
we will have
\[
    \phi_\varsigma(M_2')-\phi_\varsigma(M_1')\le \phi_\sigma(M_2')-\phi_\sigma(M_1') + \epsilon/2.
\]
Similarly, $\gldim\varsigma>\gldim\sigma-\epsilon/4$ (cf. \cite{IQ1})
which implies the claim.

In a neighbourhood of $\sigma$ in $\Stab\DS$,
consider a submanifold $\hh{V}$ defined by the equations \eqref{eq:eqs},
or equivalently
\[ \theta_1=\theta_2=\cdots=\theta_s \]
for $Z_j^+-Z_j^-=m_j\cdot e^{\bi \pi \theta_j}$.
It is a real submanifold in $\Stab\DS$ with dimension $2n-(s-1)$ and we have
\[
     U_0(\sigma)\subset \Big(\hh{V}\cap\Stab_{\CP{\sigma}}\DS \Big)
\]
for any small neighbourhood $U_0(\sigma)$ of $\sigma$ in $\Stab_{\CP{\sigma}}\DS$.
Let $\varsigma\in \hh{V}\cap\Stab_{\CP{\sigma}}\DS$.
Since the objects $M_\pm^{p_j}$ that appears in $\CP{\sigma}$ is stable,
there is a neighbourhood $U_0(\varsigma)$ of $\varsigma$ in $\Stab\DS$, such that $M_\pm^{p_j}$ remains stable (before it gets destabilized to be semistable).
Then as \eqref{eq:=max} holds and $\CP{\varsigma}$ remains unchange,
we have
\[
    \Big( U_0(\varsigma)\cap \hh{V}  \Big)\subset \Stab_{\CP{\sigma}}\DS,
\]
that implies $\Stab_{\CP{\sigma}}\DS$ is open locally (in $\hh{V}$) and hence open in its closure as claimed.

Furthermore, when restricted to this submanifold, $\gldim$ is in fact given by a single coordinate.
Thus, it is differentiable without critical point.

Finally, we estimate the real dimension of $\Stab_{\CP{\sigma}}\DS$.
Semi-stable objects and their phase differences are invariant under the $\CC$-action.
Thus, $\Stab_{\CP{\sigma}}\DS$ is closed under the $\CC$-action,
which implies that its real dimension is at least two.
Moreover, $\gldim$ is invariant under the $\CC$-action.
Together with the fact that $\gldim$ has no critical point as we showed above,
we deduce that $\dim_\RR \Stab_{\CP{\sigma}}\DS\ge3$.
Another way to see this is via a direct calculation.
Namely, we have (from \eqref{eq:rank})
\[
    2n+1-s=4g+2b+2\aleph-3-s.
\]
If $g\ge2$ or $b\ge2$, we have $\aleph\ge b\ge 1$ and $\aleph\ge s$, which implies $2n+1-s\ge3$.
If $g=0, b=1$, then $n\ge2$ implies $\aleph\ge3$.
We claim that $s\le \aleph-1$. Otherwise
there is a cycle of saddle connections as in Figure~\ref{fig:circle},
such that they correspond to a collection of semistable objects
whose phase difference is $\gldim$.
But the winding number $w$ is $-2$, which implies \eqref{eq:1+w/m} for $m=\aleph$.
This contradicts to $\gldim\ge1$.
So we always have $2n+1-s\ge3$.
\end{proof}

We proceed to analyze what happens on the boundary of subspaces $\Stab_{\CP{\sigma}}\DS$.
\begin{corollary}\label{cor:main}
If $1\le y<x$ such that $(y,x)\cap\VC(\gms)=\emptyset$,
then $\Stab_{\le x}\DS$ contracts to $\Stab_{\le y}\DS$ via a flow induced by the differential of $\gldim$.
\end{corollary}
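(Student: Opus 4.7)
The plan is to build a deformation retraction of $\Stab_{\le x}\DS$ onto $\Stab_{\le y}\DS$ by integrating a downward gradient-like flow stratum by stratum. By Proposition~\ref{pp:reach1}, Proposition~\ref{pp:reach2} and Proposition~\ref{pp:reach1+}, the hypothesis $(y,x)\cap\VC(\gms)=\emptyset$ guarantees that every $\sigma$ with $\gldim\sigma\in(y,x]$ falls into the setting of Theorem~\ref{thm:main}: the set $\uCP{\sigma}$ is a well-defined, locally constant invariant on the stratum $\Stab_{\CP{\sigma}}\DS$, and on that stratum $\gldim$ is given by a single real coordinate with no critical point. So I would first use this discrete invariant to stratify $\gldim^{-1}((y,x])$ into real submanifolds and record the local coordinate $\theta$ (with $Z_j^+ - Z_j^- = m_j e^{\bi\pi\theta}$ in the notation of Theorem~\ref{thm:main}) along which $\gldim$ decreases linearly.

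Next, I would define the flow. On each open stratum, the negative $\theta$-direction, normalized so that $\gldim$ decreases at unit rate, gives a canonical smooth vector field $V$ whose integral curves strictly decrease $\gldim$. To extend across strata, I would exploit the fact that, locally near any $\sigma$ with $\gldim\sigma>1$, the function $\gldim$ equals $\max_{p} \angle_p\core(\varsigma)$ where $p$ ranges over finitely many closed marked points (from the neighbourhood $U(\sigma)$ constructed in the proof of Theorem~\ref{thm:main} via \eqref{eq:=max}). Thus $\gldim$ is a max of finitely many smooth functions, and its steepest-descent flow in the sense of nonsmooth analysis is well defined: when a trajectory in $\Stab_{\CP{\sigma}}\DS$ hits a boundary face where a new angle $\angle_{p}\core$ joins the maximum, the flow continues inside the smaller stratum where $\CP{\sigma}$ has been enlarged, choosing the unique direction that decreases all the now-equalized coordinates simultaneously. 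Since no critical value lies in $(y,x)$, the set $\CP{\varsigma}$ can only jump to enlargements (never become empty), and the flow remains defined.

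Having constructed such a flow $\Phi_t$, the contraction is immediate: set
\[
H(\sigma,t) \;=\; \Phi_{\,t\cdot\max(\gldim\sigma-y,\,0)}(\sigma),\qquad t\in[0,1],
\]
so that $H(\cdot,0)=\id$, $H(\sigma,t)=\sigma$ on $\Stab_{\le y}\DS$, and $H(\cdot,1)$ lands in $\Stab_{\le y}\DS$ because unit-speed descent of $\gldim$ is possible over the whole critical-value-free interval $(y,x]$. The dimension estimate $\dim_\RR\Stab_{\CP{\sigma}}\DS\ge 3$ from Theorem~\ref{thm:main} ensures that the $\CC$-orbit direction remains transverse to the flow, so nothing degenerates along the way. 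The main obstacle I anticipate is verifying continuity of $\Phi_t$ across stratum transitions; I would address this by recognizing $-V$ as the subdifferential flow of the concave-max function $-\gldim$ in the coordinates $\mathcal{Z}$ of \eqref{eq:Z}, for which continuous dependence on initial data is classical, and by using the local finiteness of semistable-object classes (together with the openness statement of Theorem~\ref{thm:main}) to rule out accumulation of stratum crossings in finite time.
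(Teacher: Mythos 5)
Your proposal follows essentially the same route as the paper's own (very terse) proof: stratify via the sets $\CP{\sigma}$, use Theorem~\ref{thm:main} to flow along the differential of $\gldim$ inside each stratum, and continue into the adjacent stratum $\Stab_{\CP{\varsigma}}\DS$ when the trajectory hits a boundary, exactly as in the paper's proof and the remark following it. Your write-up simply supplies details (the explicit homotopy, the local description of $\gldim$ as a finite max, continuity and non-accumulation of stratum crossings) that the paper leaves implicit, so it is a faithful, more detailed version of the same argument.
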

\begin{proof}
By Theorem~\ref{thm:main}, if $\sigma$ is in $\Stab_{\CP{\sigma}}\DS$,
then $\diff \gldim$ gives a contractible flow.
When the flow hits the boundary of $\Stab_{\CP{\sigma}}\DS$,
it necessarily enters another $\Stab_{\CP{\varsigma}}\DS$.
Then the corollary follows.
\end{proof}

\begin{remark}
When following the contractible flow in $\Stab_{\CP{\sigma}}\DS$ and hits its boundary and then enters another $\Stab_{\CP{\varsigma}}\DS$ at $\varsigma$, typical scenarios are
\begin{itemize}
  \item $\CP{\sigma}$ gets bigger that $\gldim$ is achieved at another angle of the core or;
  \item some of the $M_\pm^{p_j}$ get destabilized to be semistable and the corresponding pairs in $\CP{\sigma}$
  are replaced.
\end{itemize}
In the next section, we will examine type A and (graded) affine type A cases in more details to show such phenomenons.
\end{remark}

Another immediate consequence is the following.
\begin{corollary}\label{cor:gd}
If $\gms$ is not a disk, then $\gd\DS$ is in $\VC(\gms)$.
\end{corollary}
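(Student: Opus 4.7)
The plan is to argue by contradiction, exploiting Corollary~\ref{cor:main} together with the observation that $\VC(\gms)$ is a discrete subset of $\RR$. Suppose $\gd\DS = x_0 \notin \VC(\gms)$. Since $\gms$ is not a disk, the remark following Corollary~\ref{cor:cycle} (combined with Theorem~\ref{thm:KOT}, which excludes $\gd<1$ outside Dynkin type) gives $x_0 \ge 1$. Because $\VC(\gms) \subset \{1 + w/m_\partial \mid w \in \ZZ_{\ge0}\}$ is a countable, locally finite subset of $[1,\infty)$, I can pick $\epsilon > 0$ such that $(x_0 - \epsilon, x_0 + \epsilon) \cap \VC(\gms) = \emptyset$.

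First I would dispose of the generic case $x_0 > 1$. Shrinking $\epsilon$ so that $x_0 - \epsilon/2 \ge 1$, set $y := x_0 - \epsilon/2$ and $x := x_0 + \epsilon/2$. Then $1 \le y < x$ and $(y,x) \cap \VC(\gms) = \emptyset$, so Corollary~\ref{cor:main} delivers a deformation retraction of $\Stab_{\le x}\DS$ onto $\Stab_{\le y}\DS$ along the flow of $\diff\gldim$. But $y < \gd\DS$ forces $\Stab_{\le y}\DS = \emptyset$, while $x > \gd\DS = \inf\gldim$ guarantees $\Stab_{\le x}\DS$ is non-empty; a flow on a non-empty space cannot terminate in the empty set, contradiction.

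The borderline case $x_0 = 1$ (so $1 \notin \VC(\gms)$, i.e.\ every winding number is strictly positive) needs a separate treatment, and I expect it to be the main obstacle because the contraction cannot lower $\gldim$ past the universal bound $1$. The workaround is to take $y = 1$ and $x = 1 + \epsilon/2$ in Corollary~\ref{cor:main}; the non-empty space $\Stab_{\le x}\DS$ then contracts onto $\Stab_{\le 1}\DS$, which must therefore be non-empty. Any $\sigma$ in it satisfies $\gldim\sigma = 1$ by the non-disk lower bound. I would then rerun the loop construction from the proof of Proposition~\ref{pp:reach2}: for a boundary component $\partial_0$ with winding number $w_{\partial_0}$ and a marked point $p \in \partial_0$, the loop $\gamma_1$ based at $p$ encircling $\partial_0$ once gives an indecomposable $M_1$ with $\Hom(M_1, M_1[1 + w_{\partial_0}]) \ne 0$; semistability forces $w_{\partial_0} \ge 0$, and $\gldim\sigma = 1$ forces $w_{\partial_0} \le 0$, hence $w_{\partial_0} = 0$. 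This yields $1 = 1 + 0/m_{\partial_0} \in \VC(\gms)$, contradicting the standing assumption. Combining the two cases, $\gd\DS$ must lie in $\VC(\gms)$.

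Along the way the only non-routine check is that the deformation retract in Corollary~\ref{cor:main} is indeed realized by a flow, so that non-emptiness of the source passes to the target; this is implicit in the statement ``via a flow induced by the differential of $\gldim$'', which I would invoke rather than re-derive.
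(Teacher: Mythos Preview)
Your argument is correct and is precisely the intended ``immediate consequence'' of Corollary~\ref{cor:main} that the paper leaves unspoken: a contradiction obtained by contracting a nonempty $\Stab_{\le x}$ onto an empty (or, in the borderline case, onto $\Stab_{\le 1}$ and then invoking the loop argument of Proposition~\ref{pp:reach2}). One cosmetic remark: $\VC(\gms)$ is in fact a \emph{finite} set (at most one value per boundary component), not merely locally finite, so the choice of $\epsilon$ is even more straightforward than you indicate.
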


\section{Examples}\label{sec:ex}
Denote by
$
    \PStab(-)=\Stab(-)/\CC
$
the spaces of projective stability conditions,
where $\gldim$ is well-defined.

\subsection{Rank 2 cases and deformation}
\begin{example}\label{ex:A2}
Consider the case when $\surf$ is a disk with three marked points,
where $$\DS\cong\D^b(\kk A_2)$$ is the bounded derived category of an $A_2$ quiver $1\to2$.
Let $Z_1=Z(S_1)=Z(P_1),Z_2=Z(S_2)$ and $Z_3=Z(P_2[1])$.
Then $\PStab\DS$ decomposes into:
\begin{itemize}
\item
    three $1$-$\dim_\RR$ subspaces (blue lines in Figure~\ref{fig:logo}),
    which correspond to equations
    \[ |Z_i|=|Z_j|,\quad \{i,j\}\in\{1,2,3\} ;\]
\item
    three $2$-$\dim_\RR$ subspaces green areas in Figure~\ref{fig:logo}
    that are bounded by the $1$ subspaces above;
\item
    one critical point $\CC\cdot\sigma_G$ with $|Z_1|=|Z_2|=|Z_3|$,
    which is the solution in Theorem~\ref{thm:Q}.
\end{itemize}
The contractible flow is shown in Figure~\ref{fig:logo};
\begin{figure}[h]\centering
\begin{tikzpicture}[scale=.6] \clip(0,0) circle (4.2);
\draw[white,fill=Emerald!10](0,0) circle (4.2);
\foreach \k in {0,1,2}
{ \path (120*\k:5) coordinate (v2)
          (120*\k+60:5) coordinate (v1)
          (120*\k+120:5) coordinate (v3)
          (0,0) coordinate (v4);
  \foreach \j in {.05,.15,...,.95}
    {
      \draw[Emerald,->-=.5,>=stealth] ($(v3)!\j!(v1)$)to($(v3)!\j!(v4)$);
      \draw[Emerald,->-=.5,>=stealth] ($(v2)!\j!(v1)$)to($(v2)!\j!(v4)$);
    }
  \draw[very thick,blue,->-=.5,>=stealth](v2)to(v4);
}
\draw[very thick,blue](5,0)to(0,0);
\draw[red,fill=red](0,0)circle(.1);
\end{tikzpicture}
\caption{Contractible flow in $A_2$ case}\label{fig:logo}
\end{figure}
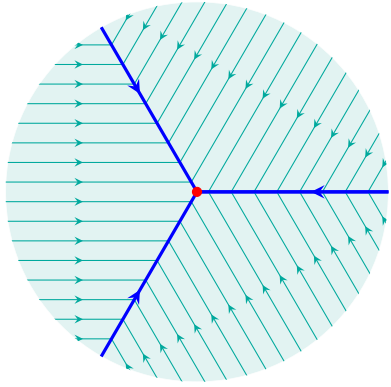
\end{example}

\begin{example}\label{ex:K2}
Consider the case when $\surf$ is an annulus with one marked point on each boundary,
where $$\DS\cong\D_\infty(\kk K_2)\cong\D^b(\coh\bP^1)$$ is the bounded derived category of the Kronecker quiver $K_2\colon 1\rightrightarrows2$ or the one of coherent sheaves on $\coh\bP^1$.
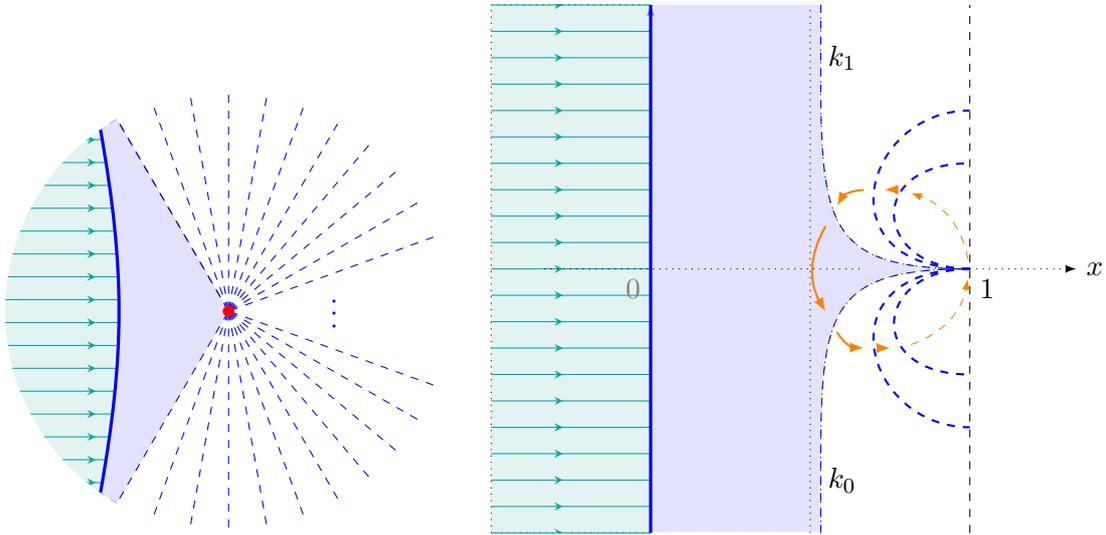
\begin{figure}[h]\centering
\begin{tikzpicture}[scale=.7] \clip(0,0) circle (4.2);
\draw[white,fill=Emerald!10](0,0)to(120:7)to[bend right=60](240:7);
\foreach \k in {1}
{ \path (120*\k:5) coordinate (v2)
          (120*\k+60:5) coordinate (v1)
          (120*\k+120:5) coordinate (v3)
          (0,0) coordinate (v4);
  \foreach \j in {.05,.15,...,.95}
    {
      \draw[Emerald,->-=.5,>=stealth] ($(v3)!\j!(v1)$)to($(v3)!\j!(v4)$);
      \draw[Emerald,->-=.5,>=stealth] ($(v2)!\j!(v1)$)to($(v2)!\j!(v4)$);
    }
}
\draw[blue,dashed,fill=blue!11](0,0)to(120:4.21)to(125:4.21)to[bend left=10](235:4.21)to(240:4.21);
\draw[dashed,blue](240:5)to(0,0);
\draw[dashed,blue](120:5)to(0,0);
\draw[blue,very thick](125:4.2)to[bend left=10](235:4.2);
\draw[thick,blue](2,0)node[rotate=90]{$\cdots$};
\foreach \j in {-110,-100,...,-20}{\draw[dashed,blue](\j:5)to(0,0);}
\foreach \j in {110,100,...,20}{\draw[dashed,blue](\j:5)to(0,0);}
\draw[red,fill=red](0,0)circle(.1);
\end{tikzpicture}
\quad
\begin{tikzpicture}[scale=.7]\clip(-3,-5) rectangle (8.5,5);

\path (0,0) coordinate (O);
\path (4,0) coordinate (A);
\path (3+0.2,5) coordinate (m3);
\path (3+0.2,-5) coordinate (m4);
\draw[fill=blue!11,dotted]   (m3)
    .. controls +(-90:4) and +(180:3) .. (6,0)
    .. controls +(180:3) and +(90:4) .. (m4)
    to [out=180,in=0] (-3,-5)
    to [out=90,in=-90] (-3,5);
\draw[white, thick]   (m4) to (-3,-5) to (-3,5);

\draw[fill=Emerald!10,dotted]   (-3,-5)rectangle(0,5);

\foreach \j in {-5,-4.5,...,5}
{\draw[Emerald,->-=.45,>=stealth] (-3,\j)to(0,\j);}

\draw[blue,dashed] (m3) .. controls +(-90:4) and +(180:3) .. (6,0);
\draw[blue,dashed] (m4) .. controls +(90:4) and +(180:3) .. (6,0);

\draw[dotted,->,>=latex] (6,0) -- (8,0) node[right]{$x$};
\draw[gray,dashed,->,>=latex] (0,-5) -- (0,5) node[above]{$y$};
\draw[blue, very thick] (0,-5) -- (0,5);
\draw[gray] (0,0) node[below left]{$0$};
\path[dotted] (-2,0) edge (4,0);
\path[dashed] (6,-5) edge (6,5);
\path (3,-5)[dotted] edge (3,5);

\path (3.6,4) node {$k_1$};\path (3.6,-4) node {$k_0$};
\draw[blue,dashed,thick] (6,0) .. controls +(180:2.8) and +(180:2) .. (6,3);
\draw[blue,dashed,thick] (6,0) .. controls +(180:2.8) and +(180:2) .. (6,-3);
\draw[blue,dashed,thick] (6,0) .. controls +(180:1.8) and +(180:2) .. (6,2);
\draw[blue,dashed,thick] (6,0) .. controls +(180:1.8) and +(180:2) .. (6,-2);
\path (6,0) node[below right] {$1$};
\path (6,0) node (a1) {\tiny{}};
\path (4.7,1.5) node (a2) {\tiny};
\path (4.2,1.5) node (a3) {\tiny};
\path (3.4,1) node (a4) {\tiny};
\path (4.7,-1.5) node (a7) {\tiny};
\path (4.2,-1.5) node (a6) {\tiny};
\path (3.4,-1) node (a5) {\tiny};
\path[->,orange, bend right, dashed] (a1) edge (a2);
\path[->,orange, thick] (a2) edge (a3);
\path[->,orange, bend right, thick] (a3) edge (a4);
\path[->,orange, bend right, thick] (a4) edge (a5);
\path[->,orange, bend right, thick] (a5) edge (a6);
\path[->,orange, thick] (a6) edge (a7);
\path[->,orange, bend right, dashed] (a7) edge (a1);
\end{tikzpicture}
\caption{Contractible flow in the Kronecker case}\label{fig:logo2}
\end{figure}
Similarly to the $A_2$ case,
$\PStab\DS$ decomposes into (cf. \cite{O} and \cite[\S~7.5.2]{Q2}):
\begin{itemize}
\item
    a core $2$-$\dim_\RR$ subspaces $\PStab_{=1}\D^b(\coh\bP^1)$;
\item
    $\ZZ$ many copies of $1$-$\dim_\RR$ subspaces (dashed blue lines in Figure~\ref{fig:logo2})
    in $\PStab_{=1}\D^b(\coh\bP^1)$,
    which correspond to equations
    \[ |Z(\hh{O}(j-1)[1])|=|Z(\hh{O}(j))|,\quad j\in\ZZ. \]
    They are related by $-\otimes \hh{O}(1)\in\Aut\D^b(\coh\bP^1)$;
\item
    $\ZZ$ many copies of $2$-$\dim_\RR$ subspaces green areas in Figure~\ref{fig:logo},
    each of which is a connected component of
    \begin{gather}\label{eq:zz}
        \PStab_{>1}\D^b(\coh\bP^1).
    \end{gather}
    They are also related by $-\otimes \hh{O}(1)\in\Aut\D^b(\coh\bP^1)$.
\end{itemize}
The contractible flow is shown in Figure~\ref{fig:logo2} (exists in \eqref{eq:zz});
\end{example}
\subsection{Disk case revisit}
Let $\surf$ be a disk with $n+1$ marked points, i.e.
$g=0, b=1, \aleph=n+1$ and $w=-2$.
Then $\DS\cong\DA$ for an $A_n$ quiver.
Recall that $\Stab_{<1}\DA$, consists of all totally stable stability conditions in this case.
Its projective version is isomorphic to the space of convex $(n+1)$-gon (Proposition~\ref{pp:A}).

In this case, $\VC(\gms)=\{1,(n-1)/(n+1)\}$,
where $(n-1)/(n+1)$ is in fact $\gd\DA$.
Then Theorem~\ref{thm:main} can be rephrased as following corollary.

\begin{corollary}\label{cor:A}
$\Stab_{<y}\DA$ contracts to $\Stab_{<x}\DA$ for any $1\le x\le y$.
In particular, $\Stab\DA$ contracts to $\Stab_{<1}\DA$,
\end{corollary}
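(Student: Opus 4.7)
The plan is to apply Corollary~\ref{cor:main} and then refine the contraction from closed to open level sets of $\gldim$. In the disk case $\surf$ has a single boundary with winding number $w=-2<0$, so $\VC(\gms)\cap(1,\infty)=\emptyset$ by definition of $\VC$; indeed $\VC(\gms)$ is empty in the strict sense. Consequently, for every pair $1\le x'<y'$ the hypothesis $(x',y')\cap\VC(\gms)=\emptyset$ of Corollary~\ref{cor:main} is automatic, yielding a deformation retract of $\Stab_{\le y'}\DA$ onto $\Stab_{\le x'}\DA$ via the gradient flow of $-\gldim$.

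To upgrade from closed to open level sets, fix $1\le x<y$ (the case $x=y$ being tautological). The flow $\Phi_t$ of $-\nabla\gldim$ supplied by Theorem~\ref{thm:main} is globally defined on $\Stab\DA$ and strictly decreases $\gldim$ wherever $\gldim\ge 1$. Hence every orbit starting at a stability condition $\sigma$ with $x\le\gldim\sigma<y$ enters $\Stab_{<x}\DA$ in finite time. After a time-reparametrization making this crossing happen uniformly at $t=1$, and keeping orbits already in $\Stab_{<x}\DA$ fixed, one obtains a strong deformation retract of $\Stab_{<y}\DA$ onto $\Stab_{<x}\DA$.

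The ``in particular'' statement follows by taking $x=1$ and observing that, since $\DA$ is of finite type, every $\sigma$ satisfies $\gldim\sigma<\infty$ so that $\Stab\DA=\bigcup_{y>1}\Stab_{<y}\DA$. The same reparametrized flow then yields a continuous deformation retract $\Stab\DA\to\Stab_{<1}\DA$, recovering in particular the contractibility of $\Stab\DA$ combined with Proposition~\ref{pp:A}.

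The point requiring most care is the continuous matching of the flow across the boundaries between adjacent cells $\Stab_{\CP{\sigma}}\DA$ and $\Stab_{\CP{\varsigma}}\DA$, on each of which $\gldim$ is expressed by different linear combinations of central charges. Corollary~\ref{cor:main} already treats this cell transition via continuity of $\gldim$ and the absence of critical values in the traversed range, so the remaining work is the routine time-rescaling making the contraction complete at $t=1$.
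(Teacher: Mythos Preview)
Your argument is correct and follows the same line as the paper's: use that there are no critical values of $\gldim$ above $1$ so that Theorem~\ref{thm:main}/Corollary~\ref{cor:main} produce a decreasing flow, then note that the flow continues past the level set $\gldim=1$ so one lands in $\Stab_{<1}\DA$; for the last step the paper phrases this as ``any $\sigma$ with $\gldim\sigma=1$ lies in some open cell $\Stab_{\CP{\sigma}}\DA$ and hence can be further contracted,'' which is exactly your assertion that the flow strictly decreases $\gldim$ at level $1$. You are in fact more explicit than the paper about passing from the closed sublevel sets of Corollary~\ref{cor:main} to the open ones in the statement, and your reading $\VC(\gms)=\emptyset$ (from the condition $w_\partial\ge0$ in the definition, whereas here $w=-2$) is what is actually needed for Theorem~\ref{thm:main} to apply at $\gldim=1$, even though the paper informally records $\VC(\gms)=\{1,(n-1)/(n+1)\}$.
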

\begin{proof}
We only need to show the second statement.
On one hand, we have $$\Stab\DA=\lim_{y\to\infty}\Stab_{<y}\DA.$$
On the other hand, any $\sigma\in\Stab\DA$ with $\gldim\sigma=1$ is in some (open) real submanifold
$\Stab_{\CP{\sigma}}\DA$ and thus can be further contracted.
Thus, the statement follows.
\end{proof}

\subsection{Annulus case revisit}\label{sec:Apq}
We keep the notation in Example~\ref{ex:Apq}, i.e.
we have $\DS\cong\Dpq$ and $\num(\gms)=((m,r),(w,-w))$ for $m,r\in\ZZ_+$ and $w\in\ZZ_{\ge0}$.
Then $\VC(\gms)=\{1,1+w/m\}$.

\begin{theorem}\label{thm:Apq}
$\Stab_{<y}\Dpq$ contracts to $\Stab_{<x}\Dpq$ for any $1+w/m\le x\le y$.
Moreover,
$\gd\Dpq=1+w/m$ and in particular $\Stab\Dpq$ contracts to $\Stab_{1+w/m}\Dpq$.
\end{theorem}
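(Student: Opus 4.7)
The strategy is to combine the explicit identification of $\VC(\gms)$ in the annulus case with the general contractibility machinery of Corollary~\ref{cor:main}, and then to pin down $\gd\Dpq$ from above by Lemma~\ref{lem:gd} and from below by Corollary~\ref{cor:gd}. First I would unpack $\VC(\gms)$ for the annulus with $\num(\gms)=((m,r),(w,-w))$: when $w>0$ only $\partial_m$ contributes (since $\partial_r$ has negative winding number $-w$), giving $\VC(\gms)=\{1+w/m\}$; when $w=0$ both boundaries give the single value $1=1+w/m$. In particular, the open ray $(1+w/m,+\infty)$ is always disjoint from $\VC(\gms)$.

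With this in hand, the first assertion of the theorem is immediate from Corollary~\ref{cor:main}: for any $1+w/m\le x\le y$ the interval $(x,y)$ misses $\VC(\gms)$, so the flow induced by $\diff\gldim$ contracts $\Stab_{\le y}\Dpq$ onto $\Stab_{\le x}\Dpq$. (The strict vs.\ non-strict subscripts in the statement match up because $\gldim$ is continuous and the critical stratum is a single level.)

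For the value $\gd\Dpq=1+w/m$, the inequality $\gd\Dpq\le 1+w/m$ is exactly Lemma~\ref{lem:gd}. For the reverse, note that $\gms$ is an annulus (hence not a disk), so Corollary~\ref{cor:gd} forces $\gd\Dpq\in\VC(\gms)$. When $w>0$, the set $\VC(\gms)=\{1+w/m\}$ is a singleton and we conclude directly. When $w=0$, $\VC(\gms)=\{1\}$ and again $1+w/m=1$. As a sanity check, one can also observe that $\widetilde{A_{m,r}}$ is non-Dynkin, so Theorem~\ref{thm:KOT} independently excludes $\gd\Dpq<1$.

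Finally, the \emph{in particular} statement follows by letting $y\to\infty$ in the previous contraction: since $\gd\Dpq=1+w/m$, every stability condition has $\gldim\ge 1+w/m$, hence $\Stab_{\le 1+w/m}\Dpq=\Stab_{1+w/m}\Dpq$, and $\Stab\Dpq=\bigcup_{y}\Stab_{\le y}\Dpq$ deformation retracts onto this minimal stratum. The only non-trivial point in the whole argument is the exclusion of intermediate critical values on the way down, which is precisely the content of Corollary~\ref{cor:gd} applied to our explicit $\VC(\gms)$; everything else is bookkeeping on top of the machinery established in Sections 3--4.
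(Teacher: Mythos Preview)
Your proof is correct and follows essentially the same path as the paper: contractibility comes from Corollary~\ref{cor:main}, the upper bound from Lemma~\ref{lem:gd}, and for the lower bound you invoke Corollary~\ref{cor:gd} together with your (correct) computation $\VC(\gms)=\{1+w/m\}$ for $w>0$, whereas the paper re-derives this special case by contradiction --- contracting a hypothetical $\sigma$ with $\gldim\sigma<1+w/m$ down to level $\le 1$ via Theorem~\ref{thm:main}/Corollary~\ref{cor:main} and then using Corollary~\ref{cor:reach2} (really the analysis in Proposition~\ref{pp:reach2}) to force $w=0$. (Note the paper records $\VC(\gms)=\{1,1+w/m\}$; by the definition only boundaries with $w_\partial\ge 0$ contribute, so your reading is the literal one, and the paper's direct contradiction argument is what rules out the value $1$ in any case.)
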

\begin{figure}[h]\centering
\begin{tikzpicture}[xscale=.7,yscale=.7,arrow/.style={->,>=stealth,thick}]
\begin{scope}[shift={(0,0)}]
\draw[Green,very thick,->-=.55,>=stealth]
        (-.6,.95)to[bend left=75](.6,.95);
\draw[thick,fill=gray!11] (0,0) circle (1);\draw[thick](0,0) circle (3);
\draw[red, very thick]
    (0,1).. controls +(0:3) and +(0:1) ..(0,-1.4)
    (0,1).. controls +(180:3) and +(180:1) ..(0,-1.4)
    (0,-1.5)node[below]{$\eta$};
\foreach \j in {1,2,3} {\draw[red,thick,dashed] (120*\j+90:3)\ww (120*\j+90:1)\ww ;}
\end{scope}
\begin{scope}[shift={(7,0)}]

\draw[thick,fill=gray!11] (0,0) circle (1);\draw[thick](0,0) circle (3);

\draw[red, very thick]
    (0,1).. controls +(0:3) and +(10:1) ..(0,-1.4)
    .. controls +(-10:3) and +(0:3) ..(0,1.8)
    (0,1).. controls +(180:3) and +(170:1) ..(0,-1.4)
    .. controls +(-170:3) and +(180:3) ..(0,1.8)

    (0,-1.5)node[below]{$\gamma$};
\foreach \j in {1,2,3} {\draw[red,thick,dashed] (120*\j+90:3)\ww (120*\j+90:1)\ww ;}
\end{scope}
\end{tikzpicture}
\caption{An L-arc annulus case}\label{fig:ll}
\end{figure}
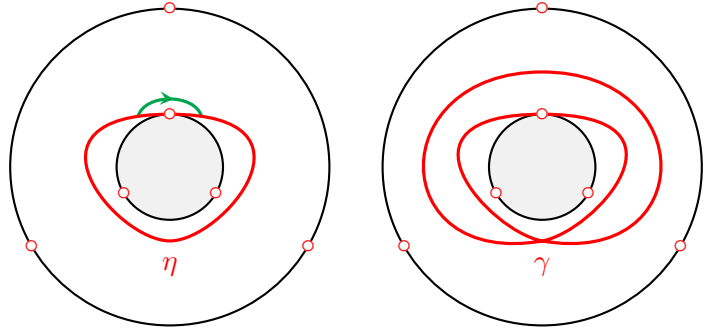
\begin{proof}
By Theorem~\ref{thm:main}, we only need to prove $\gd\Dpq=1+w/m$.
The $w=0$ case is contained in \cite[Thm.~5.2]{Q1}.

Consider the case when $w>0$ and suppose that $\gd\Dpq<1+w/m$.
We have $\Stab_{<1+w/m}\Dpq$ contracts to $\Stab_{\le1}\Dpq$ by Theorem~\ref{thm:main},
which implies that $\gd\Dpq\le1$.
By Corollary~\ref{cor:reach2},
this can only happen for $w=0$, which is a contradiction.
Thus, $\gd\Dpq\ge1+w/m$, which forces $\gd\Dpq=1+w/m$ by Lemma~\ref{lem:gd}.
\end{proof}


\end{document}